\definecolor{citegreen}{rgb}{0,0.6,0}
\definecolor{refred}{rgb}{0.8,0,0}
\theoremstyle{plain}
\newtheorem{thm}{Theorem}[section]
\newtheorem{lemma}[thm]{Lemma}
\newtheorem{proposition}[thm]{Proposition}
\newtheorem{cor}[thm]{Corollary}
\newtheorem{theorem}[thm]{Theorem}
\newtheorem*{teorema}{Theorem} 
\newtheorem{ackn}{Acknowledgement\!\!}
\theoremstyle{remark}
\newtheorem{remark}[thm]{Remark}
\theoremstyle{definition}
\newtheorem{definition}[thm]{Definition}
\numberwithin{equation}{section}
\newcommand{\R}{\mathbb{R}}
\newcommand{\N}{\mathbb{N}}
\def\Z{\mathbb Z}
\newcommand{\SSS}{\mathbb S}
\newcommand{\RRR}{{\mathrm R}}
\newcommand{\Ric}{{\mathrm {Ric}}}
\newcommand{\pa}{\partial}
\newcommand{\D}{{\rm D}}
\newcommand{\HHH}{\mathrm{H}} 
\newcommand{\na}{\nabla}
\newcommand{\medint}{-\kern -,375cm\int}
\newcommand{\medintinrigo}{-\kern -,315cm\int}
\newcommand{\beq}{\begin{equation}}
\newcommand{\eeq}{\end{equation}}
\def\ringg#1{\accentset{\circ}{#1}}
\begin{document}
\title[]{ADM mass, area and capacity in asymptotically flat $3$--manifolds with nonnegative scalar curvature}

\author[F.~Oronzio]{Francesca Oronzio}
\address{F.~Oronzio, Universit\`a degli Studi di Napoli Federico II, Italy}
\email{francesca.oronzio@unina.it}

\begin{abstract}
We show an improvement of Bray sharp mass--capacity
inequality and Bray--Miao sharp upper bound of the capacity of the boundary in terms of its area, for three--dimensional, complete, one--ended asymptotically flat manifolds
with compact, connected boundary and with nonnegative scalar
curvature, under appropriate assumptions on the topology 
and on the mean curvature of the boundary. Our arguments relies on two
monotonicity formulas holding along level sets of a suitable harmonic
potential, associated to the boundary of the manifold. This work is an expansion of the results contained in the PhD thesis of the author. 
\end{abstract}

\maketitle

\tableofcontents

\section{Introduction}

Monotonicity formulas play an important role in geometric analysis, 
some well--known examples are given by the monotonicity formula for
minimal submanifolds and the classical Bishop--Gromov volume
comparison theorem in the context of comparison geometry, while
Huisken monotonicity formula for the mean curvature
flow~\cite{Hui_1990}, Perelman entropy formula for the Ricci
flow~\cite{Per}, or Geroch monotonicity of the Hawking mass along
the inverse mean curvature flow~\cite{ger73,HI} are extremely 
relevant for geometric flows.
Analogously to the fact that Perelman monotonicity is closely
related to the sharp gradient estimate for the heat kernel of
Li and Yau, in a series of works~\cite{Colding_Acta, Col_Min_2,Col_Min_3}, 
Colding and Minicozzi obtained some monotonicity
formulas along the level sets of the minimal positive Green function in nonparabolic Riemannian manifolds
with nonnegative Ricci curvature, as a consequence of a new sharp
gradient estimate for such function and used
them to prove the uniqueness of tangent cones for Einstein
manifolds~\cite{Col_Min_4}. Afterwards, the same monotone quantities were used to obtain
new Willmore--type geometrical inequalities in~\cite{Ago_Fog_Maz_1}
(see also~\cite{Ago_Maz_CV} for the Euclidean setting) and 
generalized to prove an optimal version of the Minkowski inequality
in~\cite{BFM} (see also~\cite{Ago_Fog_Maz_2} for the Euclidean
setting) for nonparabolic Riemannian manifolds with nonnegative Ricci
curvature. 
With the same idea some new monotonicity formulas (even with
non--harmonic functions) were then found to study static and sub--static
metrics in general relativity~\cite{Virginia1, Bor_Chr_Maz, Bor_Maz_1,Bor_Maz_2, AMO}.
Recently, this level set approach with harmonic functions applied to certain nonparabolic Riemannian $3$--manifolds
with nonnegative scalar curvature have produced several results, we mention 
the sharp comparisons about the rate of decay of an energy--like quantity and the area of the
level sets of the minimal positive Green function, obtained by Munteanu and Wang
in~\cite{MuntWang} which, as an application, allowed Chodosh and
Li~\cite{Chodosh_Li} to prove a conjecture of Schoen on the
stable minimal hypersurfaces in the Euclidean space $\R^4$ and some 
new proofs of the well--known positive mass theorem~\cite{Ago_Maz_Oro_2,bray3}. 
Then, considering linearly growing harmonic functions (similarly to~\cite{bray3}, which was influenced by a pioneering work of Stern~\cite{Stern}), 
some asymptotically flat versions of the {\em spacetime}
positive mass theorem have been proven in~\cite{Hir_Kaz_Khu, Bra_Hir_Kaz_Khu_Zha_2021}, while the
monotonicity formula in~\cite{Ago_Maz_Oro_2} has been extended
in~\cite{AMMO} to the nonlinear potential theoretic setting, replacing
the harmonic functions by $p$--harmonic functions, 
to obtain a simpler proof of the Riemannian Penrose inequality
with a single black hole.

With an argument similar to the one~\cite{Ago_Maz_Oro_2}, we show here a sharp inequality involving the ratio between the ADM mass and the capacity of the boundary and a sharp upper bound on this latter in terms of the area of the boundary, for asymptotically flat Riemannian $3$--manifolds with a single end, with a connected, compact boundary and
nonnegative scalar curvature, under appropriate assumptions on the topology and on the mean curvature of the boundary.  
One of the reasons for the interest in these mass--capacity inequalities is to apply them to obtain generalizations of
Bunting and Masood--ul--Alam rigidity theorem~\cite[Theorem~2]{ButMas}, as in~\cite{Miao2005} and~\cite[Section 5]{AMO} (based on results in~\cite{bray1,hirsch}).\\
Our first inequality is an extension of the cases of validity and a refinement, if certain topological assumptions are satisfied, of the one obtained by Bray by means of the positive mass theorem in~\cite{bray1}, while our upper bound on the capacity of the boundary is an improvement, if an appropriate assumption on the mean curvature of the boundary is fulfilled, of the one proved by Bray and Miao in~\cite{bray2} through the technique of the weak inverse mean curvature flow, developed by Huisken and Ilmanen in~\cite{HI}.
Being our inequalities consequence of two ``elementary'' monotonicity formulas holding along the level sets of an appropriate harmonic function, the proofs are more direct and self--contained.\\
We mention that during the preparation of this paper, Miao~\cite{Miao2022} obtained similar inequalities with the same approach.

\smallskip

In order to state precisely our main results, we recall the definition of one--ended asymptotically flat manifold and of ADM mass 
(after the names of R.~Arnowitt, S.~Deser and C.~W.~Misner, who introduced it in~\cite{AdM0}).

\begin{definition}\label{defAFman}
A $3$--dimensional Riemannian manifold $(M,g)$ (with or without boundary) is said to be {\em one--ended asymptotically flat} if there exists a closed and bounded subset $K$ and a diffeomorphism $\Phi: M\setminus K\to\R^{3}\setminus\overline{B}_{r}(0)$ such that in the (coordinate) chart $(M\setminus K, \Phi=(x^{i}))$, called {\em asymptotically flat (coordinate) chart}, setting $g=g_{ij}\,dx^{i}\otimes dx^{j}$, there holds
\begin{equation}\label{eq4}
g_{ij}=\delta_{ij} + O_{2}(\vert x \vert ^{-\tau})\,,
\end{equation}
for some constant $\tau>1/2$ (the {\em order of decay} of $g$ in the asymptotically flat chart $(M\setminus K, (x^{i}))$, briefly {\em the order}), where $\delta$ is the Kronecker delta function.
\end{definition}

Here and in the rest of this work, the {\em(exterior spatial) Schwarzschild manifold of mass $m>0$} is the $3$--dimensional Riemannian manifold with boundary given by the couple
\begin{equation}\label{feq63bisbis}
\Big(\R^{3}\setminus B_{\frac{ m}{2}}(0),\,\Big(1+\frac{m}{\,2\vert  x\vert}\Big)^{4}g_{{\mathrm{eucl}}}\Big)\,,
\end{equation}
which will sometimes be denoted by $(M_{\mathrm{Sch}(m)},\, g_{\mathrm{Sch}(m)})$. This space is easily seen to be a (model) nontrivial example of a one--ended asymptotically flat, $3$--dimensional Riemannian manifold with minimal boundary.

\begin{definition}\label{defADMmass}
Let $(M,g)$ be a one--ended asymptotically flat manifold having integrable or nonnegative scalar curvature and let $\big(E,(x^{1},x^{2},x^{3})\big)$ be an asymptotically flat chart.
The limit
\begin{equation}\label{formADMmass}
m_{\mathrm{ADM}}=\lim_{r\to +\infty}\frac{1}{16\pi}\!\!\!\int\limits_{\{\vert  x\vert\,=\,r\}}\!\!\!(\partial_{j}g_{ij}-\partial_{i}g_{jj})\frac{x^{i}}{\vert  x \vert}\,d\sigma_{\mathrm{eucl}}\,,
\end{equation}
where $g=g_{ij}dx^{i}\otimes dx^{j}$, exists (possibly equal to $+\infty$) and it is independent of the asymptotically flat chart (proved first by Bartnik~\cite{Bartnik} and then independently by Chru\'sciel~\cite{Chrusciel}).\\
This geometric invariant is called {\em ADM mass} of $(M,g)$. 
\end{definition}

This paper presents the current state of the work (in progress) in generalizing and extending the results contained in the PhD thesis of the author, under the supervision of Virginia Agostiniani, Carlo Mantegazza and Lorenzo Mazzieri. Precisely, we will show the following conclusions.

\begin{teorema}[Theorem~\ref{teo2 lower bound differenza tra il rapporto massa cap e una normalizzazione di Willmore energybis}, Theorem~\ref{lower bound differenza tra il rapporto massa cap e una normalizzazione di Willmore energybis} and Corollary~\ref{genBray}]
Let $(M,g)$ be a $3$--dimensional, complete, one--ended asymptotically flat manifold with compact, connected boundary and with nonnegative scalar curvature.
We consider the solution $u\in C^{\infty}(M)$ of the Dirichlet problem
\begin{equation}
\begin{cases}\label{f1prelc4}
\Delta u=0 \ &\mathrm{in} \ M\,\\
u=0 &\mathrm{on} \ \partial M\,\\
u\to 1&\mathrm{at} \ \infty\,
\end{cases}
\end{equation}
and the {\em boundary capacity} of $\partial M$ in $(M,g)$, defined as
\begin{equation}\label{boundcapintermofu}
\mathcal{C}=\frac{1}{4\pi}\int\limits_{\partial M}\!\vert \na u \vert d\sigma=\frac{1}{4\pi}\int\limits_{M}\!\vert \na u \vert^{2}d\mu\,.
\end{equation}
Assume that $H_{2}(M,\partial M;\Z)=0$, then,
\begin{equation}\label{introeq1}
\frac{m_{\mathrm{ADM}}}{\mathcal{C}}\,\geq\,\frac{5}{4}\,+\,\frac{1}{64\pi}\!\int\limits_{\partial M}\!\mathrm{H}^{2}\,d\sigma\,-\,\frac{1}{4\pi}\!\int\limits_{\partial M}\!\bigg(\vert \nabla u \vert\,+\,\frac{\mathrm{H}}{4}\bigg)^{2}\, d\sigma\,,
\end{equation}
with equality if and only if $(M,g)$ is isometric to a (exterior spatial) Schwarzschild manifold of mass $m>0$.
Moreover, if there exists $\alpha\in\big(\!-(2\mathcal{C})^{-1},\,(2\mathcal{C})^{-1}\big]$ such that 
$ \HHH\leq \alpha\big(1-4\mathcal{C}\vert  \nabla u \vert\,\big)$ on $\partial M$, the term on the right hand side of the inequality is greater or equal than $1$, hence
\begin{equation}
m_{\mathrm{ADM}}\,\geq\,\mathcal{C}\,,
\end{equation}
with equality if and only if $(M,g)$ is isometric to a (exterior spatial) Schwarzschild manifold of mass $m>0$.
\end{teorema}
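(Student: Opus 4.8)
The plan is to run a level--set argument for the capacitary potential $u$, in the spirit of~\cite{Ago_Maz_Oro_2}. Set $\Sigma_t=\{u=t\}$ for $t\in[0,1)$; by Sard's theorem and elliptic regularity these are smooth compact surfaces for a.e.\ $t$, with unit normal $\nu=\na u/\abs{\na u}$ pointing towards infinity. First I would record the identities valid for harmonic $u$ along a regular level set: the flux
\[
\int_{\Sigma_t}\abs{\na u}\,d\sigma=4\pi\,\mathcal{C}
\]
is independent of $t$ (so in particular it equals its value on $\pa M$), together with the second--order relation $\pa_\nu\abs{\na u}=-\,\HHH\,\abs{\na u}$. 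The homological hypothesis $H_2(M,\pa M;\Z)=0$ enters exactly once: it forces every regular $\Sigma_t$ to be connected, so that Gauss--Bonnet gives $\int_{\Sigma_t}\kappa\,d\sigma=2\pi\chi(\Sigma_t)\le 4\pi$, which is the only topological input needed below.

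The core is a monotonicity formula. I would introduce a Willmore--type functional $\mathcal F(t)$ assembled from the three level--set integrals $\tfrac{1}{4\pi}\int_{\Sigma_t}\abs{\na u}^2$, $\tfrac{1}{8\pi}\int_{\Sigma_t}\HHH\abs{\na u}$ and $\tfrac{1}{64\pi}\int_{\Sigma_t}\HHH^2$, weighted by explicit functions of $t$ calibrated so that $\mathcal F$ is \emph{constant} along the level sets of the Schwarzschild potential (this normalization is what pins down the weights and fixes the constants $5/4$, $1/64\pi$, $1/4\pi$ in~\eqref{introeq1}). Differentiating through the coarea/first--variation rule
\[
\mathcal F'(t)=\int_{\Sigma_t}\frac{1}{\abs{\na u}}\big(\pa_\nu f+f\,\HHH\big)\,d\sigma ,
\]
I would substitute the Bochner identity $\tfrac12\Delta\abs{\na u}^2=\abs{\na^2u}^2+\Ric(\na u,\na u)$ together with the traced Gauss equation $2\,\Ric(\nu,\nu)=\RRR-2\kappa+\HHH^2-\abs{h}^2$, so that the scalar curvature and the Gaussian curvature of $\Sigma_t$ appear explicitly. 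Discarding the manifestly nonnegative contributions via $\RRR\ge 0$, the refined Kato inequality $\abs{\na^2u}^2\ge\tfrac32\abs{\na\abs{\na u}}^2$, and the Gauss--Bonnet bound $\int_{\Sigma_t}\kappa\le 4\pi$, every leftover term acquires a favourable sign and one is left with $\mathcal F'(t)\ge 0$.

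Two endpoint evaluations then close the argument. At $t=0$ the level set is $\pa M$ and, using $\int_{\pa M}\abs{\na u}=4\pi\mathcal{C}$, the value $\mathcal F(0)$ reproduces the right--hand side of~\eqref{introeq1}. As $t\to1$ the surfaces $\Sigma_t$ exhaust the asymptotically flat end; inserting the expansions $u=1-\mathcal{C}\abs{x}^{-1}+o(\abs{x}^{-1})$ and $g_{ij}=\de_{ij}+O_{2}(\abs{x}^{-\tau})$ into the level--set integrals, and comparing the Willmore content of $\mathcal F$ with the Hawking mass of $\Sigma_t$ (which tends to $m_{\mathrm{ADM}}$), one finds $\lim_{t\to1}\mathcal F(t)=m_{\mathrm{ADM}}/\mathcal{C}$. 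Monotonicity $\mathcal F(0)\le\lim_{t\to1}\mathcal F(t)$ is then precisely~\eqref{introeq1}. For the rigidity statement, equality propagates backwards: it forces $\RRR\equiv0$, each $\Sigma_t$ to be totally umbilic with $\abs{h}^2=\tfrac12\HHH^2$ and $\chi(\Sigma_t)=2$, and the level sets to be concentric round spheres, after which a standard ODE integration identifies $(M,g)$ with a Schwarzschild manifold of some mass $m>0$.

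For the last assertion I would first note that, expanding the square, the right--hand side of~\eqref{introeq1} equals
\[
\tfrac54-\tfrac{1}{4\pi}\!\int_{\pa M}\!\abs{\na u}^2\,d\sigma-\tfrac{1}{8\pi}\!\int_{\pa M}\!\HHH\,\abs{\na u}\,d\sigma ,
\]
the $\HHH^2$--terms cancelling, so that $m_{\mathrm{ADM}}\ge\mathcal{C}$ amounts to showing this expression is $\ge1$. The companion monotonicity formula, which underlies the capacity--area bound of~\cite{bray2}, supplies the sharp gradient estimate $\int_{\pa M}\abs{\na u}^2\,d\sigma\le\pi$ (again saturated only on Schwarzschild). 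Multiplying the hypothesis $\HHH\le\alpha(1-4\mathcal{C}\abs{\na u})$ by $\abs{\na u}\ge0$, integrating, and using $\int_{\pa M}\abs{\na u}=4\pi\mathcal{C}$ yields
\[
\tfrac{1}{8\pi}\!\int_{\pa M}\!\HHH\,\abs{\na u}\le\tfrac{\alpha\mathcal{C}}{2}-\tfrac{\alpha\mathcal{C}}{2\pi}\!\int_{\pa M}\!\abs{\na u}^2 ,
\]
and inserting this reduces the claim to $\tfrac{1-2\alpha\mathcal{C}}{4\pi}\int_{\pa M}\abs{\na u}^2\le\tfrac{1-2\alpha\mathcal{C}}{4}$, which holds since $1-2\alpha\mathcal{C}\ge0$ (here $\alpha\le(2\mathcal{C})^{-1}$ is used) and $\int_{\pa M}\abs{\na u}^2\le\pi$. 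The equality analysis again forces Schwarzschild. I expect the decisive obstacle to be the monotonicity step itself: guessing the correct $t$--weights in $\mathcal F$ and verifying that, after the Bochner/Gauss substitution, the residual terms recombine into a genuinely nonnegative quantity, in tandem with the delicate asymptotic computation certifying $\mathcal F(t)\to m_{\mathrm{ADM}}/\mathcal{C}$.
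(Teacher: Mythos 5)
Your first half matches the paper's strategy in outline (a level--set monotonicity formula proved via Bochner, the traced Gauss equation, a refined Kato--type identity and Gauss--Bonnet on connected level sets, then two endpoint evaluations), but the decisive computation is left unverified, and your candidate functional is off in a way that matters. The paper's monotone quantity $F$ in~\eqref{feq40} involves only the two integrals $\int_{\Sigma_t}\vert\na u\vert^2\,d\sigma$ and $\int_{\Sigma_t}\vert\na u\vert\,\mathrm{H}\,d\sigma$ with explicit weights; no weighted $\int_{\Sigma_t}\mathrm{H}^2\,d\sigma$ enters the monotone quantity. The $\mathrm{H}^2$ appearing in~\eqref{introeq1} is produced only at the very end, by adding and subtracting $\frac{1}{16}\int_{\partial M}\mathrm{H}^2\,d\sigma$ in the boundary value $F(\mathcal{C}/2)$; building $\int\mathrm{H}^2$ into $\mathcal F$ itself would spoil the first--variation structure (its derivative brings in $\Delta^{\Sigma_t}$--type terms of $\mathrm{H}$ that the Bochner mechanism in~\eqref{feq54} does not absorb). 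Two further points: your claimed endpoint identity $\lim_{t\to 1}\mathcal F(t)=m_{\mathrm{ADM}}/\mathcal{C}$ via Hawking--mass convergence is unjustified at decay order $\tau>1/2$ --- the paper only proves the one--sided bound $\limsup\frac{t}{4}\big(16\pi-\int_{\Sigma_t}\mathrm{H}^2 d\sigma\big)\leq 8\pi m_{\mathrm{ADM}}$ using the Euclidean Willmore inequality and Bartnik's flux expansion (luckily the direction you need); and you never address how the monotonicity survives critical values of $u$, which is the main technical difficulty and occupies the cut--off vector fields $X_k$ in the proof of Proposition~\ref{propomonGcap} --- knowing that a.e.\ $t$ is regular does not let you compare $\mathcal F$ at two regular values separated by critical ones.

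The genuine gap is in the last assertion. Your algebraic reduction is correct and is in fact identical to the paper's (it is the inequality $\mathcal{B}\geq(1-2\mathcal{C}\alpha)\mathcal{A}$ of~\eqref{feq63}), so everything hinges on the ``sharp gradient estimate'' $\int_{\partial M}\vert\na u\vert^2\,d\sigma\leq\pi$, which you invoke as a known, unconditional fact supplied by a Bray--Miao--type companion formula. It is false unconditionally: for $\R^3$ minus a round ball of radius $r_0$ with the flat metric (which has $\RRR\equiv 0$ and $H_2(M,\partial M;\Z)=0$) one has $u=1-r_0/\vert x\vert$ and $\int_{\partial M}\vert\na u\vert^2\,d\sigma=4\pi>\pi$; consistently, the mean--curvature hypothesis fails there, since $\mathrm{H}=2/r_0$ while $1-4\mathcal{C}\vert\na u\vert=-3$ on $\partial M$. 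In the paper this estimate, equivalently $\mathcal{A}:=2\mathcal{C}\big(\pi-\int_{\partial M}\vert\na u\vert^2 d\sigma\big)\geq 0$, is precisely the \emph{output} of the second monotonicity formula, under the same mean--curvature hypothesis: one introduces $G(t)=-\pi\mathcal{C}^2/t+\frac{t}{4}\big(1+\frac{\mathcal{C}}{2t}\big)^4\int_{\Sigma_t}\vert\na u\vert^2 d\sigma$ as in~\eqref{feq37}, observes $F(t)=\frac{4t^3}{\mathcal{C}^2}G'(t)$ on the regular set $\mathcal{T}$, proves that $G$ admits a locally absolutely continuous representative (a nontrivial $W^{1,1}_{loc}$ argument, needed exactly to integrate $G'$ across critical values), integrates $G'(t)\geq\frac{\mathcal{C}^2}{4t^3}\,\mathcal{B}$ against the endpoint data $G(\mathcal{C}/2)=-\mathcal{A}$ and $\lim_{t\to\infty}G(t)=0$, and bootstraps via $\mathcal{B}\geq(1-2\mathcal{C}\alpha)\mathcal{A}$ to get $(1+2\mathcal{C}\alpha)\mathcal{A}\geq 0$, hence $\mathcal{A}\geq 0$ and then $\mathcal{B}\geq 0$, which is exactly the statement that the right--hand side of~\eqref{introeq1} equals $1+\mathcal{B}/(8\pi\mathcal{C})\geq 1$. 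Your proposal contains none of this coupled $F$--$G$ mechanism, so the second half --- and with it $m_{\mathrm{ADM}}\geq\mathcal{C}$ and its rigidity --- is not established.
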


\begin{teorema}[Theorem~\ref{teocapacityandareaboundarybis}]
Let $(M,g)$ be a $3$--dimensional, complete, one--ended asymptotically flat manifold with compact, connected boundary and with nonnegative scalar curvature.
Let $u\in C^{\infty}(M)$ be the solution of Dirichlet problem~\eqref{f1prelc4} and let $\mathcal{C}>0$ be the boundary capacity of $\partial M$ in $(M,g)$, given by formula~\eqref{boundcapintermofu}.
Assume that $H_2(M,\partial M;\Z) = 0$ and that there exists $\alpha\in\big(\!-(2\mathcal{C})^{-1},\,(2\mathcal{C})^{-1}\big]$ such that 
$ \HHH\leq \alpha\big(1-4\mathcal{C}\vert  \nabla u \vert\,\big)$ on $\partial M$, then, 
\begin{equation}
\sqrt{\frac{\mathrm{Area}(\partial M)}{16\pi}}\geq\mathcal{C}\,,
\end{equation}
with equality if and only if $(M,g)$ is isometric to a (exterior spatial) Schwarzschild manifold of mass $m>0$.
\end{teorema}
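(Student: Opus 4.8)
The plan is to reduce the inequality to a single scalar estimate on the boundary by Cauchy--Schwarz, and then to produce that estimate from a monotonicity formula along the level sets $\Sigma_t=\{u=t\}$ of the harmonic potential. First I would observe that, by the definition~\eqref{boundcapintermofu} of $\mathcal C$, one has $\int_{\partial M}|\na u|\,d\sigma=4\pi\mathcal C$, so Cauchy--Schwarz yields $(4\pi\mathcal C)^2=\big(\int_{\partial M}|\na u|\,d\sigma\big)^2\le \mathrm{Area}(\partial M)\int_{\partial M}|\na u|^2\,d\sigma$. Consequently the desired bound $\mathrm{Area}(\partial M)\ge 16\pi\mathcal C^2$ follows once we establish the key estimate $\int_{\partial M}|\na u|^2\,d\sigma\le\pi$, and the whole argument reduces to proving this.

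The natural monotone quantity to use is $W(t)=(1-t^2)^{-2}\int_{\Sigma_t}|\na u|^2\,d\sigma$, which is constant and equal to $\pi$ on a Schwarzschild manifold. Two ingredients drive its monotonicity. A first--variation computation, using that $u$ is harmonic (so that on a regular level set $\na^2u(\nu,\nu)=-\HHH|\na u|$, with $\nu=\na u/|\na u|$), gives $\frac{d}{dt}\int_{\Sigma_t}|\na u|^2\,d\sigma=-\int_{\Sigma_t}\HHH|\na u|\,d\sigma$. The second, essential, ingredient is the Bochner--Stern identity for $u$ recast through the Gauss equation of $\Sigma_t$: it writes $\frac{d}{dt}\int_{\Sigma_t}\HHH|\na u|\,d\sigma$ as the Gauss--Bonnet term $\int_{\Sigma_t}K\,d\sigma$ minus a sum of manifestly nonnegative contributions, namely the scalar curvature $\Scal\ge0$, the squared norm of the trace--free second fundamental form of $\Sigma_t$, and the tangential term $|\na_{\Sigma_t}|\na u||^2/|\na u|^2$. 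Here the topological hypothesis $H_2(M,\partial M;\Z)=0$ enters decisively: it forces $\partial M$ and almost every level set to be connected spheres, so that Gauss--Bonnet reads $\int_{\Sigma_t}K\,d\sigma=4\pi$, which is exactly the constant making $W$ nondecreasing on $(0,1)$. Finally, using the asymptotically flat expansion $u=1-\mathcal C\,|x|^{-1}+o(|x|^{-1})$ and $g_{ij}=\delta_{ij}+O_2(|x|^{-\tau})$, the sets $\Sigma_t$ become asymptotically round and a direct calculation gives $\lim_{t\to1^-}W(t)=\pi$.

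Since $W(0)=\int_{\partial M}|\na u|^2\,d\sigma$, chaining $W(0)\le\lim_{t\to1^-}W(t)=\pi$ would give the key estimate at once. The subtlety is that the boundary $\partial M$ need not be minimal, so the term $\int_{\partial M}\HHH|\na u|\,d\sigma$ appearing in $W'$ near $t=0$ does not vanish and could a priori spoil monotonicity at the initial surface. This is precisely where I would use the hypothesis $\HHH\le\alpha\big(1-4\mathcal C|\na u|\big)$ with $\alpha\in(-(2\mathcal C)^{-1},(2\mathcal C)^{-1}]$: integrating it against $|\na u|$ controls $\int_{\partial M}\HHH|\na u|\,d\sigma$ in terms of $\int_{\partial M}|\na u|^2\,d\sigma$ and $\mathcal C$ with the correct sign, and the admissible range of $\alpha$ is calibrated so that the boundary contribution does not obstruct the comparison $W(0)\le\pi$. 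This yields $\int_{\partial M}|\na u|^2\,d\sigma\le\pi$, and together with the first paragraph it proves $\sqrt{\mathrm{Area}(\partial M)/16\pi}\ge\mathcal C$.

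For the rigidity, equality propagates through both inequalities used: equality in Cauchy--Schwarz forces $|\na u|$ to be constant on $\partial M$, while equality $\sqrt{\mathrm{Area}(\partial M)/16\pi}=\mathcal C$ forces $W(0)=\pi=\lim_{t\to1^-}W$, hence $W\equiv\pi$ and the vanishing of every nonnegative term in its derivative: $\Scal\equiv0$, all level sets are totally umbilic, and $|\na u|$ is constant on each level set. A standard argument then identifies $(M,g)$ as rotationally symmetric with zero scalar curvature, i.e. isometric to a Schwarzschild manifold of some mass $m>0$. The hard part will be the derivation and sign analysis of the monotonicity formula: the Bochner--Stern rewriting of $\frac{d}{dt}\int_{\Sigma_t}\HHH|\na u|\,d\sigma$, the rigorous handling of the critical values of $u$ via Sard's theorem and the coarea formula (so that the level--set computations hold for almost every $t$ and pass to the limits $t\to0^+$ and $t\to1^-$), and, most delicately, matching the admissible range of $\alpha$ to the exact sign of the non--minimal boundary term so that the comparison closes.
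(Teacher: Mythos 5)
Your outer reductions are sound and coincide with the paper's: Cauchy--Schwarz with $\int_{\partial M}\vert\na u\vert\,d\sigma=4\pi\mathcal{C}$ reduces everything to $\int_{\partial M}\vert\na u\vert^{2}\,d\sigma\leq\pi$, and your limit $\lim_{s\to1^-}W(s)=\pi$ is correct (your $W$ is, up to reparametrization $s=f(t)$, the paper's second quantity: one checks $G(t)=\tfrac{2\mathcal{C}(1-s)}{1+s}\,\bigl(W(s)-\pi\bigr)$, so $G\leq 0$ is exactly $W\leq\pi$, and $G(\mathcal{C}/2)\leq0$ is exactly your key estimate). The genuine gap is the central claim that $W$ is nondecreasing, together with the mechanism you offer for it. Differentiating gives
\begin{equation}
W'(s)=\frac{1}{(1-s^{2})^{2}}\int\limits_{\{u=s\}}\vert\na u\vert\left(\frac{4s}{1-s^{2}}\,\vert\na u\vert-\HHH\right)d\sigma\,,
\end{equation}
a \emph{first--order} flux with no a priori sign. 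The Bochner/Gauss--Bonnet identity you invoke (with the constant $4\pi$ and the nonnegative terms $\vert\na^{\Sigma}\vert\na u\vert\vert^{2}/\vert\na u\vert^{2}$, $\mathrm{R}/2$, $\vert\ringg{\mathrm{h}}\vert^{2}/2$) controls the derivative of $\int_{\Sigma}\HHH\vert\na u\vert\,d\sigma$, i.e.\ it yields a \emph{second--order} differential inequality for $W$ --- in the paper's notation, $F'\geq0$ where $F(t)=\tfrac{4t^{3}}{\mathcal{C}^{2}}G'(t)$. To extract a sign for $W'$ one must integrate this from the boundary, and the sign of the boundary datum is not local: the hypothesis $\HHH\leq\alpha(1-4\mathcal{C}\vert\na u\vert)$ only yields $\mathcal{B}\geq(1-2\mathcal{C}\alpha)\mathcal{A}$, where $\mathcal{A}=2\mathcal{C}\bigl[\pi-\int_{\partial M}\vert\na u\vert^{2}d\sigma\bigr]$ and $\mathcal{B}=F(\mathcal{C}/2)$; note that $\mathcal{A}\geq0$ \emph{is} your key estimate, so "integrating the hypothesis against $\vert\na u\vert$" relates the boundary terms to each other but cannot, by itself, close the comparison. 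The paper closes it by a global bootstrap: from $G'(t)\geq\tfrac{\mathcal{C}^{2}}{4t^{3}}\mathcal{B}$ integrated out to infinity and $\lim_{t\to\infty}G=0$ one gets $(1+2\mathcal{C}\alpha)\mathcal{A}\geq0$, hence $\mathcal{A}\geq0$, hence $\mathcal{B}\geq0$, and only then $G'\geq0$. Your proposal defers exactly this step ("the hard part will be\dots matching the admissible range of $\alpha$"), which is the heart of the theorem.

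Two further consequences of the same confusion. First, $W$ itself is likely the wrong monotone quantity: one computes $(1-s^{2})^{2}\,\mathcal{C}\,W'(s)=(1-s)^{2}F+(1+s)^{2}G$, which under the established facts combines a nonnegative term ($F\geq0$) with a nonpositive one ($G\leq0$), so no sign follows; the paper's weight $\tfrac{2\mathcal{C}(1-s)}{1+s}$ in $G$ is calibrated precisely so that $G'$ is proportional to $F$ and the two monotonicity formulas interlock. Second, your rigidity discussion inherits the problem: you attribute the vanishing of $\mathrm{R}$, of $\ringg{\mathrm{h}}$ and of the tangential gradient to "the derivative of $W$", but those terms live in $F'$, not in $W'$; the paper's rigidity goes $G$ constant $\Rightarrow F\equiv0\Rightarrow F'\equiv0$ and then reconstructs the Schwarzschild metric. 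Finally, a technical point you wave at but which matters: to integrate $G'\geq\mathcal{C}^{2}\mathcal{B}/(4t^{3})$ across the critical values of $u$, Sard plus coarea are not enough --- the derivative is only defined on $\mathcal{T}$, and the paper must first prove that $G$ admits a locally absolutely continuous ($W^{1,1}_{loc}$) representative via a cut--off/divergence--theorem argument before the fundamental theorem of calculus applies.
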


\bigskip

\begin{ackn}
We are really grateful to Riccardo Benedetti and Stefano Borghini for some precious discussions.
We also want to deeply thank Virginia Agostiniani, Carlo Mantegazza and Lorenzo Mazzieri for their interest and support in the present work and for the useful suggestions and comments about this manuscript.
The author is member of the Gruppo Nazionale per l'Analisi
Matematica, la Probabilit\`a e le loro Applicazioni (GNAMPA), which is part of the Istituto
Nazionale di Alta Matematica (INdAM).
\end{ackn}

\section{Preliminaries}\label{settmasscapacity}

For the convenience of the readers, in this small section we collect some basic facts about the solution $u\in C^{\infty}(M)$ of Dirichlet problem~\eqref{f1prelc4} in $(M,g)$, defined as above,
and its level sets.

By the maximum principle,
$$
\mathrm{Int}(M)=M\setminus \partial M=\{0<u<1\}\,.
$$
It follows then $\partial M=\{u=0\}$ and from the Hopf lemma that zero is a regular value of $u$.  

The last condition in system~\eqref{f1prelc4} implies that $u:M\to[0,1)$ is proper.
Then, some consequences are the compactness of each level set of $u$, which leads to the finiteness of their $2$--dimensional Hausdorff measure of $(M,g)$ (see~\cite[Theorem~1.7]{Hardt1}), and the fact that the set of the regular values of $u$ is an open set of $[0,1)$.
Hence, the set of the critical values of $u$ is a closed set of $[0,1)$ and has zero Lebesgue measure, by Sard theorem.

It is known that in a generic asymptotically flat chart $(x^{1},x^{2},x^{3})$ of order $\tau$, $\tau>1/2$, one has
\begin{equation}\label{hexpu}
u=1-\frac{\mathcal{C}}{\vert x\vert}+O_{2}(\vert x\vert^{-1-\beta})
\end{equation}
for some $1/2<\beta<\min\{\tau,1\}$, as $1-u$ is {\em the boundary capacity potential},~\cite[Lemma A.2]{MMT}. Here, $\mathcal{C}$ is the boundary capacity of $\partial M$ in $(M,g)$, given by formula~\eqref{boundcapintermofu} and which also satisfies
\begin{equation}\label{feq44}
\int\limits_{\{u\,=\,s\}}\!\!\!\vert \na u\vert\, d\sigma=4\pi\mathcal{C}\,
\end{equation}
for a.e. $s\in[0,1)$, in particular, any $s$ regular value for $u$, by the divergence theorem and Sard theorem.
A consequence of formula~\eqref{hexpu} is the compactness of $\mathrm{Crit}(u)$, therefore, $\mathrm{Crit}(u)$ has finite $1$--dimensional Hausdorff measure (see~\cite[Theorem~1.1]{Hardt2}).
Also, it follows from formula~\eqref{hexpu} that the level sets $\{u = t\}$ are diffeomorphic to the sphere $\SSS^{2}$ for every $t\in [0, 1)$ sufficiently close to $1$, once it is proved that they are connected. For these last two facts we refer to~\cite[Remark~2.1]{AMO}.

Finally, since the boundary $\partial M$ of $M$ is connected, if $M$ has a simple topology, namely $H_{2}(M,\partial M;\Z)=0$, then all regular level sets of $u$ are closed, connected surfaces.
A proof of this result can be found in~\cite[Subsection~1.3]{AMMO}, but, in Appendix~\ref{AppA}, one is presented based on the fact that the first Betti number $b_{1}(M)$ is zero. Indeed, for this type of manifolds the two conditions $H_{2}(M,\partial M;\Z)=0$ and $b_{1}(M)=0$ are equivalent and they also imply that $M$ is orientable and $\partial M$ is a $2$--sphere (up to diffeomorphism).

\section{First monotonicity formula}\label{SecMonforFandRigstat1}

In this section we are going to prove our first monotonicity formula. It is a natural version with boundary and for the comparison with the (exterior spatial) Schwarzschild manifold of mass $\mathcal{C}$ (see formula~\eqref{boundcapintermofu}) of the one shown in~\cite{Ago_Maz_Oro_2}.
The main difficulty amounts to ensure that the monotonicity survives the critical
values of $u$ (solution of Dirichlet problem~\eqref{f1prelc4}), that, as already recalled in Section~\ref{settmasscapacity}, form a set of zero Lebesgue measure. 
The approach followed to overcome this difficulty is the same of~\cite{Ago_Maz_Oro_2}, namely via a sequence of appropriate cut--off functions.
This monotonicity formula and the analysis of when it is constant play a key role in obtaining inequality~\eqref{introeq1} with the equality case and are the subject of the following two propositions.

\begin{proposition}\label{propomonGcap}
Let $(M,g)$ be a $3$--dimensional, complete, one--ended asymptotically flat manifold with compact, connected boundary and with nonnegative scalar curvature.
Let $u\in C^{\infty}(M)$ be the solution of Dirichlet problem~\eqref{f1prelc4} and let $\mathcal{C}>0$ be the boundary capacity of $\partial M$ in $(M,g)$ given by formula~\eqref{boundcapintermofu}.
Consider the function $F:[\mathcal{C}/2, + \infty)\to \R$ defined as 
\begin{equation}\label{feq40}
F(t)=4\pi t\,+ \,\frac{t^{3}}{\mathcal{C}^{2}} \,\left(1+\frac{\mathcal{C}}{2t}\right)^{\!3} \!\left(1-\frac{3\mathcal{C}}{2t}\right)\int\limits_{\Sigma_{t}} \vert \nabla u \vert^{2}\, d\sigma\,-\,\frac{t^{2}}{\mathcal{C}} \,\left(1+\frac{\mathcal{C}}{2t}\right)^{\!2}\!\int\limits_{\Sigma_{t}}\vert \nabla u \vert\,\mathrm{H}\, d\sigma\,,
\end{equation}
where $\Sigma_{t}$ is the level set of $u$, given by
\begin{equation}\label{Sigmat}
\Sigma_{t}:=\Big\{u=\Big(1-\frac{\mathcal{C}}{2t}\Big)/\Big(1+\frac{\mathcal{C}}{2t}\Big)\Big\}\,,
\end{equation}
$\mathrm{H}$ is the mean curvature of $\Sigma_{t}\setminus\mathrm{Crit}(u)$ with respect to the $\infty$--pointing unit normal vector field $\nu={\nabla u}/{\vert \nabla u \,\vert}$ and $\sigma$ is the $2$--dimensional Hausdorff measure of $(M,g)$.
Then, if all regular level sets of $u$ are connected, $F$ is nondecreasing on the set $\mathcal{T}$, defined by
\begin{equation}\label{defTcap}
\mathcal{T}:=\Big\{t\in[\mathcal{C}/2,+\infty)\,:\, \Big(1-\frac{\mathcal{C}}{2t}\Big)/\Big(1+\frac{\mathcal{C}}{2t}\Big) \text{ is a regular value of $u$}\Big\}\,.
\end{equation}
\end{proposition}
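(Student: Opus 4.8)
The plan is to first prove the pointwise differential inequality $F'(t)\ge 0$ on the interior of $\mathcal T$, and then to promote it to monotonicity on all of $\mathcal T$; the genuine difficulty lies in this second step, namely in crossing the critical values of $u$.

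\emph{Derivative on regular intervals.} I would fix a maximal open interval $(t_1,t_2)\subset\mathcal T$ on which $\phi(t):=(1-\tfrac{\mathcal C}{2t})/(1+\tfrac{\mathcal C}{2t})$ takes only regular values, so that the $\Sigma_t$ form a smooth foliation of a collar and $\vert\na u\vert$ is bounded away from zero there. On this interval $F$ is smooth and the leaves move with outward normal speed $V=\phi'(t)/\vert\na u\vert$ along $\nu=\na u/\vert\na u\vert$. The computation rests on a few identities for the harmonic function $u$ restricted to its level sets: the trace identity $\pa_\nu\vert\na u\vert=\na^2u(\nu,\nu)=-\HHH\,\vert\na u\vert$ (equivalently $\vert\na u\vert\,\HHH=-\pa_\nu\vert\na u\vert$), and the splitting $\vert\na^2 u\vert^2=\HHH^2\vert\na u\vert^2+2\,\bigl|\na^{\Sigma_t}\vert\na u\vert\bigr|^2+\vert\na u\vert^2\vert A\vert^2$, where $\na^{\Sigma_t}$ denotes the tangential gradient and $A$ the second fundamental form of $\Sigma_t$. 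Feeding these into the first variation formula $\frac{d}{dt}\int_{\Sigma_t}f\,d\sigma=\int_{\Sigma_t}(\pa_\nu f+f\HHH)\,V\,d\sigma$ (which already gives the pleasant relation $\frac{d}{dt}\int_{\Sigma_t}\vert\na u\vert^2\,d\sigma=-\phi'\int_{\Sigma_t}\HHH\,\vert\na u\vert\,d\sigma$) and using the product rule on the explicit weights in~\eqref{feq40}, I would collapse $F'(t)$ into a single integral over $\Sigma_t$.

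\emph{Nonnegativity.} To see that this integral has the right sign I would bring in the Bochner formula $\tfrac12\Delta\vert\na u\vert^2=\vert\na^2u\vert^2+\Ric(\na u,\na u)$, the Gauss equation in the form $\Ric(\nu,\nu)=\tfrac12\Scal-K_{\Sigma_t}+\tfrac12(\HHH^2-\vert A\vert^2)$, and the Gauss--Bonnet theorem $\int_{\Sigma_t}K_{\Sigma_t}\,d\sigma=2\pi\chi(\Sigma_t)\le 4\pi$. It is exactly here that the hypothesis on the topology of the level sets enters: since all regular level sets are assumed connected (and closed, orientable), one has $\chi(\Sigma_t)\le 2$, which is what makes the Gauss--Bonnet term go the correct way. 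Combining these ingredients with the Hessian splitting above and discarding the manifestly nonnegative contributions --- the tangential gradient term $\bigl|\na^{\Sigma_t}\vert\na u\vert\bigr|^2$, the traceless part of $A$, the scalar curvature term (nonnegative by assumption), and the Gauss--Bonnet deficit --- I expect $F'(t)\ge0$, the borderline terms being precisely those realized on the Schwarzschild profile.

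\emph{Crossing the critical values.} The set $\mathcal T$ is not an interval: between two of its points there may lie critical values of $u$, at which $\Sigma_t$ degenerates and the pointwise computation breaks down. This is the main obstacle, and I would handle it as in~\cite{Ago_Maz_Oro_2}: rather than differentiating, integrate the Bochner identity over the region $\{\phi(t_1)<u<\phi(t_2)\}$ against a family of cutoff functions $\zeta_\eps$ that vanish on a neighbourhood of $\mathrm{Crit}(u)$. Because $\mathrm{Crit}(u)$ is compact with finite $\mathcal H^1$--measure (as recorded in Section~\ref{settmasscapacity}) and $\vert\na u\vert$ vanishes along it, the error terms generated by $\na\zeta_\eps$ can be shown to vanish as $\eps\to0$, yielding the integrated inequality $F(t_2)\ge F(t_1)$ for arbitrary $t_1\le t_2$ in $\mathcal T$. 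This regularization and limit, not the differential identity itself, is the delicate point of the argument.
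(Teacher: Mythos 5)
Your strategy is sound, and its differential part coincides with what the paper records before its proof: the first variation formulas, the identity $\pa_\nu\vert\na u\vert=\na du(\nu,\nu)=-\HHH\vert\na u\vert$, the splitting $\vert\na du\vert^{2}=\vert\na u\vert^{2}\vert\mathrm{h}\vert^{2}+\HHH^{2}\vert\na u\vert^{2}+2\vert\na^{\Sigma_t}\vert\na u\vert\vert^{2}$, Bochner, the traced Gauss equation and Gauss--Bonnet with $\chi(\Sigma_t)\leq 2$ are precisely the ingredients that produce formula~\eqref{feq54}. The one step you assert rather than carry out is that the specific weights in~\eqref{feq40} make all cross terms assemble into the perfect square $\tfrac{3}{4}\big(\tfrac{4u}{1-u^{2}}\vert\na u\vert-\HHH\big)^{2}$ plus manifestly nonnegative terms; this is routine but it is the actual content of the identity, so in a full write-up it must be checked.

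Where you genuinely diverge from the paper is the crossing of critical values. The paper does not excise a spatial neighbourhood of $\mathrm{Crit}(u)$: it multiplies the singular part of the vector field $X$ by $\eta_k\big(\vert\na u\vert/((1-u)(1+u)^{3})\big)$, i.e.\ it cuts off at small \emph{gradient values}, and the decisive structural point is that the resulting error term in~\eqref{feq50c4} is $\eta_k'\geq 0$ times a perfect square, hence nonnegative and simply discardable; the divergence theorem then applies exactly to the globally smooth $X_k$, and the only measure-theoretic input is $\mu(\mathrm{Crit}(u))=0$. Your scheme --- cutoffs $\zeta_\eps$ vanishing near $\mathrm{Crit}(u)$, with the commutator error $\int g(X,\na\zeta_\eps)\,d\mu$ killed by the local boundedness of $X$ (note $\vert\na\vert\na u\vert\vert\leq\vert\na du\vert$ on compacta) together with $\mathcal{H}^{1}\big(\mathrm{Crit}(u)\big)<+\infty$ (a covering by balls with $\sum_i r_i$ bounded gives $\int\vert\na\zeta_\eps\vert\,d\mu\lesssim\sum_i r_i^{2}\to 0$) --- also works, and if carried out it even yields the \emph{equality} $F(T)-F(t)=\int\mathrm{div}(X)\,d\mu$ where the paper's route only gives the inequality~\eqref{feq51c4}; the price is invoking the Hardt--Simon structure theorem for the critical set, which the paper's cutoff makes unnecessary at this point. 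One genuine point you gloss over: passing from $\int\zeta_\eps\,\mathrm{div}(X)\,d\mu$ to $\int\mathrm{div}(X)\,d\mu$ is not immediate, because $\mathrm{div}(X)$ is not a priori in $L^{1}$ near $\mathrm{Crit}(u)$ --- the Kato-positive term $\big(\vert\na du\vert^{2}-\vert\na\vert\na u\vert\vert^{2}\big)/\vert\na u\vert^{2}$ may blow up there. As in the paper, you must split $\mathrm{div}(X)$ into a nonnegative part, handled by monotone convergence (whose finiteness then comes for free from the flux identity), and a part dominated in $L^{1}_{loc}$ by $\tfrac{6u}{1-u^{2}}\vert\na du\vert+\vert\Ric\vert$ times bounded factors, handled by dominated convergence. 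With that decomposition added, your argument closes, and the connectedness hypothesis enters, exactly as in the paper, through Gauss--Bonnet applied to the regular level sets appearing after the coarea formula.
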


Notice that the function $F$ is well defined, indeed, all level sets have finite $\sigma$--measure, as already observed in Section~\ref{settmasscapacity}, and the integrand functions are $\sigma$--a.e. bounded on each level set of $u$.
Indeed, one has 
\begin{equation}
\label{eq:hdubound1}
\big\vert \vert \nabla u \vert \mathrm{H}\big\vert \leq \vert  \nabla du (\nu,\nu) \vert\leq \vert  \nabla du  \vert \, \quad 
\end{equation}
wherever $\vert \nabla u\vert \neq 0$, since 
\begin{equation}\label{H1}
\mathrm{H}=- \frac{\nabla du (\nabla u,\nabla u)}{\vert \nabla u\vert ^{3}}=-\frac{g(\nabla \vert \nabla u\vert , \nabla u)}{ \vert \nabla u\vert^{2}}\,,
\end{equation}
as $u$ is harmonic.

Observe also that the set $\mathcal{T}$ differs from $[\mathcal{C}/2,+\infty)$ only for a negligible set and is a disjoint countable union of open intervals and of only one interval of type $[a, b)$, with $a=\mathcal{C}/2$, since the set of the regular values of $u$ is an open set of $[0,1)$ and $\partial M=\{u=0\}$ is a regular level set of $u$, as explained in Section~\ref{settmasscapacity}. 
In $\mathcal{T}$ the function $F$ is continuously differentiable, with first derivative given by
\begin{equation}\label{feq54}
F'(t)=4\pi- \int\limits_{\Sigma_t}\frac{\,\rm{R}^{\Sigma_{t}}}{2} \, d\sigma +\int\limits_{\Sigma_{t}}\bigg[\frac{\vert \nabla^{\Sigma_t}\vert \nabla u\vert\vert^{2}}{\vert \nabla u \vert^{2}}+\frac{\RRR}{2}+\frac{\,\vert \ringg{\mathrm{h}}\vert^{2}}{2}+ \frac{3}{4}\left(\frac{4u}{1-u^{2}}\,\,\vert \nabla u \vert-\mathrm{H}\right)^{\!2}\bigg]d\sigma\,.
\end{equation}
Indeed,
\begin{align}
\frac{d}{dt}\int\limits_{\Sigma_{t}} \vert \nabla u \vert^{2}\, d\sigma&=\,-\,\frac{\mathcal{C}}{t^{2}}\left(1+\frac{\mathcal{C}}{2t}\right)^{\!-2}\int\limits_{\Sigma_{t}} \vert \nabla u \vert\,\mathrm{H}\, d\sigma\,,\label{cap4geq18}\\
\frac{d}{dt}\int\limits_{\Sigma_{t}} \vert \nabla u \vert \,\mathrm{H}\,d\sigma&=-\frac{\mathcal{C}}{t^{2}}\left(1+\frac{\mathcal{C}}{2t}\right)^{\!-2} \int\limits_{\Sigma_{t}} \vert \nabla u \vert \left[ \, \Delta^{\Sigma_{t}} \!\left(\frac{1}{\vert \nabla u \vert}\right) + \, \frac{\vert \mathrm{h}\vert^{2}+\Ric (\nu,\nu)}{\vert \nabla u\vert} \, \right]\,d\sigma\,\\
&=-\frac{\mathcal{C}}{t^{2}}\left(1+\frac{\mathcal{C}}{2t}\right)^{\!-2}\int\limits_{\Sigma_{t}} \bigg[ \frac{\vert \nabla^{\Sigma_t}\vert \nabla u\vert\vert^{2}}{\vert \nabla u \vert^{2}}+\frac{\RRR}{2}-\frac{\RRR^{\Sigma_{t}}}{2}+\frac{\,\vert \ringg{\mathrm{h}}\vert^{2}}{2}+\frac{3\,\mathrm{H}^{2}}{4}\bigg]\,d\sigma\,,
\end{align}
where $\na^{\Sigma_{t}}$, $\Delta^{\Sigma_{t}}$ are the Levi--Civita connection and the Laplace--Beltrami operator of the induced metric $g^{\Sigma_{t}}$, respectively, $\rm{R}^{\Sigma_{t}}$ is the scalar curvature of ${\Sigma_{t}}$ and finally, $\mathrm{h}$, $\ringg{\mathrm{h}}$ denote the second fundamental form of ${\Sigma_{t}}$ and its traceless version with respect to $\nu=\na u /\vert\na u\vert$, respectively. Here, the first and second equality follow from the normal first variation of the volume measure and of the mean curvature, whereas the last one is obtained with the help of the traced Gauss equation and the divergence theorem.
Notice that the last integral of the right hand side of equality~\eqref{feq54} is always nonnegative, as $\RRR\geq 0$ (by assumption), and if $\Sigma_{t}$ is connected, then the first two summands also give a nonnegative contribution, by virtue of Gauss--Bonnet theorem, thus $F'(t)\geq 0$.

Let us now prove Proposition~\ref{propomonGcap}, where the difficulty lies in the possible presence of critical values for the function $u$.

\begin{proof}
We consider on $M\setminus \mathrm{Crit}(u)$ the vector field $X$, given by
\begin{align}\label{X}
X:=\frac{1+u}{2(1-u)}\, \nabla u+ \frac{\mathcal{C}}{(1-u)^{2}} \,\nabla \vert \nabla u\vert+ \frac{2\mathcal{C}(2u-1)}{(1+u)(1-u)^{3}}\,\vert \nabla u \vert \nabla u\,.
\end{align}
With the help of Bochner formula,
\begin{equation}\label{Bochnerformulacap4}
\frac{1}{2}\, \Delta\, | \nabla f |^{2}=|  \nabla df  |^{2}+\mathrm{Ric}( \nabla f, \nabla f )+ g(\nabla\,\Delta f, \nabla f)\,
\end{equation}
for every $f\in C^{\infty}(M)$, and being $u$ a harmonic function, the divergence of $X$ can be expressed as
\begin{align}
\label{div(X)nogeom}
\mathrm{div}(X)=\frac{\mathcal{C}\vert \nabla u \vert}{(1-u)^{2}}&\left[\frac{\vert \nabla u \vert}{\mathcal{C}}+\frac{12 u^{2}}{(1-u^{2})^{2}}\,\vert \nabla u \vert^{2}
+\frac{6 u }{ 1-u^{2}} \, \frac{g(\nabla \vert \nabla u\vert,\nabla u )}{\vert \nabla u\vert}\right.\\
&\quad \left. +\,\frac{ \vert \nabla du\vert^{2}-\vert \nabla\vert \nabla u\vert\vert^{2}+\Ric(\na u,\na u)}{{\vert\na u\vert^2}} \right]\,.
\end{align}
Notice that an equivalent expression for $\mathrm{div}(X)$, adapted to the (regular portions of the) level sets of $u$, namely 
\begin{equation}\label{div(X)geom}
\mathrm{div}(X)=\frac{\mathcal{C}\vert \nabla u \vert}{(1-u)^{2}} \bigg[\,\frac{\vert \nabla u \vert}{\mathcal{C}}-\frac{\,\rm{R}^{\Sigma}}{2}+\frac{\vert\nabla^{\Sigma}\vert \nabla u\vert \vert^{2}}{\vert \nabla u \vert^{2}}+\frac{\RRR}{2}+\frac{\, \vert \ringg{\mathrm{h}}\vert^{2}}{2}+\frac{3}{4}\left( \frac{4 u}{1-u^2} \,\vert \nabla u \vert-\mathrm{H}\right)^{\!\!2}\bigg]\,,
\end{equation}
can be obtained by the traced Gauss equation and the identity
\begin{equation} \label{feq70}
\vert  \nabla du \vert^{2}=\vert \nabla u \vert^{2} \vert  \rm{h} \vert^{2}+\vert\nabla\vert \nabla u \vert\vert^{2}+\vert\nabla^{\Sigma}\vert \nabla u \vert\vert^{2}\,.
\end{equation}
Above, $\mathrm{h}, \mathrm{H}, \rm{R}^{\Sigma}$ and $\na^\Sigma$ are all the ones associated to the (regular portions of the) level set of $u$ that passes for the point where 
$\mathrm{div}(X)$ is computed.\\
Let us now show that $F(t)\leq F(T)$ whenever $t,T\in \mathcal{T}$ satisfy $t<T$.
To simplify the exposition, we introduce the diffeomorphism $f:[\mathcal{C}/2, + \infty)\to[0,1)$, defined by
\begin{equation}\label{cap4f}
f(t):=\Big(1-\frac{\mathcal{C}}{2t}\Big)/\Big(1+\frac{\mathcal{C}}{2t}\Big)\,.
\end{equation}
We treat the non--trivial case in which the open interval $\big(f(t),f(T)\big)$ contains critical values of $u$.
In this case, the vector field $X$ is no longer well defined in $\{f(t)\leq u\leq f(T)\}$ and to overcome this difficulty, we consider a pointwise nondecreasing sequence of cut--off functions $\{ \eta_k\}_{k \in \mathbb{N}^{+}}$ such that, for every $k \in \mathbb{N}^{+}$, the functions $\eta_k:[0,+\infty) \to [0,1]$ are smooth, nondecreasing and satisfy
\begin{align}
\label{defcutoffcap4}
\eta_k (\tau) \equiv 0\quad \text{in $\bigg[0 \,,\frac{1}{2k}\bigg]$}\,,\quad 
0\leq \eta_k'(\tau)\leq 2 k\quad \text{in $\bigg[\frac{1}{2k}\,,\frac{3}{2k}\bigg]$},
\quad
\eta_k(\tau) \equiv 1\quad\text{in $\bigg[\frac{3}{2k} \, ,+\infty\!\bigg)$}\,.
\end{align}
Using these cut--off functions, we define for every $k \in\mathbb{N}^{+}$, the vector fields
\begin{equation}
X_{k}:=\frac{1+u}{2(1-u)}\, \nabla u+ \eta_{{k}}\!\left(\frac{\vert \nabla u\vert}{(1-u)(1+u)^{3}}\right)\!\left[ \frac{\mathcal{C}}{(1-u)^{2}}\, \nabla \vert \nabla u\vert+ \frac{2\mathcal{C}(2u-1)}{(1+u)(1-u)^{3}}\,\vert \nabla u \vert \nabla u \right].
\end{equation}
Notice that the vector fields $X_k$ are well defined in $M$ and they coincide with the vector field $X$ in formula~\eqref{X}, whenever restricted to a compact subset of $M\setminus \mathrm{Crit}(u)$, for $k$ large enough. 
Moreover, they have divergence given by the following formula,
\begin{align}
\mathrm{div}(X_{k})
&=\frac{\mathcal{C}\vert \nabla u \vert}{(1-u)^{2}}\left\{\eta_{{k}}\!\left(\frac{\vert  \nabla u \vert}{(1-u)(1+u)^{3}}\right)\left[\frac{6 u }{ 1-u^{2}} \, \frac{g(\nabla \vert \nabla u\vert,\nabla u )}{\vert \nabla u\vert}+\frac{\Ric(\na u,\na u)}{{\vert\na u\vert^2}}\right]\right.\\
&\quad\left.+\eta_{{k}}\!\left(\frac{\vert  \nabla u\vert}{(1-u)(1+u)^{3}}\!\right)\left[\frac{12 u^{2}}{(1-u^{2})^{2}}\, \vert \nabla u \vert^{2}+\frac{ \vert \nabla du\vert^{2}-\vert \nabla\vert \nabla u\vert\vert^{2}}{{\vert\na u\vert^2}}\right] +\frac{\vert \nabla u \vert}{\mathcal{C}}\right\}\\
&\quad+\frac{\mathcal{C}}{(1-u^2)^{3}}\, \eta'_{{k}}\!\left(\frac{\vert \nabla u \vert}{(1-u)(1+u)^{3}}\!\right)\left\vert  \frac{2(2u-1)}{1-u^{2}} \,\vert \nabla u \vert \na u+ \nabla\vert  \nabla u\vert \right\vert^{2} \,. \label{feq50c4}
\end{align}
Since the last summand of the above expression is nonnegative and since, for large enough $k$, the vector field $X_k$ 
coincides with $X$ on the boundary of $\{f(t)< u< f(T)\}$, the divergence theorem, applied to $X_{k}$ on $\{f(t)< u< f(T)\}$, gives 
\begin{align}
\label{feq51c4}
&F(T)-F(t)=\!\!\!\int\limits_{\left\{f(t)< u<f(T)\right\}} \!\!\!\!\!\mathrm{div}(X_k) \, d\mu 
\geq \!\!\! \int\limits_{\left\{f(t)< u<f(T)\right\}} \!\!\!\!\! \widehat{P}_k \, d\mu +\!\!\! \int\limits_{\left\{f(t)< u<f(T)\right\}} \!\!\!\!\! \widehat{D}_k \, d\mu \,,\quad\,\,
\end{align}
where we set 
\begin{align}
\widehat{P}_k&:=\frac{\mathcal{C}\vert \nabla u \vert}{(1-u)^{2}}\left[\frac{\vert \nabla u \vert}{\mathcal{C}}+\eta_{{k}}\!\left(\!\frac{\vert  \nabla u\vert}{(1-u)(1+u)^{3}}\!\right) \widehat{P}\right]\,,\\
\widehat{D}_k&:=\frac{\mathcal{C}\vert \nabla u \vert}{(1-u)^{2}}\, \eta_{{k}}\!\left(\frac{\vert  \nabla u \vert}{(1-u)(1+u)^{3}}\right)\widehat{D}\,,
\end{align}
with
\begin{align}
\widehat{P}&:=\frac{12 u^{2}}{(1-u^{2})^{2}}\, \vert \nabla u \vert^{2} +\frac{ \vert \nabla du\vert^{2}-\vert \nabla\vert \nabla u\vert\vert^{2}}{{\vert\na u\vert^2}}\,,\\
\widehat{D}&:=\frac{6 u }{ 1-u^{2}} \, \frac{g(\nabla \vert \nabla u\vert,\nabla u )}{\vert \nabla u\vert}+\frac{\Ric(\na u,\na u)}{{\vert\na u\vert^2}}\,.
\end{align}
Notice that the functions $\widehat{P}$ and $\widehat{D}$ are $\mu$--a.e. well defined and smooth as well as $\mathrm{div}(X)$, being $\mu\big(\mathrm{Crit}(u)\big)=0$ (see Section~\ref{settmasscapacity}). Furthermore,
\begin{enumerate}[ label=$\mathrm{(\arabic*)}$]
\item one has
$$
0\leq P_k \nearrow\, \frac{\mathcal{C}\vert \nabla u \vert}{(1-u)^{2}}\left[\frac{\vert \nabla u \vert}{\mathcal{C}}+ \widehat{P}\,\mathbb{I}_{M\setminus\mathrm{Crit}(u)}\right]\,\,\,\text{pointwise on $M$ as}\,\,\,k\to+\infty\,,
$$
where $\mathbb{I}_{M\setminus\mathrm{Crit}(u)}$ denotes the characteristic function of $M\setminus\mathrm{Crit}(u)$;
\item one gets
$$
\widehat{D}_{k}\to \frac{\mathcal{C}\vert \nabla u \vert}{(1-u)^{2}}\,\widehat{D}\, \mathbb{I}_{M\setminus\mathrm{Crit}(u)}\,\,\,\,\,\text{pointwise on $M$ as}\,\,\,k\to+\infty\,,
$$
where there hold $\mu$--a.e. the following inequalities
$$
\vert  \widehat{D}_k  \vert \leq\frac{\mathcal{C}\vert \nabla u \vert}{(1-u)^{2}}\, \vert  \widehat{D}\vert \quad\quad \text{and}\quad\quad\vert  \widehat{D}\vert \leq\bigg[\frac{6 u }{ 1-u^{2}}\, \vert \nabla du\vert +\vert\Ric\vert\,\bigg] \in L^{1}_{loc}(M)\,;
$$
\item finally, one observes
$$
\mathrm{div}(X)=\frac{\mathcal{C}\vert \nabla u \vert}{(1-u)^{2}}\bigg[\frac{\vert \nabla u \vert}{\mathcal{C}}+\widehat{P}+\widehat{D}\bigg]
$$
in $M\setminus\mathrm{Crit}(u)$, by identity~\eqref{div(X)nogeom}.
\end{enumerate}
By point $\mathrm{(2)}$, the dominated convergence theorem implies
\begin{align}\label{cap4geq15}
\lim_{k\to + \infty}\int\limits_{\left\{f(t)< u<f(T)\right\}} \!\!\! \widehat{D}_k \, d\mu=\!\!\!\int\limits_{\left\{f(t)< u<f(T)\right\}} \!\!\!\frac{\mathcal{C}\vert \nabla u \vert}{(1-u)^{2}}\,\widehat{D} \, d\mu \,,
\end{align}
whereas, by point $\mathrm{(1)}$ and the monotone convergence theorem, it follows
\begin{align}\label{cap4geq16}
\lim_{k\to + \infty}\int\limits_{\left\{f(t)< u<f(T)\right\}} \!\!\! \widehat{P}_k \, d\mu=\!\!\!\int\limits_{\left\{f(t)< u<f(T)\right\}} \!\!\!\frac{\mathcal{C}\vert \nabla u \vert}{(1-u)^{2}}\bigg[\frac{\vert \nabla u \vert}{\mathcal{C}}+ \widehat{P}\,\bigg]\, d\mu \,.\end{align}
As a consequence of inequality~\eqref{feq51c4} with the existence of limit~\eqref{cap4geq15} finite, the sequence of nonnegative real numbers given by the integrals of the functions $P_k$ is bounded from above, then
$$
\frac{\mathcal{C}\vert \nabla u \vert}{(1-u)^{2}}\bigg[\frac{\vert \nabla u \vert}{\mathcal{C}}+ \widehat{P}\,\bigg]\in L^{1}\big(\left\{f(t)< u<f(T)\right\}\big)\,.
$$
Thus, passing to the limit, as $k \to + \infty$, in inequality~\eqref{feq51c4}, by limits~\eqref{cap4geq15} and~\eqref{cap4geq16} together with point $\mathrm{(3)}$ above, we get
\begin{align}
F(T)-F(t)&\geq \!\!\!\int\limits_{\left\{f(t)< u<f(T)\right\}} \!\!\!\!\mathrm{div}(X) \, d\mu \,\\
&=\!\!\!\int\limits_{\left[f(t),f(T)\right]\setminus \, \mathcal{N}}\!\!\!\!\!\!\frac{\mathcal{C}\,ds}{(1-s)^{2}}\,\Bigg\{\int\limits_{\{u=s\}}\!\!\!\bigg[ \frac{\vert\nabla^{\Sigma}\vert \nabla u\vert \vert^{2}}{\vert \nabla u \vert^{2}}+\frac{\RRR}{2}+\frac{ \vert  \ringg{\mathrm{h}} \vert^{2}}{2} \bigg]\, d\sigma \\
&\phantom{=\!\!\!\int\limits_{\left[f(t),f(T)\right]\setminus \, \mathcal{N}}\!\!\!\!\!\!ds\,\frac{\mathcal{C}}{(1-s)^{2}}\,\Bigg\{}\quad+ \frac{3}{4}\!\!\int\limits_{\{u=s\}}\!\!\!\!\left( \frac{4 u}{1-u^2} \, \vert \nabla u \vert-\mathrm{H}\right)^{\!\!2}\!d\sigma\\
&\phantom{=\!\!\!\int\limits_{\left[f(t),f(T)\right]\setminus \, \mathcal{N}}\!\!\!\!\!\!ds\,\frac{\mathcal{C}}{(1-s)^{2}}\,\Bigg\{}\quad+4\pi -\!\!\!\!\int\limits_{\{u=s\}}\!\!\!\! \frac{\,\,\rm{R}^{\Sigma}}{2} \, d\sigma \Bigg\}\,,
\end{align}
where $\mathcal{N}$ is the set of the critical values of $u$.
Here, the equality follows first by using the coarea formula, then by applying equality~\eqref{div(X)geom} for the divergence of $X$ and finally by Sard theorem.
Since we are integrating only along the regular level sets of $u$ and since every regular level set is a connected (by assumption) closed surface, we can invoke Gauss--Bonnet theorem to deduce that the last two summands also give a nonnegative contribute, while the other summands are always nonnegative, as $\RRR\geq 0$ (by assumption).
The claimed monotonicity of $F$ hence follows. 
\end{proof}

\begin{proposition}[Rigidity -- I]\label{Rigstat1}
Let $(M,g)$ be a $3$--dimensional, complete, one--ended asymptotically flat manifold with compact, connected boundary and with nonnegative scalar curvature.
Let $u\in C^{\infty}(M)$ be the solution of Dirichlet problem~\eqref{f1prelc4} and let $\mathcal{C}>0$ be the boundary capacity of $\partial M$ in $(M,g)$, given by formula~\eqref{boundcapintermofu}. Consider the function $F:[\mathcal{C}/2, + \infty)\to \R$ defined by equality ~\eqref{feq40}.
Then, $F$ is constant on the set $\mathcal{T}$, given by equality~\eqref{defTcap}, if and only if $(M,g)$ is isometric to a (exterior spatial) Schwarzschild manifold $(M_{\mathrm{Sch}(m)},\,g_{\mathrm{Sch}(m)})$ of mass $m>0$, see formula~\eqref{feq63bisbis}.
\end{proposition}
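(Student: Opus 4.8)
The plan is to prove the two implications separately, the direct one by a computation and the converse by extracting pointwise rigidity from the vanishing of $F'$ and reconstructing the metric. For the direct implication, assume $(M,g)$ is the (exterior spatial) Schwarzschild manifold of some mass $m>0$. Then $\mathcal{C}=m$, the potential $u$ solving~\eqref{f1prelc4} is a function of the radial coordinate $|x|$ alone, and the level sets $\Sigma_t$ are the coordinate spheres, which are totally umbilic and on which $|\na u|$ is constant. One checks directly that $\RRR\equiv 0$, $\ringg{\mathrm{h}}\equiv 0$, $\na^{\Sigma_t}|\na u|\equiv 0$, that $\tfrac{4u}{1-u^2}|\na u|-\mathrm{H}$ vanishes identically, and that each $\Sigma_t$ is a round sphere (so the Gauss--Bonnet terms cancel). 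Hence every summand in~\eqref{feq54} vanishes, so $F'\equiv 0$ on $\mathcal{T}$ and $F$ is constant.

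For the converse, suppose $F$ is constant on $\mathcal{T}$. Since $F$ is continuously differentiable on each of the intervals composing $\mathcal{T}$, we get $F'(t)=0$ for every $t\in\mathcal{T}$. In formula~\eqref{feq54} the term $4\pi-\int_{\Sigma_t}\RRR^{\Sigma_t}/2\,d\sigma$ is nonnegative by Gauss--Bonnet (the regular level sets being connected closed surfaces) and the remaining integrand is a sum of nonnegative terms; as their total vanishes, each contribution must vanish. Therefore, on every regular level set $\Sigma_t$ we obtain $\RRR=0$, $\ringg{\mathrm{h}}=0$ (total umbilicity), $\na^{\Sigma_t}|\na u|=0$ (so $|\na u|$ is constant on $\Sigma_t$), the identity $\mathrm{H}=\tfrac{4u}{1-u^2}|\na u|$, and $\int_{\Sigma_t}\RRR^{\Sigma_t}/2\,d\sigma=4\pi$, i.e. $\Sigma_t$ is a topological sphere.

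Next I would upgrade these data to a rotationally symmetric structure on $M\setminus\mathrm{Crit}(u)$. Since the regular level sets are connected and $|\na u|$ is constant on each, we have $|\na u|=\phi(u)$ for a positive function $\phi$ of the regular values; moreover, by~\eqref{H1}, $\mathrm{H}$ is constant on each $\Sigma_t$. Feeding $\RRR=0$ and $|\mathrm{h}|^2=\mathrm{H}^2/2$ into the traced Gauss equation, and computing $\Ric(\nu,\nu)$ through the Bochner formula~\eqref{Bochnerformulacap4} together with~\eqref{feq70} (where now $\na^{\Sigma_t}|\na u|=0$ and $|\na|\na u||=\mathrm{H}|\na u|$), shows that $\RRR^{\Sigma_t}$ is a function of $u$ alone, hence constant on each level set. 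A closed surface of constant Gauss curvature diffeomorphic to $\SSS^2$ is a round sphere, so every regular level set is a round, totally umbilic sphere with constant $|\na u|$; this forces $g$ to be a warped product over an interval of $u$-values.

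Finally I would integrate the resulting ODE system. The umbilicity relation $\mathrm{h}=\tfrac{\mathrm{H}}{2}g^{\Sigma_t}$ with $\mathrm{H}=\tfrac{4u}{1-u^2}\phi$ prescribes how the round metric of the level sets evolves in $u$, and imposing $\RRR=0$ yields a second-order ODE for $\phi$. Using the boundary datum $u=0$ on $\partial M$ (a regular level set by the Hopf lemma) and the asymptotic expansion~\eqref{hexpu} as $u\to 1$ to fix the free constants, the unique solution is exactly the Schwarzschild profile of mass $m=\mathcal{C}$; in particular $\phi$ is positive on all of $[\mathcal{C}/2,+\infty)$, which rules out critical values, gives $\mathrm{Crit}(u)=\emptyset$ and $\mathcal{T}=[\mathcal{C}/2,+\infty)$, and makes the reconstruction global. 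I expect the main obstacle to be precisely this global step: propagating the pointwise rigidity from the full-measure set of regular values to all of $M$, establishing the warped-product form rigorously across the foliation, and excluding critical points, so that the local Schwarzschild identification becomes a genuine isometry of $(M,g)$ with $(M_{\mathrm{Sch}(m)},g_{\mathrm{Sch}(m)})$.
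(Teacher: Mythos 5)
Your direct implication and your extraction of the pointwise identities from $F'\equiv 0$ (vanishing of $\ringg{\mathrm{h}}$, of $\na^{\Sigma_t}|\na u|$, $\RRR=0$, $\mathrm{H}=\tfrac{4u}{1-u^2}|\na u|$) agree with the paper, but the converse direction has a genuine gap, and it sits exactly where you yourself flag ``the main obstacle'': nothing in your blueprint actually excludes critical points or globalizes the rigidity, and the mechanism you propose for this does not work. A first, related problem: you invoke Gauss--Bonnet with ``the regular level sets being connected closed surfaces'', but Proposition~\ref{Rigstat1} does not assume $H_{2}(M,\partial M;\Z)=0$ -- unlike Proposition~\ref{propomonGcap}, connectedness of the regular level sets is not a hypothesis here, and without it $4\pi-\int_{\Sigma_t}\RRR^{\Sigma_t}/2\,d\sigma$ can be negative (each extra spherical component costs $4\pi$), so $F'(t)=0$ would not force the other summands of~\eqref{feq54} to vanish. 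The paper manufactures the needed connectedness instead of assuming it: it takes the maximal $T\in(\mathcal{C}/2,+\infty]$ such that $\na u\neq 0$ on $M_{T}=\big\{0\leq u<(1-\frac{\mathcal{C}}{2T})/(1+\frac{\mathcal{C}}{2T})\big\}$; on this collar every level set is diffeomorphic to the connected boundary $\partial M$, so $F'\geq 0$ and the term-by-term vanishing is legitimate on $[\mathcal{C}/2,T)$ with no topological assumption.

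Second, your plan to rule out critical values by solving a second-order ODE for $\phi$ (where $|\na u|=\phi(u)$) and observing that the Schwarzschild profile is positive is circular: $\phi$ is defined only at regular values, writing $|\na u|=\phi(u)$ with a single function $\phi$ presupposes connected level sets, and since the critical values form a closed null set (possibly with no adjacent interval of regular values), the constant in $|\na u|=a(1-u^{2})^{2}$ could a priori differ on different components of the regular region; positivity of a formula valid at regular values says nothing, by itself, about the absence of critical ones. The missing idea is the paper's continuity-at-the-maximal-time argument: on the connected collar $M_{T}$ one gets $|\na u|=a(1-u^{2})^{2}$ for a single constant $a>0$ (fixed to $a=(4\mathcal{C})^{-1}$ by the asymptotics~\eqref{hexpu},~\eqref{feq48}); if $T<+\infty$, every point of the level $\{u=(1-\frac{\mathcal{C}}{2T})/(1+\frac{\mathcal{C}}{2T})\}$ is a limit of points of $M_{T}$ (a harmonic function has no interior local extrema), whence $|\na u|>0$ on that whole level, contradicting the maximality of $T$. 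Hence $T=+\infty$ and $\mathrm{Crit}(u)=\emptyset$, and only after this do your warped-product and constant-curvature steps -- which otherwise match the paper, except that the paper needs only the first-order identity $\na\log\big(|\na u|/(1-u^{2})^{2}\big)=0$ rather than a second-order ODE with boundary data -- become global and yield the isometry with $\big(M_{\mathrm{Sch}(\mathcal{C})},\,g_{\mathrm{Sch}(\mathcal{C})}\big)$.
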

\begin{proof}
If $(M,g)$ is the Schwarzschild manifold $(M_{\mathrm{Sch}(m)},\,g_{\mathrm{Sch}(m)})$ with mass $m>0$, 
\begin{equation}\label{heq1}
u= \frac{1-\frac{m}{2\vert  x \vert}}{1+\frac{m}{2\vert  x \vert} }\,,\quad\quad\quad \vert \nabla u\vert=\Big(1+\frac{m}{2\vert  x \vert}\Big)^{\!-4}\frac{m}{\vert  x \vert^{2}},\quad \quad\quad \mathrm{H}=\frac{2}{\vert  x \vert}\,\frac{1-\frac{m}{2\vert  x \vert}}{\big( 1+\frac{m}{2\vert  x \vert}\big)^{3}}\,. 
\end{equation}
Notice that $u$ has no critical points.
By a straightforward computation, one has
$$
\mathcal{C}:=\frac{1}{4\pi}\int\limits_{\partial M}\vert \na u \vert \,d\sigma=m\qquad\text{ and }\qquad F\equiv0\,.
$$
Now, we assume that $F$ is constant on the set $\mathcal{T}$.
We know that there exists a maximal time $T\in(\mathcal{C}/2, + \infty]$ such that $\na u \neq 0$ in $M_{T}:=\big\{0\leq u<(1-\frac{\mathcal{C}}{2T})/(1+\frac{\mathcal{C}}{2T})\big\}$, since $\mathcal{T}\supseteq[a, b)$ with $a=\mathcal{C}/2$. 
Then, $\left[\frac{\mathcal{C}}{2},T\right)\subseteq\mathcal{T}$ and $F$ is continuously differentiable in $\left[\frac{\mathcal{C}}{2},T\right)$, with $F'$ given by formula~\eqref{feq54}. 
Observe that for every $t\in\left[\frac{\mathcal{C}}{2},T\right)$, each level set $\{u=(1-\frac{\mathcal{C}}{2t})/(1+\frac{\mathcal{C}}{2t})\}$, being diffeomorphic to the boundary $\partial M$ (as $\partial M=\{u=0\}$), is a connected and closed surface,
then, $F'(t)\geq 0$, as explained before the proof of Proposition~\ref{propomonGcap}, but at the same time $F'(t)=0$, by assumption.
Consequently, all the nonnegative summands in formula~\eqref{feq54} are forced to vanish for every $t \in [\mathcal{C}/2,T)$. This fact gives $\na^{\Sigma_t}|\na u| =\na^{\top}|\na u| = 0$ and $\HHH=\frac{4u}{1-u^{2}}\,|\na u|$, which in turn imply
\begin{equation}\label{cap4geq23}
\na|\na u|=\na^{\perp}|\na u|
=-\HHH\,\na u=-\frac{4u}{1-u^{2}}\,\vert \nabla u \vert\,\na u\,.
\end{equation}
Then,
\begin{align*}
\na\bigg[\!\log\bigg(\frac{\vert \nabla u \vert}{\left(1-u^{2}\right)^{2}}\bigg)\bigg]&=0\,,
\end{align*}
hence, the function $\vert \nabla u \vert/\left(1-u^{2}\right)^{2}$ is constant on every connected component of $M_{T}$, but $M_{T}$ is connected since it is diffeomorphic
to $\left[0,\,(1-\frac{\mathcal{C}}{2T})/(1+\frac{\mathcal{C}}{2T})\right)\times\partial M$ and $\partial M$ is connected.
In conclusion, $|\na u|=a (1-u^{2})^{2}$, where $a\in\R$ is a positive constant, therefore, being $0\leq u<1$ on the whole manifold, $T= +\infty$ and $|\na u| \neq 0$ everywhere. 
Then, $F$ is of class $C^{1}$ on $[\mathcal{C}/2,+\infty)$ and all the level sets of $u$ are regular and diffeomorphic to each other, which clearly implies that they are all connected, hence $F'(t)$ can be written as sum of nonnegative terms, which are forced to vanish, as $F$ is constant, as before.
Concerning the constant $a$, from formulas~\eqref{hexpu} and~\eqref{feq48}, it follows
$$
\mathcal{C}=\lim\limits_{\vert  x\vert\to +\infty}\vert  x\vert^{2} |\na u|= a \lim\limits_{\vert  x\vert\to +\infty}\vert  x\vert^{2}\, (1-u^{2})^{2}=4 a\mathcal{C}^{2}\,,
$$
as a result, $a=(4\mathcal{C})^{-1}$.
Now, up to an isometry, we have that $M=[0,1)\times\partial M$, every slice $\{t\}\times\partial M$ is the level set $\{u=t\}$ and the metric $g$ can be written as
$$
g = \frac{(4\mathcal{C})^{2}}{(1-u^{2})^4} \,du \otimes du +g_{\alpha\beta} (u, \!\vartheta) \, d\vartheta^{\alpha} \otimes d\vartheta^\beta \, ,
$$ 
where $g_{\alpha\beta}(u, \!\vartheta) \, d\vartheta^\alpha \otimes d\vartheta^\beta$ represents the metric induced by $g$ on the level sets of $u$. By the vanishing of the traceless second fundamental form of the level sets in formula~\eqref{feq54}, i.e. $ \mathrm{h}_{\alpha\beta}=(\HHH/2)\, g_{\alpha\beta}$, in combination with equality $\mathrm{h}_{\alpha\beta}=\nabla du_{\alpha\beta}/|\na u|$, it turns out that the coefficients $g_{\alpha\beta}(u,\!\vartheta)$ satisfy the following first order system of PDE's
$$
\frac{\pa g_{\alpha\beta}}{\pa u} = \frac{4u }{1-u^{2}}\, g_{\alpha\beta}\, ,
$$
from which we can deduce
$$
g_{\alpha\beta}(u, \!\vartheta) \, d\vartheta^\alpha \otimes d\vartheta^\beta= (1-u^{2})^{-2} c_{\alpha\beta}(\vartheta)\, d\vartheta^\alpha \otimes d\vartheta^\beta\,.
$$
At the same time, for every $u_{0}\in [0,1)$, we also have
\begin{equation}\label{feq71}
\frac{1}{2}\,\mathrm{R}^{\{u=u_{0}\}}=\frac{(1-u_{0}^{2})^{2}}{4\mathcal{C}^{2}}\,,
\end{equation}
indeed, from the traced Gauss equation together with Bochner formula~\eqref{Bochnerformulacap4} (coupled with the fact that $u$ is harmonic), it follows
\begin{equation}
\mathrm{R}^{\{u=u_{0}\}}=\mathrm{R}+\vert \nabla u\vert^{-2}\left[ -\Delta\vert \nabla u\vert^{2} +2\,\vert \nabla du \vert^{2}\,\right]-\vert \mathrm{h}\vert^{2}+\HHH^{2}\,.
\end{equation}
This equality, by using the vanishing of the scalar curvature of $M$ and of the traceless second fundamental form of the level sets in equality~\eqref{feq54}, along with the consequence of the vanishing of this latter in identity~\eqref{feq70} (take into account also formula~\eqref{cap4geq23}) given by
\begin{equation}
\vert \nabla du \vert^{2}=\frac{3}{2}\,\vert \nabla u \vert ^{2}\,\HHH^{2}\,,
\end{equation}
becomes
\begin{align}
\mathrm{R}^{\{u=u_{0}\}}&=-\vert \nabla u\vert^{-2}\,\Delta\vert \nabla u\vert^{2}+\frac{7\,\HHH^{2}}{2}\,,
\end{align}
but, being
\begin{equation}
\qquad|\na u|=a (1-u^{2})^{2}\qquad\text{and}\qquad \HHH=\frac{4u}{1-u^{2}}\,|\na u|=4a u (1-u^{2})\,,
\end{equation}
with $a=(4\mathcal{C})^{-1}$, as already explained, one obtains identity~\eqref{feq71}.
Then, $\partial M$ has constant sectional curvature (equal to $1/(4\mathcal{C}^{2})\,$), with the Riemannian metric induced by $(M,g)$, and it is diffeomorphic to a $2$--sphere, see Section~\ref{settmasscapacity}.
Consequently, $(\partial M,\,g_{\partial M})$ is isometric to $(\SSS^{2},4\mathcal{C}^{2}\,g_{\mathbb{S}^2})$, by~\cite[Section~3.F]{gahula}, thus, up to an isometry, one has $M=[0,1)\times \SSS^{2}$ and 
\begin{align}
g&= \frac{(4\mathcal{C})^{2}}{(1-u^{2})^4} \,du \otimes du +\frac{4\mathcal{C}^{2}}{(1-u^{2})^{2}}\,g_{\mathbb{S}^2}\,,
\end{align}
therefore, the map
$$
(u,\vartheta)\in\left(M,\,g\right)\mapsto \frac{\mathcal{C}}{2}\,\frac{1+u}{1-u}\,\vartheta\in\left(M_{\mathrm{Sch}(\mathcal{C})},\,g_{\mathrm{Sch}(\mathcal{C})}\right)
$$
is an isometry, see formula~\eqref{feq63bisbis}. 
\end{proof}

The natural next step, after the monotonicity formula has been proven and the case of when it is constant has been studied, is to compute and then to compare the limit of the function $F(t)$ as $t \to+\infty$  with its value at $t = \mathcal{C}/2$,
indeed, $\partial M$ is a regular value of $u$ and the existence of the limit is a consequence of the fact that $F$ is nondecreasing in a neighborhood of $+\infty$, because the level sets $\Sigma_{t}$, given by equality~\eqref{Sigmat}, are regular (that is $t\in\mathcal{T}$) and connected (namely, they are diffeomorphic to a $2$--sphere) for each $t$ sufficiently large, see the notes before the proof of Proposition~\ref{propomonGcap} and Section~\ref{settmasscapacity}.
These computations are contained in the following lemma.

\begin{lemma}\label{Fagliestremi}
Let $(M,g)$ be a $3$--dimensional, complete, one--ended asymptotically flat manifold with compact, connected boundary and with nonnegative scalar curvature.
Let $u\in C^{\infty}(M)$ be the solution of Dirichlet problem~\eqref{f1prelc4} and let $\mathcal{C}>0$ be the boundary capacity of $\partial M$ in $(M,g)$, given by formula~\eqref{boundcapintermofu}. Consider the function $F:[\mathcal{C}/2, + \infty)\to \R$ defined by equality~\eqref{feq40}.
Then, there hold
\begin{align}
F\left(\mathcal{C}/2\right)&=\mathcal{C}\Bigg[2\pi-2\!\int\limits_{\partial M}\!\!\vert \nabla u \vert^{2}\, d\sigma-\!\int\limits_{\partial M}\!\!\vert \nabla u \vert\,\mathrm{H}\,d\sigma\Bigg]\,,\label{feq41}\\
\lim_{t\to+\infty}F(t)&\leq8\pi\left(m_{\mathrm{ADM}}-\mathcal{C}\right)\,.\label{feq42}
\end{align}
\end{lemma}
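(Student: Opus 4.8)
The plan is to treat the two identities separately: the first is a purely algebraic evaluation, while the second is the genuine asymptotic computation in which the ADM mass appears.

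For \eqref{feq41} I would simply set $t=\mathcal{C}/2$ in the defining formula \eqref{feq40}. At this value $\mathcal{C}/(2t)=1$, so that $\Sigma_{\mathcal{C}/2}=\{u=0\}=\partial M$, while the three prefactors collapse to constants: $4\pi t=2\pi\mathcal{C}$, the factor multiplying $\int_{\Sigma_t}|\nabla u|^2\,d\sigma$ becomes $\tfrac{(\mathcal{C}/2)^3}{\mathcal{C}^2}\cdot 2^3\cdot(-2)=-2\mathcal{C}$, and the factor multiplying $\int_{\Sigma_t}|\nabla u|\,\mathrm{H}\,d\sigma$ becomes $-\tfrac{(\mathcal{C}/2)^2}{\mathcal{C}}\cdot 2^2=-\mathcal{C}$. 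Collecting these gives \eqref{feq41} immediately.

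For the limit I would first record that $\lim_{t\to+\infty}F(t)$ exists in $(-\infty,+\infty]$ because $F$ is nondecreasing for large $t$ (the level sets $\Sigma_t$ are eventually regular and diffeomorphic to $\SSS^2$). To organize the computation I would change variable from $t$ to the level value $s=(1-\tfrac{\mathcal{C}}{2t})/(1+\tfrac{\mathcal{C}}{2t})$, i.e. $t=\tfrac{\mathcal{C}}{2}\tfrac{1+s}{1-s}$; the prefactors then simplify and $F$ becomes a combination of $\tfrac{1+s}{1-s}$, of $\int_{\{u=s\}}|\nabla u|^2\,d\sigma$ and of $\int_{\{u=s\}}|\nabla u|\,\mathrm{H}\,d\sigma$ with rational coefficients in $s$. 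A convenient simplification is the exact identity underlying \eqref{cap4geq18}, namely $\tfrac{d}{ds}\int_{\{u=s\}}|\nabla u|^2\,d\sigma=-\int_{\{u=s\}}|\nabla u|\,\mathrm{H}\,d\sigma$ (normal first variation together with harmonicity), which lets me express $F$ entirely through $J(s):=\int_{\{u=s\}}|\nabla u|^2\,d\sigma$ and $J'(s)$. Writing $\epsilon=1-s\to0^+$ and inserting $J=4\pi\epsilon^2+a_3\,\epsilon^3+o(\epsilon^3)$, the $1/\epsilon$ contributions cancel identically and one is left with $\lim_{t\to\infty}F(t)=-8\pi\mathcal{C}-2\mathcal{C}\,a_3$, so the whole problem reduces to the cubic coefficient $a_3$ of $J$.

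Hence the heart of the matter is to show $a_3\ge -4\pi\,m_{\mathrm{ADM}}/\mathcal{C}$, which is exactly \eqref{feq42}. To compute $a_3$ I would use the expansion \eqref{hexpu}, $u=1-\mathcal{C}/r+O_2(r^{-1-\beta})$, together with $g_{ij}=\delta_{ij}+O_2(r^{-\tau})$, to expand $|\nabla u|^2$ and the area element on the nearly spherical level sets, where the coordinate radius and the level value are linked by $r=\mathcal{C}/(1-s)+\dots$. The leading term reproduces $4\pi\epsilon^2$, and the next order is where the metric corrections—through the expansion of the mean curvature of the (approximately coordinate) spheres—assemble into the flux integrand $(\partial_j g_{ij}-\partial_i g_{jj})\tfrac{x^i}{|x|}$ of \eqref{formADMmass}, producing the ADM mass. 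As a consistency check, on a Schwarzschild manifold of mass $m$ (where $\mathcal{C}=m$) the explicit formulas \eqref{heq1} give $J=4\pi\epsilon^2-4\pi\epsilon^3+\dots$, i.e. $a_3=-4\pi$ and $\lim F=0=8\pi(m-\mathcal{C})$, in agreement.

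The main obstacle is precisely this last expansion under the weak decay $\tau>1/2$. The metric and potential corrections enter at relative orders $r^{-\tau}$ and $r^{-\beta}$, which for $\tau,\beta<1$ are \emph{larger} than the order $r^{-1}$ that carries $a_3$; so one must show that, after integration over the approximate spheres and integration by parts, these a priori dominant angular terms cancel and only the convergent combination defining $m_{\mathrm{ADM}}$ survives. Controlling the non-sphericity of the level sets $\Sigma_t$ and the resulting remainders is what forces the conclusion to be stated as the inequality \eqref{feq42} rather than an equality: it is enough to bound the relevant $\limsup$ by the ADM flux, exploiting that $m_{\mathrm{ADM}}$ is defined as a limit of surface integrals, so that the uncontrolled but one-signed remainder terms may be discarded in the favorable direction.
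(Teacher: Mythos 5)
Your evaluation of $F(\mathcal{C}/2)$ is correct and is exactly the paper's (one-line) argument for \eqref{feq41}; likewise your change of variable $s=f(t)$, the first-variation identity $J'(s)=-\int_{\{u=s\}}|\nabla u|\,\mathrm{H}\,d\sigma$ (equivalent to the paper's \eqref{cap4geq18}), and the resulting algebra $\lim_{t\to+\infty}F(t)=-8\pi\mathcal{C}-2\mathcal{C}\,a_3$ are internally consistent, as your Schwarzschild check confirms. The genuine gap is in the step you yourself flag at the end: the reduction to ``the cubic coefficient $a_3$'' presupposes that $J(s):=\int_{\{u=s\}}|\nabla u|^2\,d\sigma$ admits an expansion $4\pi\varepsilon^{2}+a_3\varepsilon^{3}+o(\varepsilon^{3})$ in $\varepsilon=1-s$, and under the paper's decay hypotheses no such expansion exists. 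From \eqref{hexpu} one only gets $|\nabla u|/(1-u)^2=\mathcal{C}^{-1}\bigl(1+O(|x|^{-\beta})\bigr)$ with some $\beta\in(1/2,1)$; since $1-u=\varepsilon$ is constant on each level set, combining this with \eqref{feq44} gives $J(s)=4\pi\varepsilon^{2}\bigl(1+O(\varepsilon^{\beta})\bigr)$, and for $\beta<1$ this sign-indefinite correction of relative order $\varepsilon^{\beta}$ swamps the relative order $\varepsilon$ that would carry $a_3$. So $a_3$ is simply not well defined (it could only be extracted from $\lim F$ itself, which makes the reduction circular); moreover your expansion $J'(s)=-8\pi\varepsilon-3a_3\varepsilon^{2}+o(\varepsilon^{2})$ differentiates an asymptotic expansion, which would be unjustified even if the expansion of $J$ held. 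Finally, the patch you propose --- that the a priori dominant angular terms of orders $\varepsilon^{\beta}$, $\varepsilon^{\tau}$ are ``one-signed'' and may be discarded in the favorable direction --- is false: these terms carry no sign.

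For contrast, the paper never expands $J$; it splits $F=F_{11}+F_{12}+F_{13}+F_{2}$ using the fake distance $\rho$, and the dangerous low-order terms are eliminated by three mechanisms, none of which is a sign argument. First, $F_{13}$ is the integral of a \emph{square} of $\nabla d\rho(\nabla\rho,\nabla\rho)/|\nabla\rho|^{3}=O(|x|^{-1-\beta})$, hence is $O(t^{1-2\beta})\to 0$ precisely because $\beta>1/2$ (quadratic smallness; the paper explicitly remarks that an $o_2(|x|^{-1})$ error would leave an indeterminate form here). Second, in the expansion of $\mathrm{H}_g^2\,d\sigma_g$ the first-order metric corrections regroup, via the identity \eqref{claimdiv}, into a tangential divergence $\mathrm{div}_{\Sigma_t}X^\top$ whose surface integral vanishes exactly by the divergence theorem, plus the Bartnik flux $\delta^{ij}(\partial_i g_{jk}-\partial_k g_{ij})\nu^k$, whose integral over $\{\rho=t\}$ converges to $8\pi m_{\mathrm{ADM}}$. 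Third, the one and only one-signed ingredient is the Euclidean Willmore inequality $\int_{\{\rho=t\}}\mathrm{H}^{2}\,d\sigma\geq 16\pi$ applied to the Euclidean geometry of the level sets, and that alone is the source of the inequality sign in \eqref{feq42}. To close your argument you would have to reproduce the first two cancellation mechanisms (or equivalents); as written, your plan contains no device to kill the $O(\varepsilon^{\beta})$ terms, so the proof does not go through.
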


\begin{proof}
The function $F$ is easily seen to satisfy equality~\eqref{feq41}, now, we check formula~\eqref{feq42}.
As already explained above, there exists $t_{0}\in \left[\mathcal{C}/2,+\infty\right)$ such that $[t_{0},+\infty)\subseteq \mathcal{T}$, therefore, from now on the variable $t$ ranges in $[t_{0},+\infty)$.
We break $F$ in two pieces, 
\begin{align}
F_{1}(t)&=\,\,4\pi t\,\,+\,\,\, \frac{t^{3}}{\mathcal{C}^{2}} \,\left(1+\frac{\mathcal{C}}{2t}\right)^{\!3}\int\limits_{\Sigma_{t}}\vert \nabla u \vert^{2}\, d\sigma\,\,-\,\,\frac{t^{2}}{\mathcal{C}} \,\left(1+\frac{\mathcal{C}}{2t}\right)^{\!2}\int\limits_{\Sigma_{t}}\vert \nabla u \vert\,\mathrm{H}\, d\sigma\,,\quad\,\,\,\label{cap4geq1}\\
F_{2}(t)&=\,-\,\frac{3t^{2}}{2\mathcal{C}} \,\left(1+\frac{\mathcal{C}}{2t}\right)^{\!3} \int\limits_{\Sigma_{t}}\vert \nabla u \vert^{2}\, d\sigma\,,
\end{align}
where the function $F_{2}$ satisfies 
\begin{equation}\label{feq56}
\lim_{t\to+\infty}F_{2}(t)=-6\pi\mathcal{C}\,.
\end{equation}
Indeed, once the function $F_{2}$ is rewritten as
\begin{align}\label{feq57}
F_{2}(t)&=\,-\,\frac{3\mathcal{C}}{2} \,\left(1+\frac{\mathcal{C}}{2t}\right) \int\limits_{\Sigma_{t}}\frac{\vert \nabla u \vert^{2}}{(1-u)^{2}}\, d\sigma\,,
\end{align}
one observes 
\begin{equation}\label{feq48}
\vert \nabla u\vert=\frac{\mathcal{C}}{\,\,\vert x\vert^{2}}\left[\,1+O\big(\vert x\vert^{-\beta}\big)\,\right]\,,
\end{equation}
from the expansion~\eqref{hexpu} of $u$ in an asymptotically flat chart, fixed from now on, thus,
\begin{equation}\label{feq55}
\lim_{\vert x\vert\to + \infty}\frac{\vert \nabla u \vert}{(1-u)^{2}}=\mathcal{C}^{-1} \,.
\end{equation}
Then, using this fact in combination with formula~\eqref{feq44}, one obtains that the integral term in expression~\eqref{feq57} of $F_{2}(t)$ converges to $4\pi$, as $t\to+\infty$, consequently, the limit~\eqref{feq56} holds.\\
Regarding the function $F_{1}$, we introduce the auxiliary function $\rho:M\to[\mathcal{C}/2,+\infty)$, given as
\begin{equation}
\rho:=\frac{\mathcal{C}}{2}\,\frac{1+u}{1-u}\,
\end{equation}
and called {\em Euclidean fake distance}, due to the fact that
\begin{equation}\label{fakedistance}
\rho=\vert x\vert+O_{2}(\vert x\vert^{1-\beta})\,.
\end{equation}
Observe that $\Sigma_{t}=\{\rho=t\}$ and the function $F_{1}$, expressed in terms of $\rho$ as following
\begin{align*}
F_{1}(t)&=\frac{t}{4}\,\left(1+\frac{\mathcal{C}}{2t}\right)\Bigg\{16\pi\left(1+\frac{\mathcal{C}}{2t}\right)^{\!-1}\!\!-\!\int\limits_{\{\rho=t\}}\!\!\!\mathrm{H}^{2}\,d\sigma+\!\int\limits_{\{\rho=t\}}\!\!\!
\left[\frac{g(\nabla \vert \nabla \rho\vert , \nabla \rho)}{ \vert \nabla \rho\vert^{2}}\right]^{2}\!d\sigma\Bigg\}\,,
\end{align*}
can then be broken in three pieces, 
\begin{align}
F_{11}(t)&=-2\pi\mathcal{C}\,,\\
F_{12}(t)&=\frac{t}{4}\,\left(1+\frac{\mathcal{C}}{2t}\right)\bigg[16\pi\,-\!\int\limits_{\{\rho=t\}}\!\!\!\mathrm{H}^{2}\,d\sigma\bigg]\,,\quad\,\,\,\label{cap4geq2}\\
F_{13}(t)&=\frac{t}{4}\,\left(1+\frac{\mathcal{C}}{2t}\right)\int\limits_{\{\rho=t\}}\!\!\!\left[\frac{g(\nabla \vert \nabla \rho\vert , \nabla \rho)}{ \vert \nabla \rho\vert^{2}}\right]^{2}\!\!\!d\sigma\,=\,\frac{1}{4}\,\left(1+\frac{\mathcal{C}}{2t}\right)\int\limits_{\{\rho=t\}}\!\!\!\rho\!\left[\frac{\nabla d\rho(\nabla \rho , \nabla \rho)}{ \vert \nabla \rho\vert^{3}}\right]^{2}\!\!\!d\sigma\,.\label{cap4geq3}
\end{align}
By expansion~\eqref{hexpu} of $u$, there exist positive constants $A_{1},A_{2},B_{1},B_{2}$ such that
\begin{equation}\label{cap4geq4}
\frac {A_{1}}{\vert x\vert}\,\leq\,1-u\,\leq\,\frac {A_{2}}{\vert x \vert}\,\qquad\text{and}\qquad \frac {B_{1}}{\vert x\vert^{2}}\,\leq\,\vert  \nabla u \vert \,\leq\,\frac {B_{2}}{\vert x \vert^{2}}\,.
\end{equation}
for $\vert  x \vert$ large enough. 
Then, for $t$ sufficiently large, one has
\begin{equation}\label{eqf45}
\frac{A_{1} t}{\mathcal{C}}\leq\vert x(p)\vert\leq \frac{2A_{2} t}{\mathcal{C}}
\end{equation}
for every $q\in \Sigma_{t}$, consequently, integrating $|\na u|$ on $\Sigma_t$, by equality~\eqref{feq44} one gets
\begin{equation}\label{eqf46}
\frac{4\pi A_{1}^{2}}{B_{2}\mathcal{C}} \,t^2 \,\,\leq \,\mathrm{Area}(\Sigma_t) \,\leq \frac{16\pi A_{2}^{2}}{B_{1}\mathcal{C}}\,t^2 \,.
\end{equation}
Hence, we have
\begin{equation}
0\leq\int\limits_{\{\rho=t\}}\!\!\!\rho\!\left[\frac{\nabla d\rho(\nabla \rho , \nabla \rho)}{ \vert \nabla \rho\vert^{3}}\right]^{2}\!d\sigma\leq C t^{1-2\beta}\,
\end{equation}
for $t$ large enough, since
\begin{equation}
\frac{\nabla d\rho(\nabla \rho , \nabla \rho)}{ \vert \nabla \rho\vert^{3}}=O(\vert x\vert^{-1-\beta})\,,
\end{equation}
as a consequence of the behavior near infinity of $\rho$, described by formula~\eqref{fakedistance}.
Then, being $\beta>1/2$, one gets the convergence of $F_{13}(t)$ to zero, as $t\to+\infty$.\\
We remark that a key point is the knowledge that the error term in formula~\eqref{hexpu} is $O_{2}(\vert x\vert^{-1-\beta})$, with $\beta>1/2$. Indeed, if this error term was only $o_{2}(\vert x\vert^{-1})$, then the limit of $F_{13}(t)$ would be a indeterminate form.\\
Concerning the function $F_{12}$, observe that
\begin{equation}
\lim_{t\to +\infty}F_{12}(t)\leq\limsup_{t\to +\infty}\,\frac{t}{4} \bigg[16\pi\,-\!\int\limits_{\{\rho=t\}}\!\!\!\mathrm{H}^{2}\,d\sigma\bigg]\,,
\end{equation}
therefore, in the same spirit as in~\cite{HI} and similarly to~\cite[Lemma 2.5]{AMMO}, we are going to show for completeness that the upper limit to the right hand side of the inequality above is less or equal than $8\pi m_{\mathrm{ADM}}$.\\
Our first aim is to obtain an expansion for the mean curvature of $\Sigma_t$ in terms of its Euclidean mean curvature.
In order to do this, it is convenient to write the subscript $g$ for the quantities that are referred to the original metric, and without subscripts the quantities that are referred to the Euclidean metric. 
We underline that this convention will be followed only in this proof.
Finally, the Levi--Civita connection with respect to $g$ will continue to be denoted by $\na$, whereas the symbol $\D$ will indicate the Euclidean one.
As the unit normal vectors to a regular level set $\Sigma_t$ are given by
$$
\nu_g \,= \,\frac{\na \rho \,\,}{|\na \rho|_g} \qquad \hbox{and} \qquad \nu \,= \,\frac{\D \rho}{|\D \rho|} \,, 
$$
the mean curvatures are  
\begin{equation}\label{eqmeancurvatures}
\HHH_g \,= \,\left( g^{ij} - \nu_g^i \nu_g^j \right) \,\frac{\na_i \na_j \rho}{|\na \rho|_g} \qquad \hbox{and} \qquad \HHH \,= \,\left( \delta^{ij} - \nu^i \nu^j \right)\frac{ \D_i \D_j \rho}{|\D \rho|} \,, 
\end{equation}
respectively. According to formula~\eqref{eq4}, one has
\begin{equation}\label{exp2}
g^{ij} = \,\delta^{ij}-\gamma^{ij}+O(\vert  x  \vert^{-2\tau})\,
\end{equation}
where $\gamma_{ij}=g_{ij}-\delta_{ij}$ and $\gamma^{ij} = \delta^{i \ell} \delta^{k j} \gamma_{k \ell}$, hence, the $g$--unit normal is related to the Euclidean one through the formula
\begin{equation}
\nu^i_g \,= \,\Big( 1 + \frac{\gamma(\nu,\nu)}{2}\Big) \,\nu^i - \,\gamma^i_k\,\nu^k \,+ O(|x|^{-2 \tau}) \,,
\end{equation}
where $\gamma^i_k = \delta^{i j} \gamma_{j k}$. It follows then
\begin{equation}
g^{ij} - \nu_g^i \nu_g^j \,\,= \,\,\eta^{i j}\,- \,\eta^{i k}\,\gamma_{k \ell} \,\eta^{j \ell}\,+ \,O(|x|^{-2 \tau}) \,,
\end{equation}
where $\eta^{i j} = \delta^{ij} - \nu^i \nu^j $.
A straightforward computation leads to 
\begin{equation}
\frac{\na_i \na_j \rho}{|\na \rho|_g} \,= \,\Big( 1 + \frac{\gamma(\nu,\nu)}{2}\Big) \,\frac{ \D_i \D_j \rho}{|\D \rho|} \,- \,\frac{1}{2} \,\big( \pa_i g_{j \ell} + \pa_j g_{i \ell} - \pa_\ell g_{ij} \big) \,\nu^\ell + \,O(|x|^{-1-2 \tau}) \,,
\end{equation}
where we employ expansion~\eqref{fakedistance}, to ensure that $|\D \D \rho| / |\D \rho| = O (|x|^{-1})$, indeed, 
\begin{equation}\label{eqf47}
\D \D \rho / |\D \rho|=\frac{1}{\vert x\vert} \left[\eta_{ij}+O(\vert x\vert^{-\beta})\right]dx^{i}\otimes dx^{j}\,,
\end{equation}
setting $\eta_{ij}=\delta_{ij}-\delta_{ik}\,\delta_{j \ell}\,\nu^{k}\,\nu^{\ell}\,$. We then arrive at 
\begin{equation}\label{eqHH}
\HHH_g \,= \,\Big( 1 + \frac{\gamma(\nu,\nu)}{2}\Big) \,\HHH - \eta^{ij} \Big( \pa_j g_{ik} - \frac{1}{2} \pa_k g_{ij} \Big) \nu^k - \eta^{ik} \eta^{j \ell} \gamma_{k \ell} \frac{ \D_i \D_j \rho}{|\D \rho|} + O(|x|^{-1-2 \tau}) \,,
\end{equation}
which implies 
$$
\HHH^2_g \,= \,\big( 1 + {\gamma(\nu,\nu)} \big) \,\HHH^2 - 2 \HHH \,\eta^{ij} \Big( \pa_j g_{ik} - \frac{1}{2} \pa_k g_{ij} \Big) \nu^k - 2\HHH \,\eta^{ik} \eta^{j \ell} \gamma_{k \ell} \frac{ \D_i \D_j \rho}{|\D \rho|} + O(|x|^{-2-2 \tau}) \,.
$$
As the metric induced on $\Sigma_{t}$ by $g$ can be written as $g -  d\rho \otimes d\rho/|\na \rho|^2_g$, the area element can be expressed as 
\begin{equation}
d\sigma_g \,= \,\Big[1+\frac{1}{2}\,\eta^{ij}\gamma_{ij}+O(\vert  x  \vert^{-2\tau})\Big] \,d{\sigma}\,. \label{exp4} 
\end{equation}
Putting all together, the Willmore energy integrand then satisfies
\begin{align}
\HHH^2_g \,d\sigma_g\,\,= \,&\,\left[\Big( 1 + \gamma(\nu,\nu) + \frac{\eta^{ij}\gamma_{ij}}{2} \Big) \,\HHH^2 \,\right.\\
&\left. \quad- \,2 \HHH \,\eta^{ij} \Big( \pa_j g_{ik} - \frac{1}{2} \pa_k g_{ij} \Big) \nu^k - \,2\HHH \,\eta^{ik} \eta^{j \ell} \gamma_{k \ell} \frac{ \D_i \D_j \rho}{|\D \rho|} + O(|x|^{-2-2 \tau}) \right] \,d{\sigma}.
\quad\qquad \label{eq:will}
\end{align}
By equalities~\eqref{eqmeancurvatures} and~\eqref{eqf47} and by using again expansion~\eqref{fakedistance}, we obtain
\begin{equation}\label{eqf48}
{\mathrm{H}}\,= \,\frac{2}{\vert x\vert}\bigl(1+O(\vert x\vert^{-\beta})\bigr) 
\end{equation}
which implies
\begin{equation}\label{eql49}
2\HHH \,\eta^{ik} \eta^{j \ell} \gamma_{k \ell} \frac{ \D_i \D_j \rho}{|\D \rho|} \,= \,\frac{4}{|x|^2} \eta^{k \ell} \gamma_{k \ell} + O(|x|^{-2-2\beta})\,.
\end{equation}
Plugging this information in formula~\eqref{eq:will}, we obtain
\begin{equation}
\HHH^2_g \,d\sigma_g\,\,= \,\,\left[ \,\HHH^2 \,+ \,\frac{4}{|x|^2} \gamma(\nu , \nu) \,- \,\frac{2}{|x|^2} \eta^{i j} \gamma_{i j} \,- \,\frac{4}{|x|} \eta^{ij} \Big( \pa_j g_{ik} - \frac{1}{2} \pa_k g_{ij} \Big) \nu^k + O(|x|^{-2-2 \beta}) \right] \,d{\sigma} \,.
\end{equation}
To proceed, we now claim that 
\begin{equation}
\label{claimdiv}
\frac{4}{|x|^2} \gamma(\nu , \nu) \,- \,\frac{2}{|x|^2} \eta^{i j} \gamma_{i j} \,= \,\frac{2}{|x|} \bigl( \eta^{ij} \pa_i g_{jk} \nu^k - \mathrm{div}_{\Sigma_{t}} X^\top\,\bigr) \,+ \,O(|x|^{-2-2\beta}) \,,
\end{equation}
where $X$ is the vector field defined by $X = \delta^{ij}\gamma_{jk} \nu^k \partial_{i}$ and $X^\top$ denotes its tangential component. To prove the claim, let us first observe that
$$
X =  X^\top + \gamma(\nu,\nu)\nu\qquad\text{ and }\qquad
\pa_i \nu^k \,= \,\eta^{k \ell} \frac{ \D_i \D_\ell \rho}{|\D \rho|} \,.
$$
By means of the expansions~\eqref{eqf48} and~\eqref{eql49}, we compute 
\begin{align}
\eta^{ij} \pa_i g_{jk} \nu^k =\eta^{ij} \pa_i \gamma_{jk} \nu^k & = \,\eta^{ij}\,\delta_{jk} \,\pa_i X^{k} \,- \,\eta^{ik} \eta^{j \ell} \gamma_{k \ell} \frac{ \D_i \D_j \rho}{|\D \rho|}\\
& = \,\mathrm{div}\,X \,- \, \delta_{jk}\pa_i X^{k}\nu^i \nu^j \,- \,\frac{1}{|x|} \eta^{ij} \gamma_{ij} + O(|x|^{-1-2\beta})\\
& = \,\mathrm{div}_{\Sigma_t} X \,-\,\frac{1}{|x|} \eta^{ij} \gamma_{ij} + O(|x|^{-1-2\beta})\\
& = \,\mathrm{div}_{\Sigma_t} X^\top \,+ \,\gamma(\nu,\nu) \HHH - \,\frac{1}{|x|} \eta^{ij} \gamma_{ij} + O(|x|^{-1-2\beta})\\
& = \,\mathrm{div}_{\Sigma_t}X^\top \,+ \,\frac{2}{|x|}\gamma(\nu,\nu) - \,\frac{1}{|x|} \eta^{ij} \gamma_{ij} + O(|x|^{-1-2\beta}) \,.
\end{align}
Claim~\eqref{claimdiv} then follows with the help of some simple algebra. As a consequence, the expression for the Willmore energy integrand becomes
\begin{align}
\HHH^2_g \,d\sigma_g\,\,= \,\,& \left[ \,\HHH^2 - \frac{2}{|x|} \mathrm{div}_{\Sigma_{t}} X^\top + \frac{2}{|x|} \eta^{ij} \pa_i g_{jk} \nu^k - \frac{4}{|x|} \eta^{ij} \pa_j g_{ik}\nu^k + \frac{2}{|x|}\eta^{ij} \pa_k g_{ij}\nu^k+ O(|x|^{-2-2 \beta}) \right] d{\sigma}\\
= \,\,& \left[ \,\HHH^2 - \frac{2}{|x|} \mathrm{div}_{\Sigma_{t}} X^\top - \frac{2}{|x|} \delta^{ij} \big( \pa_i g_{jk} - \pa_k g_{ij} \big) \nu^k + O(|x|^{-2-2 \beta}) \right] d{\sigma} \,. \label{eq:nice}
\end{align}
Hence, one gets
\begin{align}
\frac{t}{4}\,\,\bigg( 16\pi \,\,- \!\int\limits_{\{\rho=t\}}\!\!\!\mathrm{H}_g^{2}\,d\sigma_g \bigg)&\,=\,\frac{t}{4}\,\,\bigg( 16\pi \,\,- \!\int\limits_{\{\rho=t\}}\!\!\!\mathrm{H}^{2}\,d\sigma \bigg) \\
&\,+ \,\frac{1}{4} \int\limits_{\{\rho=t\}}\!\!\! \rho\left[\,\frac{2}{|x|} \mathrm{div}_{\Sigma_{t}} X^\top +\frac{2}{|x|} \delta^{ij} \big( \pa_i g_{jk} - \pa_k g_{ij} \big) \nu^k + O(|x|^{-2-2 \beta})\right]\,d\sigma  \,.
\end{align}
By virtue of equation~\eqref{exp4}, the same estimates of formula~\eqref{eqf46} hold for the Euclidean area $\mathrm{Area}(\Sigma_t)$, up to a different choice of the constants. 
In view of this consideration and employing the expansion~\eqref{fakedistance} with formula~\eqref{eqf45}, one has
\begin{align*}
\frac{t}{4}\,\,\bigg( 16\pi \,\,- \!\int\limits_{\{\rho=t\}}\!\!\!\mathrm{H}_g^{2}\,d\sigma_g \bigg)&\,=\,\frac{t}{4}\,\,\bigg( 16\pi \,\,- \!\int\limits_{\{\rho=t\}}\!\!\!\mathrm{H}^{2}\,d\sigma \bigg) \\
&\,+ \,\frac{1}{2} \int\limits_{\{\rho=t\}} \mathrm{div}_{\Sigma_{t}}X^\top\,d\sigma + \,\frac{1}{2} \int\limits_{\{\rho=t\}} 
\delta^{ij} \big( \pa_i g_{jk} - \pa_k g_{ij} \big) \nu^k \,d\sigma + O(t^{1-2\beta}) \,.
\end{align*}
The first summand in the right hand side is nonpositive by the {\em Euclidean Willmore inequality} (see~\cite{Will}), the second summand vanishes by the divergence theorem and it is well known (see~\cite[Proposition~4.1]{Bartnik}) that the third summand tends to $8 \pi m_{\rm ADM}$, as $t \to + \infty$. The starting statement then follows, as $\beta>1/2$.
In conclusion
$$
F=F_{11}(t)+F_{12}(t)+F_{13}(t)+F_{2}(t)\,,
$$
where
\begin{equation}
F_{11}(t)=-2\pi\mathcal{C}\,,\quad\,\,\lim_{t\to+\infty}F_{12}(t)\leq 8\pi m_{\mathrm{ADM}}\,,\quad \,\,\lim_{t\to+\infty}F_{13}(t)=0\,,\quad\,\, \lim_{t\to+\infty}F_{2}(t)=-6\pi\mathcal{C}\,,
\end{equation}
hence formula~\eqref{feq42} is proved.
\end{proof}

We are now in the position to state and prove the main result of this section.

\begin{theorem}\label{teo1 lower bound differenza tra il rapporto massa cap e una normalizzazione di Willmore energy}
Let $(M,g)$ be a $3$--dimensional, complete, one--ended asymptotically flat manifold with compact, connected boundary and with nonnegative scalar curvature.
Assume that $H_{2}(M,\partial M;\Z)=0$.
Let $u\in C^{\infty}(M)$ be the solution of Dirichlet problem~\eqref{f1prelc4} and let $\mathcal{C}>0$ be the boundary capacity of $\partial M$ in $(M,g)$, given by formula~\eqref{boundcapintermofu}. Then,
\begin{equation}\label{lower bound differenza tra il rapporto massa cap e una normalizzazione di Willmore energy}
\frac{m_{\mathrm{ADM}}}{\mathcal{C}}\,\geq\,\frac{5}{4}\,+\,\frac{1}{64\pi}\!\int\limits_{\partial M}\!\mathrm{H}^{2}\,d\sigma\,-\,\frac{1}{4\pi}\!\int\limits_{\partial M}\!\bigg(\vert \nabla u \vert\,+\,\frac{\mathrm{H}}{4}\bigg)^{2}\, d\sigma\,,
\end{equation}
with equality if and only if $(M,g)$ is isometric to a (exterior spatial) Schwarzschild manifold of mass $m>0$, see formula~\eqref{feq63bisbis}.
\end{theorem}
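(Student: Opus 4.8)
The plan is to pinch the monotone function $F$ between its boundary value and its limit at infinity, and then read off the inequality by elementary algebra. Since $H_{2}(M,\partial M;\Z)=0$, all regular level sets of $u$ are connected closed surfaces (see Section~\ref{settmasscapacity}), so the hypothesis of Proposition~\ref{propomonGcap} is satisfied and $F$ is nondecreasing on $\mathcal{T}$. Because $\partial M=\{u=0\}$ is a regular level set, the endpoint $t=\mathcal{C}/2$ belongs to $\mathcal{T}$, and the limit of $F$ at $+\infty$ exists as $F$ is eventually monotone along the (spherical) level sets near infinity. Monotonicity together with estimate~\eqref{feq42} therefore gives
\begin{equation}
F(\mathcal{C}/2)\,\leq\,\lim_{t\to+\infty}F(t)\,\leq\,8\pi\,(m_{\mathrm{ADM}}-\mathcal{C})\,.
\end{equation}

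Next I would substitute the explicit value of $F(\mathcal{C}/2)$ furnished by formula~\eqref{feq41}, divide by $8\pi\mathcal{C}>0$, and isolate $m_{\mathrm{ADM}}/\mathcal{C}$, obtaining
\begin{equation}
\frac{m_{\mathrm{ADM}}}{\mathcal{C}}\,\geq\,\frac{5}{4}-\frac{1}{4\pi}\!\int\limits_{\partial M}\!\vert\na u\vert^{2}\,d\sigma-\frac{1}{8\pi}\!\int\limits_{\partial M}\!\vert\na u\vert\,\mathrm{H}\,d\sigma\,.
\end{equation}
Completing the square through the identity $(\vert\na u\vert+\mathrm{H}/4)^{2}=\vert\na u\vert^{2}+\tfrac12\,\vert\na u\vert\,\mathrm{H}+\mathrm{H}^{2}/16$ rewrites the right hand side precisely as $\tfrac54+\tfrac{1}{64\pi}\int_{\partial M}\mathrm{H}^{2}\,d\sigma-\tfrac{1}{4\pi}\int_{\partial M}(\vert\na u\vert+\mathrm{H}/4)^{2}\,d\sigma$, which is exactly inequality~\eqref{lower bound differenza tra il rapporto massa cap e una normalizzazione di Willmore energy}.

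For the characterization of equality, I would note that equality in the theorem forces equality throughout the displayed chain; in particular $F(\mathcal{C}/2)=\lim_{t\to+\infty}F(t)$. Since $F$ is nondecreasing on $\mathcal{T}$, which differs from $[\mathcal{C}/2,+\infty)$ only by a set of measure zero, this compels $F$ to be constant on $\mathcal{T}$, and Proposition~\ref{Rigstat1} then identifies $(M,g)$ with a Schwarzschild manifold of mass $m>0$. Conversely, on Schwarzschild one has $F\equiv0$ together with $m_{\mathrm{ADM}}=\mathcal{C}=m$, so both inequalities in the chain are equalities and the statement holds with equality.

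I do not expect any genuine obstacle here: all the analytic weight, namely surviving the critical values of $u$ in the monotonicity, the asymptotic expansion of the Willmore energy at infinity, and the rigidity analysis, has already been carried by Proposition~\ref{propomonGcap}, Lemma~\ref{Fagliestremi} and Proposition~\ref{Rigstat1}. The only minor checks are verifying that the completion of squares reproduces the stated coefficients exactly and that the monotonicity chain may legitimately be pinched to recover the equality case.
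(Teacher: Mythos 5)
Your proposal is correct and follows essentially the same route as the paper: the monotonicity of $F$ from Proposition~\ref{propomonGcap} is pinched between the boundary value~\eqref{feq41} and the asymptotic bound~\eqref{feq42} of Lemma~\ref{Fagliestremi}, the inequality is then rewritten by completing the square, and rigidity is obtained from Proposition~\ref{Rigstat1}. Your explicit verification that equality forces $F(\mathcal{C}/2)=\lim_{t\to+\infty}F(t)$, hence $F$ constant on $\mathcal{T}$, together with the converse check on Schwarzschild, merely spells out what the paper leaves implicit in its final appeal to Proposition~\ref{Rigstat1}.
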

\begin{proof}
The assumption $H_{2}(M,\partial M;\Z)=0$ implies that all regular level sets of $u$ are connected, as said at the end of Section~\ref{settmasscapacity}, therefore, the function $F:[\mathcal{C}/2, + \infty)\to \R$, defined by formula~\eqref{feq40}, is nondecreasing on the set $\mathcal{T}$, given by equality~\eqref{defTcap}, by Proposition~\ref{propomonGcap}.
Hence, comparing the limit of the function $F(t)$ as $t \to+\infty$  with its value at $t = \mathcal{C}/2$, from Lemma~\ref{Fagliestremi} with the help of a small manipulation we obtain  
\begin{equation}\label{feq72}
\frac{m_{\mathrm{ADM}}}{\mathcal{C}}-1\geq \frac{1}{4}-\frac{1}{4\pi}\,\Bigg[\,\,\int\limits_{\partial M}\!\!\vert \nabla u \vert^{2}\, d\sigma+\frac{1}{2}\int\limits_{\partial M}\!\!\vert \nabla u \vert\,\mathrm{H}\,d\sigma\Bigg]\,.
\end{equation}
By adding and subtracting the term $(1/16)\!\int_{\partial M}\!\mathrm{H}^{2}\,d\sigma$ in the square brackets, one gets inequality~\eqref{lower bound differenza tra il rapporto massa cap e una normalizzazione di Willmore energy}.
Finally, the equivalence, i.e. the last part of the statement, follows from Proposition~\ref{Rigstat1}.
\end{proof}

\section{Second monotonicity formula}\label{SecMonforGandRigstat2}

The main result of the previous section provides an improvement of  Bray sharp mass--capacity inequality when the term on the right hand side of inequality~\eqref{lower bound differenza tra il rapporto massa cap e una normalizzazione di Willmore energy} is greater or equal to $1$, or equivalently, the right term of inequality~\eqref{feq72} is nonnegative.
In order to prove the latter, we find a second monotonicity formula, which, however, does not hold in the same setting of Theorem~\ref{teo1 lower bound differenza tra il rapporto massa cap e una normalizzazione di Willmore energy}, indeed, an assumption involving the mean curvature of the boundary is required. A key point of the followed strategy is to obtain, starting from our first monotonicity formula, a certain inequality along the regular values of $u$, therefore, also in this case, the main difficulty consists in ensuring that this inequality holds in presence of the critical
values of $u$, but this time the problem is overcome by means of a regularity result.
This regularity result and the proof of our second monotonicity formula are contained in the following proposition, while in the next we analyze the case of when the latter is constant.

\begin{proposition}\label{propG}
Let $(M,g)$ be a $3$--dimensional, complete, one--ended asymptotically flat manifold with compact, connected boundary and with nonnegative scalar curvature.
Let $u\in C^{\infty}(M)$ be the solution of Dirichlet problem~\eqref{f1prelc4} and let $\mathcal{C}>0$ be the boundary capacity of $\partial M$ in $(M,g)$, given by formula~\eqref{boundcapintermofu}.
Consider the function $G:[\mathcal{C}/2, + \infty)\to \R$ defined as
\begin{equation}\label{feq37}
G(t)= -\,\frac{\pi\mathcal{C}^{2}}{t}+\frac{t}{4} \,\left(1+\frac{\mathcal{C}}{2t}\right)^{\!4} \int\limits_{\Sigma_{t}}\vert \nabla u \vert^{2}\, d\sigma\,,
\end{equation}
where $\Sigma_{t}$ is the level set of $u$, given by
$$
\Sigma_{t}:=\Big\{u=\Big(1-\frac{\mathcal{C}}{2t}\Big)/\Big(1+\frac{\mathcal{C}}{2t}\Big)\Big\}\,,
$$ 
and $\sigma$ is the $2$--dimensional Hausdorff measure of $(M,g)$.
Then, the function $G$ satisfies
\begin{align}
G\left(\mathcal{C}/2\right)&=\,-\,2\mathcal{C}\Bigg[\pi-\int\limits_{\partial M}\vert \nabla u \vert^{2}\, d\sigma\Bigg]\,,\label{feq38}\\ 
\lim_{t\to+\infty}G(t)&=0\,.\label{feq39}
\end{align}
It is continuously differentiable on the set $\mathcal{T}$, given by identity~\eqref{defTcap}, with
\begin{equation}\label{G'}
G'(t)=\frac{\pi\mathcal{C}^{2}}{t^{2}} +\frac{1}{4}\,\left(1+\frac{\mathcal{C}}{2t}\right)^{\!3}\left(1-\frac{3\mathcal{C}}{2t}\right)\int\limits_{\Sigma_{t}}\vert \nabla u \vert^{2}\, d\sigma-\frac{\mathcal{C}}{4t}\left(1+\frac{\mathcal{C}}{2t}\right)^{\!2}\!\int\limits_{\Sigma_{t}}\vert \nabla u \vert\,\mathrm{H}\, d\sigma\,
\end{equation}
for every $t\in\mathcal{T}$,
and admits a locally absolutely continuous representative in $\left[\mathcal{C}/2,+\infty\right)$, (coinciding with it on $\mathcal{T}$). 
Finally, if all the regular level sets of $u$ are connected and there exists $$\alpha\in\big(\!-(2\mathcal{C})^{-1},\,(2\mathcal{C})^{-1}\big]$$ such that 
\begin{equation}\label{Assonmeancurvatureonboundary}
\,\,\,\,\,\qquad \HHH\leq \alpha\big(1-4\mathcal{C}\vert  \nabla u \vert\,\big)\quad\text{on}\,\,\partial M\,,
 \end{equation}
then $G$ is nondecreasing on the set $\mathcal{T}$.
\end{proposition}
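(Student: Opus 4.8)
The plan is to bind $G$ to the first monotonicity formula through the elementary identity
\[
G'(t)=\frac{\mathcal C^{2}}{4t^{3}}\,F(t),\qquad t\in\mathcal T,
\]
and then to push the monotonicity of $F$ onto $G$. I would first settle the three structural assertions. Evaluating \eqref{feq37} at $t=\mathcal C/2$, where $\big(1+\tfrac{\mathcal C}{2t}\big)^{4}=16$ and $\Sigma_{\mathcal C/2}=\{u=0\}=\partial M$, gives \eqref{feq38} immediately. For the limit \eqref{feq39}, using $(1-u)^{-2}=\tfrac{t^{2}}{\mathcal C^{2}}\big(1+\tfrac{\mathcal C}{2t}\big)^{2}$ on $\Sigma_{t}$ I would rewrite
\[
G(t)=-\frac{\pi\mathcal C^{2}}{t}+\frac{\mathcal C^{2}}{4t}\Big(1+\frac{\mathcal C}{2t}\Big)^{\!2}\int_{\Sigma_{t}}\frac{|\nabla u|^{2}}{(1-u)^{2}}\,d\sigma ,
\]
and then, exactly as for the term $F_{2}$ in the proof of Lemma~\ref{Fagliestremi}, invoke the asymptotics \eqref{feq55} and the normalisation \eqref{feq44} to see that the integral tends to $4\pi$, so that both summands vanish as $t\to+\infty$. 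The formula \eqref{G'} for $G'$ on $\mathcal T$ is obtained by differentiating \eqref{feq37} and inserting the first variations \eqref{cap4geq18}; a one-line rearrangement then produces the displayed identity, which is the engine of the whole argument.

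The step I expect to be the main obstacle is showing that $G$ (equivalently, the two level-set integrals $\int_{\Sigma_{t}}|\nabla u|^{2}\,d\sigma$ and $\int_{\Sigma_{t}}|\nabla u|\,\mathrm H\,d\sigma$) extends to a locally absolutely continuous function on all of $[\mathcal C/2,+\infty)$ — i.e. that the fundamental theorem of calculus survives the critical values of $u$. On $\mathcal T$ the function $G$ is $C^{1}$, and I would first record that $G'\in L^{1}_{loc}$: by the coarea formula the two integrals are locally integrable in $t$, because the bulk integrands $|\nabla u|^{3}$ and $|\nabla u|\,|\nabla du|$ are integrable on the compact sublevel sets $\{u\le s\}$, $s<1$ (using \eqref{eq:hdubound1} for the second). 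The delicate point is the continuity across $\mathrm{Crit}(u)$; here I would exploit that $\mathrm{Crit}(u)$ is compact with finite $\mathcal H^{1}$-measure and that $|\nabla u|$ vanishes on it, and run a cut-off regularisation — in the spirit of the proof of Proposition~\ref{propomonGcap}, but retaining the gradient-of-cutoff terms and showing that they vanish in the limit — to establish $G(t_{2})-G(t_{1})=\int_{t_{1}}^{t_{2}}G'(\tau)\,d\tau$ for all $\mathcal C/2\le t_{1}<t_{2}$. This furnishes the locally absolutely continuous representative agreeing with $G$ on $\mathcal T$ and reduces the monotonicity to a pointwise sign of $G'$.

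With the regularity in hand, monotonicity follows from the sign of $G'$. Since $\mathcal C^{2}/4t^{3}>0$, the identity gives $G'\ge 0$ on $\mathcal T$ if and only if $F\ge 0$ there; because all regular level sets are connected, Proposition~\ref{propomonGcap} makes $F$ nondecreasing on $\mathcal T$, whose minimum is attained at $\mathcal C/2$. Hence it suffices to prove $F(\mathcal C/2)\ge 0$, that is, by \eqref{feq41},
\[
2\pi\ \ge\ 2\int_{\partial M}|\nabla u|^{2}\,d\sigma+\int_{\partial M}|\nabla u|\,\mathrm H\,d\sigma .
\]
This is where assumption \eqref{Assonmeancurvatureonboundary} enters: multiplying $\mathrm H\le\alpha\big(1-4\mathcal C|\nabla u|\big)$ by $|\nabla u|\ge 0$, integrating, and using $\int_{\partial M}|\nabla u|\,d\sigma=4\pi\mathcal C$, I would obtain
\[
F(\mathcal C/2)\ \ge\ 2\mathcal C\,(1-2\alpha\mathcal C)\Big(\pi-\int_{\partial M}|\nabla u|^{2}\,d\sigma\Big),
\]
where the factor $1-2\alpha\mathcal C$ is nonnegative since $\alpha\le(2\mathcal C)^{-1}$. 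The claim thus rests on the inequality $\int_{\partial M}|\nabla u|^{2}\,d\sigma\le\pi$, which is sharp on Schwarzschild and which I expect to be exactly the place where the admissibility of \eqref{Assonmeancurvatureonboundary} — the compatibility of the mean-curvature bound with the boundary relation between $\mathrm H$ and $|\nabla u|$ forced by the potential — has to be used. Once $F(\mathcal C/2)\ge 0$ is secured, one has $F\ge 0$ on $\mathcal T$, hence $G'\ge 0$ almost everywhere, and the absolute continuity upgrades this to $G$ being nondecreasing on $\mathcal T$.
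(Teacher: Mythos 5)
Your structural work is sound and matches the paper: the evaluation \eqref{feq38}, the limit \eqref{feq39} via the rewriting in terms of $\vert\nabla u\vert/(1-u)$ and \eqref{feq44}, the derivative formula \eqref{G'} from \eqref{cap4geq18}, the identity $G'(t)=\tfrac{\mathcal{C}^{2}}{4t^{3}}F(t)$ (which is exactly \eqref{eqf50}), and the cut--off regularisation for the $W^{1,1}_{loc}$ property (the paper tests $G_{1}$ against $\chi\in C^{\infty}_{c}$, uses the coarea formula and $\eta_{k}(\vert\nabla u\vert^{2})$, and kills the $\eta_{k}'$--terms with the bound $\big\vert\eta_{k}'(\vert\nabla u\vert^{2})\,\vert\nabla u\vert^{2}\,g(\nabla\vert\nabla u\vert,\nabla u)\big\vert\leq 3^{3/2}(2k)^{-1/2}\,\vert\nabla du\vert$, precisely the ``retained gradient--of--cutoff terms vanish'' step you anticipate). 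The reduction of monotonicity to $F(\mathcal{C}/2)\geq 0$ and your inequality $F(\mathcal{C}/2)\geq(1-2\mathcal{C}\alpha)\big(2\mathcal{C}\big[\pi-\int_{\partial M}\vert\nabla u\vert^{2}\,d\sigma\big]\big)$ also coincide with the paper's \eqref{feq63}, in the notation $\mathcal{B}\geq(1-2\mathcal{C}\alpha)\mathcal{A}$.

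However, there is a genuine gap at exactly the point you flag: you leave unproven the inequality $\int_{\partial M}\vert\nabla u\vert^{2}\,d\sigma\leq\pi$, i.e.\ $\mathcal{A}\geq 0$, and the hope that it follows from some separate ``admissibility'' of \eqref{Assonmeancurvatureonboundary} is misplaced --- no direct boundary argument is available, and without $\mathcal{A}\geq 0$ your chain collapses (if $\mathcal{A}<0$, the bound $(1-2\mathcal{C}\alpha)\mathcal{A}$ is useless). The missing idea is a bootstrap through the behaviour of $G$ at infinity, which is where the regularity you established is actually consumed. From $F(t)\geq F(\mathcal{C}/2)=\mathcal{B}$ on $\mathcal{T}$ one gets $G'(t)\geq\tfrac{\mathcal{C}^{2}}{4t^{3}}\,\mathcal{B}$ a.e.\ in $[\mathcal{C}/2,+\infty)$ (as $\mathcal{T}$ has full measure), and integrating the locally absolutely continuous representative from $\mathcal{C}/2$, using $G(\mathcal{C}/2)=-\mathcal{A}$ and $\mathcal{B}\geq(1-2\mathcal{C}\alpha)\mathcal{A}$, yields
\begin{equation}
G(t)\,\geq\,-\,\frac{\mathcal{C}^{2}\mathcal{B}}{8t^{2}}\,-\,\frac{(1+2\mathcal{C}\alpha)\,\mathcal{A}}{2}
\end{equation}
for all $t\in\mathcal{T}$. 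Letting $t\to+\infty$ along $[t_{0},+\infty)\subseteq\mathcal{T}$ and invoking \eqref{feq39} forces $(1+2\mathcal{C}\alpha)\,\mathcal{A}\geq 0$, and since $\alpha>-(2\mathcal{C})^{-1}$ gives $1+2\mathcal{C}\alpha>0$, one concludes $\mathcal{A}\geq 0$; then $\alpha\leq(2\mathcal{C})^{-1}$ gives $\mathcal{B}\geq(1-2\mathcal{C}\alpha)\mathcal{A}\geq 0$, whence $G'\geq 0$ on $\mathcal{T}$. Note that this is the only place where the \emph{lower} endpoint of the interval for $\alpha$ is used; in your proposal that hypothesis never plays a role, which is itself a sign that the closing step is absent. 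So the inequality $\int_{\partial M}\vert\nabla u\vert^{2}\,d\sigma\leq\pi$ is a \emph{consequence} of the monotonicity scheme plus $\lim_{t\to+\infty}G(t)=0$, not an input to be verified independently.
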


Notice that the function $G$ is well defined, as the integrand function is bounded on each level set of $u$ and each level set of $u$ has finite $\sigma$--measure.

\begin{proof}
The function $G$ is easily seen to satisfy equality~\eqref{feq38}. 
Concerning limit~\eqref{feq39}, it is convenient to define the function $G_{1}$ as the second summand in definition~\eqref{feq37} of the function $G$, and to rewrite it as
$$
G_{1}(t)=\frac{\mathcal{C}}{4} \,\left(1+\frac{\mathcal{C}}{2t}\right)^{\!3} \!\!\!\!\int\limits_{\left\{u=f(t)\right\}}\!\!\!\!\frac{\vert \nabla u \vert}{1-u}\,\,\vert \nabla u \vert\, d\sigma\,,
$$
where $f:[\mathcal{C}/2, + \infty)\to[0,1)$ is the diffeomorphism defined by equality~\eqref{cap4f}.
By formulas~\eqref{hexpu} and~\eqref{feq48}, we have 
$$
\lim_{\vert x\vert\to + \infty}\frac{\vert \nabla u \vert}{1-u}= 0 \, ,
$$
therefore, $G_{1}(t)$ tends to zero for $t\to+\infty$, by applying an argument similar to the one leading to limit~\eqref{feq56}, thus, limit~\eqref{feq39} follows.\\
In absence of critical points, the function $G$ is everywhere continuously differentiable in its interval of definition, with first derivative given by
\begin{align*}
G'(t)\,=\,\frac{\pi\mathcal{C}^{2}}{t^{2}} \,+\,\frac{1}{4}\,\left(1+\frac{\mathcal{C}}{2t}\right)^{\!3}\left(1-\frac{3\mathcal{C}}{2t}\right)\int\limits_{\Sigma_{t}}\vert \nabla u \vert^{2}\, d\sigma\,-\,\frac{\mathcal{C}}{4t}\left(1+\frac{\mathcal{C}}{2t}\right)^{\!2}\!\int\limits_{\Sigma_{t}} \vert \nabla u \vert\,\mathrm{H}\, d\sigma\,,
\end{align*}
keeping into account formula~\eqref{cap4geq18}.\\
In presence of critical points, $G$ is continuously differentiable only on the set $\mathcal{T}$, with first derivative given as above.
In order to obtain the existence of its locally absolutely continuous representative, let us then show that $G_{1}\in W^{1,1}_{loc}(\mathcal{C}/2,+\infty)$.
Notice that $G_{1}\in L^{1}_{loc}(\mathcal{C}/2,+\infty)$, for instance by the coarea formula. 
Let $\chi\in C_{c}^{\infty}\big((\mathcal{C}/2,+\infty)\big)$. Then, one has
\begin{align}
\int\limits_{\mathcal{C}/2}^{+\infty}\!\chi'(t)\, G_{1}(t)dt&=\int\limits_{\mathcal{C}/2}^{+\infty}dt\,\bigg[\,\chi'(t)\,\frac{t}{4} \,\left(1+\frac{\mathcal{C}}{2t}\right)^{\!4} \!\!\!\int\limits_{\left\{u=f(t)\right\}}\!\!\!\!\vert \nabla u \vert^{2}\, d\sigma\bigg]\\
&=\int\limits_{0}^{1} ds \!\!\!\int\limits_{\left\{u=s\right\}}\!\!\!\!\chi'\!\left(\frac{\mathcal{C}}{2}\,\frac{1+u}{1-u}\right)\,\frac{2\mathcal{C}^{2}}{(1-u^{2})^{3}}\,\vert \nabla u \vert^{2}\, d\sigma\\
&=\int\limits_{M}\chi'\!\left(\frac{\mathcal{C}}{2}\,\frac{1+u}{1-u}\right)\,\frac{2\mathcal{C}^{2}}{(1-u^{2})^{3}}\,\vert \nabla u \vert^{3}\, d\mu\\
&=\lim\limits_{k\to+\infty}\int\limits_{M}\chi'\!\left(\frac{\mathcal{C}}{2}\,\frac{1+u}{1-u}\right)\,\frac{2\mathcal{C}^{2}}{(1-u^{2})^{3}}\,\eta_{k}(\vert \nabla u \vert^{2})\,\vert \nabla u \vert^{3}\, d\mu\\
&=\lim\limits_{k\to+\infty}\int\limits_{M}g\!\left(\nabla\!\left[\chi\!\left(\frac{\mathcal{C}}{2}\,\frac{1+u}{1-u}\right)\right] ,\frac{2\mathcal{C}\vert \nabla u \vert}{(1-u)(1+u)^{3}}\,\eta_{k}(\vert \nabla u \vert^{2}) \na u\!\right)\!d\mu\\
&=-\lim\limits_{k\to+\infty}\int\limits_{M}\chi\!\left(\frac{\mathcal{C}}{2}\,\frac{1+u}{1-u}\right)\mathrm{div}\!\left(\!\frac{2\mathcal{C}\vert \nabla u \vert}{(1-u)(1+u)^{3}}\,\eta_{k}(\vert \nabla u \vert^{2}) \na u\!\right)\!d\mu\,.\quad\label{cap4geq19}
\end{align}
Here, the third equality is a consequence of the coarea formula, the fourth follows by the dominate convergence theorem, since
the sequence of the functions $\eta_{k}(\vert \nabla u \vert^{2})$ converges pointwise on $M$ to the function $\mathbb{I}_{M\setminus\mathrm{Crit}(u)}$ and 
\begin{align*}
\bigg\vert\chi'\!\left(\frac{\mathcal{C}}{2}\,\frac{1+u}{1-u}\right)\,\frac{2\mathcal{C}^{2}}{(1-u^{2})^{3}}\,\eta_{k}(\vert \nabla u \vert^{2})\,\vert \nabla u \vert^{3} \bigg\vert\leq \frac{2\mathcal{C}^{2}\!\parallel\! \chi'\!\parallel_{L^\infty(\mathcal{C}/2,+\infty)}\,}{(1-u^{2})^{3}}\,\vert \nabla u \vert^{3}\in L^{1}_{loc}(M)\,,
\end{align*}
finally, the last equality is obtained by the properties of the divergence operator combined with the divergence theorem applied to the vector field
$$
\chi\Big(\frac{\mathcal{C}}{2}\,\frac{1+u}{1-u}\Big)\frac{2\mathcal{C}\vert \nabla u \vert}{(1-u)(1+u)^{3}}\,\eta_{k}(\vert \nabla u \vert^{2})\na u
$$
in the domain $E_{a}^{b}:=\left\{f(a)< u<f(b)\right\}$, for $a,b\in\mathcal{T}$ such that $\mathrm{supp}\chi\subseteq(a,b)$.
We observe that
\begin{align}
\int\limits_{M}&\chi\!\left(\frac{\mathcal{C}}{2}\,\frac{1+u}{1-u}\right)\mathrm{div}\!\left(\!\frac{2\mathcal{C}\vert \nabla u \vert}{(1-u)(1+u)^{3}}\,\eta_{k}(\vert \nabla u \vert^{2}) \na u\!\right)\!d\mu\\
&=\!\int\limits_{E_{a}^{b}}\!\chi\!\left(\frac{\mathcal{C}}{2}\,\frac{1+u}{1-u}\right)\eta_{k}(\vert \nabla u \vert^{2})\,\frac{\mathcal{C}\vert \nabla u \vert}{(1-u)^{2}}\left[\frac{4(2u-1)}{(1+u)^{4}}\,\vert \nabla u \vert^{2}+\frac{2(1-u)}{(1+u)^{3}} \,\frac{g(\na \vert \nabla u \vert, \na u)}{ \vert \nabla u \vert}\right]d\mu\\
&\quad+\!\int\limits_{E_{a}^{b}}\!\chi\!\left(\frac{\mathcal{C}}{2}\,\frac{1+u}{1-u}\right)\eta'_{k}(\vert \nabla u \vert^{2})\, \frac{4\mathcal{C}\vert \nabla u \vert^{2}}{(1-u)(1+u)^{3}}\,\, g(\na \vert \nabla u \vert, \na u)\,d\mu\,,\quad\label{feq52}
\end{align}
where
\begin{equation}
\bigg\vert\int\limits_{E_{a}^{b}}\!\chi\!\left(\frac{\mathcal{C}}{2}\,\frac{1+u}{1-u}\right)\eta'_{k}(\vert \nabla u \vert^{2})\, \frac{4\mathcal{C}\vert \nabla u \vert^{2}}{(1-u)(1+u)^{3}}\,\, g(\na \vert \nabla u \vert, \na u)\,d\mu\,\bigg\vert
\leq \frac{C}{\sqrt{2k}}\longrightarrow0\,\,\,\text{for}\,\,\,k\to+\infty\,,\label{cap4geq20}
\end{equation}
as
\begin{equation}
\Big \vert \, \eta'_{k}(\vert \nabla u \vert^{2})\,\vert \nabla u \vert^{2} \, g(\na \vert \nabla u \vert, \na u)\,\Big\vert\leq\eta'_{k}(\vert \nabla u \vert^{2})\,\vert \nabla u \vert^{3}\,\vert  \nabla du \vert\,\mathbb{I}_{\{\frac{1}{2k}\leq\vert \nabla u \vert^{2}\leq\frac{3}{2k} \}}\leq \frac{3^{3/2}}{\sqrt{2k}}\,\vert  \nabla du \vert\,,
\end{equation}
and
\begin{equation}
\lim\limits_{k\to+\infty}\int\limits_{E_{a}^{b}}\!\chi\!\left(\frac{\mathcal{C}}{2}\,\frac{1+u}{1-u}\right)\eta_{k}(\vert \nabla u \vert^{2})\,\frac{\mathcal{C}\vert \nabla u \vert}{(1-u)^{2}}\,Q\,d\mu=
\int\limits_{M}\!\chi\!\left(\frac{\mathcal{C}}{2}\,\frac{1+u}{1-u}\right)\frac{\mathcal{C}\vert \nabla u \vert}{(1-u)^{2}}\,Q\,d\mu\,,\quad\quad
\label{cap4geq21}
\end{equation}
having set
$$
Q:=\frac{4(2u-1)}{(1+u)^{4}}\,\vert \nabla u \vert^{2}+\frac{2(1-u)}{(1+u)^{3}} \,\frac{g(\na \vert \nabla u \vert, \na u)}{ \vert \nabla u \vert}\,,
$$
by the dominate convergence theorem, indeed, $\eta_{k}(\vert \nabla u \vert^{2})\to \mathbb{I}_{M\setminus\mathrm{Crit}(u)}$ pointwise on $M$ for $k\to +\infty$ and
\begin{equation}
\vert Q\vert\leq \frac{4 \vert 2u-1\vert}{(1+u)^{4}}\,\vert \nabla u \vert^{2}+\frac{2(1-u)}{(1+u)^{3}}\, \vert  \nabla du \vert \,\in L^{1}_{loc}(M)\,,\label{feq53}
\end{equation}
\begin{equation}
\left\vert\chi\!\left(\frac{\mathcal{C}}{2}\,\frac{1+u}{1-u}\right) \eta_{k}(\vert \nabla u \vert^{2})\,\frac{\mathcal{C}\vert \nabla u \vert}{(1-u)^{2}}\,Q\right\vert\leq\frac{\mathcal{C}\|\chi\|_{L^{\infty}(\mathcal{C}/2,+\infty)}}{(1-u)^{2}}\,\vert \nabla u \vert\,\vert Q\vert\,\in L^{1}_{loc}(M)\,.
\end{equation}
Notice that above, the function $Q$ is well defined in $M\setminus\mathrm{Crit}(u)$, hence, these last inequalities hold $\mu$--a.e., as $\mu(\mathrm{Crit}(u))=0$.\\
Then, from formula~\eqref{cap4geq19}, by virtue of equality~\eqref{feq52} together with limits~\eqref{cap4geq20} and~\eqref{cap4geq21}, it follows 
\begin{align}
&\int\limits_{\mathcal{C}/2}^{+\infty}\!\chi'(t)\, G_{1}(t)\,dt\\
&\quad=-\int\limits_{M}\!\chi\!\left(\frac{\mathcal{C}}{2}\,\frac{1+u}{1-u}\right)\frac{\mathcal{C}\vert \nabla u \vert}{(1-u)^{2}}\!\left[\frac{4(2u-1)}{(1+u)^{4}}\,\vert \nabla u \vert^{2}+\frac{2(1-u)}{(1+u)^{3}} \,\frac{g(\na \vert \nabla u \vert, \na u)}{ \vert \nabla u \vert}\right]\,d\mu\\
&\quad=-\!\!\!\int\limits_{\mathcal{C}/2}^{+\infty}\!dt\,\chi(t)\bigg[\frac{1}{4}\!\left(1+\frac{\mathcal{C}}{2t}\right)^{\!3}\!\!\left(1-\frac{3\mathcal{C}}{2t}\right)\!\int\limits_{\Sigma_{t} }\vert \nabla u \vert^{2}\, d\sigma-\frac{\mathcal{C}}{4t}\!\left(1+\frac{\mathcal{C}}{2t}\right)^{\!2}\!\int\limits_{\Sigma_{t} }\vert \nabla u \vert\,\mathrm{H}\, d\sigma\bigg]\,,
\end{align}
where the second equality is obtained by the coarea formula.
In this way, we conclude that $G_{1}$ has a weak derivative in the open interval $(\mathcal{C}/2,+\infty)$, which is in $L^{1}_{loc}(\mathcal{C}/2,+\infty)$, as a consequence of the fact that each summand of $\mathcal{C}\,Q/(1-u)^{2}$ is in $L^{1}_{loc}(M)$ and of the coarea formula.
Then, $G_{1}\in W^{1,1}_{loc}(\mathcal{C}/2,+\infty)$. 
Consequently, $G$ admits a locally absolutely continuous representative in $\left[\mathcal{C}/2,+\infty\right)$, as $G(t)= - \pi\mathcal{C}^{2}/t+G_{1}(t)$, coinciding with $G$ on $\mathcal{T}$.\\ 
Being the function $G$ continuously differentiable on $\mathcal{T}$, with first derivative given by formula~\eqref{G'}, it follows easily the equality
\begin{equation}\label{eqf50}
F(t)=\,\frac{4t^{3}}{\mathcal{C}^{2}}\, G'(t)
\end{equation}
for every $t\in \mathcal{T}$.
We set
\begin{equation}
\qquad\qquad\qquad\mathcal{A}:=2\mathcal{C}\Bigg[\pi-\int\limits_{\partial M}\vert \nabla u \vert^{2}\, d\sigma\Bigg]\qquad\text{and}\qquad\mathcal{B}:=\mathcal{C}\Bigg[2\pi-2\!\int\limits_{\partial M}\!\!\vert \nabla u \vert^{2}\, d\sigma-\!\int\limits_{\partial M}\!\!\vert \nabla u \vert\,\mathrm{H}\,d\sigma\Bigg]\,,
\end{equation}
therefore,
\begin{equation}\label{feq64}
G\left(\mathcal{C}/2\right)=-\mathcal{A}\qquad\text{and}\qquad F\left(\mathcal{C}/2\right)=\mathcal{B}\,,
\end{equation}
by formulas~\eqref{feq38} and~\eqref{feq41}, respectively, and 
$$\mathcal{B}=\mathcal{A}-\mathcal{C}\!\int\limits_{\partial M}\!\!\vert \nabla u \vert\,\mathrm{H}\,d\sigma\,.$$
Then, as a result of this last equality,
\begin{equation}\label{feq63}
\mathcal{B}\geq\big(1-2\mathcal{C}\alpha\big)\mathcal{A}\,,
\end{equation}
since assumption~\eqref{Assonmeancurvatureonboundary} implies 
\begin{equation}
\int\limits_{\partial M}\!\!\vert \nabla u \vert\,\mathrm{H}\,d\sigma\leq 2\alpha \mathcal{A}
\end{equation}
by identity~\eqref{feq44}.
The monotonicity of $F$, proved in Proposition~\ref{propomonGcap} under the assumption that all regular level sets of $u$ are connected, implies 
\begin{equation}
\frac{4t^{3}}{\mathcal{C}^{2}}\,G'(t)-\mathcal{B}=F(t)-F\left(\mathcal{C}/2\right)\geq 0
\end{equation}
for every $t\in\mathcal{T}$, hence, 
\begin{equation}\label{feq65}
G'(t)\geq\, \frac{\mathcal{C}^{2}}{4t^{3}}\,\mathcal{B}
\end{equation}
for all $t\in\mathcal{T}$.
Notice that this inequality is true a.e. in $[\mathcal{C}/2,+\infty)$, as $\mathcal{T}$ differs from $[\mathcal{C}/2,+\infty)$ only for a negligible set, by Sard theorem.
Then, integrating between $\mathcal{C}/2$ and $t\in\mathcal{T}$ and since $G$ admits a locally absolutely continuous representative in $\left[\mathcal{C}/2,+\infty\right)$ coinciding with it on $\mathcal{T}$, it follows
\begin{equation}
G(t)-G\left(\mathcal{C}/2\right)\geq -\frac{\mathcal{C}^{2}\mathcal{B}}{8t^{2}}\,+\,\frac{\mathcal{B}}{2}
\end{equation}
for every $t\in\mathcal{T}$, consequently, 
\begin{equation}
G(t)\geq-\frac{\mathcal{C}^{2}\mathcal{B}}{8t^{2}}\,-\,\frac{(1+2\mathcal{C}\alpha)\mathcal{A}}{2}\label{cap4geq22}
\end{equation}
for all $t\in\mathcal{T}$, by formula~\eqref{feq64} and inequality~\eqref{feq63}.
As explained before Lemma~\ref{Fagliestremi}, there exists $t_{0}\in \left[\mathcal{C}/2,+\infty\right)$ such that $[t_{0},+\infty)\subseteq \mathcal{T}$, therefore, passing in inequality~\eqref{cap4geq22} to the limit for $t\to +\infty$, we get $(1+2\mathcal{C}\alpha)\mathcal{A}\geq0$ from limit~\eqref{feq39}. 
Being $\alpha\in\big(\!-(2\mathcal{C})^{-1},\,(2\mathcal{C})^{-1}\big]$, then $\mathcal{A}\geq0$, from which it follows $\mathcal{B}\geq 0$, by inequality~\eqref{feq63}.
Thus, $G'(t)\geq0$ for every $t\in\mathcal{T}$, by inequality~\eqref{feq65}, and this implies that $G$ is nondecreasing on the set $\mathcal{T}$ (since $\mathcal{T}$ differs from $[\mathcal{C}/2,+\infty)$ for a negligible set and $G$ admits a locally absolutely continuous representative in $\left[\mathcal{C}/2,+\infty\right)$, coinciding with it on $\mathcal{T}$).
\end{proof}

\begin{proposition}[Rigidity -- II]\label{Rigstat2}
Let $(M,g)$ be a $3$--dimensional, complete, one--ended asymptotically flat manifold with compact, connected boundary and with nonnegative scalar curvature.
Let $u\in C^{\infty}(M)$ be the solution of Dirichlet problem~\eqref{f1prelc4} and let $\mathcal{C}>0$ be the boundary capacity of $\partial M$ in $(M,g)$, given by formula~\eqref{boundcapintermofu}. Consider the function $G:[\mathcal{C}/2, + \infty)\to \R$ defined by equality~\eqref{feq37}.
Then, $G$ is constant on the set $\mathcal{T}$, given by equality~\eqref{defTcap}, if and only if $(M,g)$ is isometric to the (exterior spatial) Schwarzschild manifold $(M_{\mathrm{Sch}(m)},\,g_{\mathrm{Sch}(m)})$ of mass $m>0$, see formula~\eqref{feq63bisbis}.
\end{proposition}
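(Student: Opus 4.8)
The plan is to exploit the algebraic bridge between $G$ and the first monotone quantity $F$, namely the identity \eqref{eqf50}, $F(t)=\frac{4t^{3}}{\mathcal{C}^{2}}\,G'(t)$, which holds for every $t\in\mathcal{T}$ and which was recorded in the proof of Proposition~\ref{propG}. Since $t^{3}/\mathcal{C}^{2}$ never vanishes on $[\mathcal{C}/2,+\infty)$, the derivative $G'$ and the function $F$ have exactly the same zero set inside $\mathcal{T}$, so the constancy of $G$ on $\mathcal{T}$ is equivalent to the vanishing of $F$ there. This reduces the present rigidity statement to the one already established in Proposition~\ref{Rigstat1}, and no new geometric analysis is needed.

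For the sufficiency (``if'') direction, I would assume that $(M,g)$ is a Schwarzschild manifold of mass $m>0$. As computed in the proof of Proposition~\ref{Rigstat1}, in this case $u$ has no critical points, $\mathcal{C}=m$ and $F\equiv 0$; substituting into \eqref{eqf50} gives $G'(t)=\frac{\mathcal{C}^{2}}{4t^{3}}F(t)=0$ on all of $[\mathcal{C}/2,+\infty)$, so $G$ is constant. In fact $G\equiv 0$, as one also verifies directly by inserting the explicit expressions \eqref{heq1} for $u$, $\vert\nabla u\vert$ and $\mathrm{H}$ into the definition \eqref{feq37}, which is reassuring and consistent with the limit \eqref{feq39}.

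For the necessity (``only if'') direction, I would start from the assumption that $G$ is constant on $\mathcal{T}$. Since $G$ is continuously differentiable on $\mathcal{T}$ by Proposition~\ref{propG}, and $\mathcal{T}$ is a disjoint countable union of open intervals together with a single interval of type $[\mathcal{C}/2,b)$, constancy on each component forces $G'(t)=0$ for every $t\in\mathcal{T}$. By the identity \eqref{eqf50} this yields $F(t)=0$ for every $t\in\mathcal{T}$, hence $F$ is constant (equal to $0$) on $\mathcal{T}$. Proposition~\ref{Rigstat1} then applies verbatim and produces the desired isometry with a Schwarzschild manifold of some mass $m>0$.

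I do not anticipate a genuine obstacle: the entire difficulty of the rigidity analysis, namely the control of the level-set geometry across the critical values and the reconstruction of the warped-product Schwarzschild form, has already been absorbed into Proposition~\ref{Rigstat1}, and the only new ingredient is the pointwise proportionality \eqref{eqf50}. The single point deserving a line of care is the passage from ``$G$ constant on $\mathcal{T}$'' to ``$G'\equiv 0$ on $\mathcal{T}$'': one should observe that constancy on each connected component of $\mathcal{T}$ already suffices, because \eqref{eqf50} is a pointwise identity and Proposition~\ref{Rigstat1} requires only that $F$ vanish at every point of $\mathcal{T}$.
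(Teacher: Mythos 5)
Your proposal is correct and follows essentially the same route as the paper: both directions rest on the pointwise identity \eqref{eqf50} relating $F$ and $G'$ on $\mathcal{T}$, which reduces the rigidity statement for $G$ to Proposition~\ref{Rigstat1}. The only (harmless) difference is that you invoke Proposition~\ref{Rigstat1} as a black box once $F\equiv 0$ on $\mathcal{T}$ is established, whereas the paper partially re-runs its argument (the maximal time $T$ with $\nabla u\neq 0$ in $M_T$, the vanishing of $F'$, the conclusion $T=+\infty$) before citing it.
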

\begin{proof}
If $(M,g)$ is the (exterior spatial) Schwarzschild manifold with mass $m>0$, then, by the equalities~\eqref{heq1} along with the observation $m=\mathcal{C}$, one obtains directly that $G\equiv 0$ in $[m/2, + \infty)$.
Now, we assume that $G$ is constant on the set $\mathcal{T}$, then, as in the proof of Proposition~\ref{Rigstat1}, there exists a maximal time $T\in(\mathcal{C}/2, + \infty]$ such that $\na u \neq 0$ in $M_{T}:=\big\{0\leq u<(1-\frac{\mathcal{C}}{2T})/(1+\frac{\mathcal{C}}{2T})\big\}$, since $\mathcal{T}\supseteq[a, b)$ with $a=\mathcal{C}/2$. 
Hence, in $\left[\frac{\mathcal{C}}{2},T\right)$ the function $G$ is of class $C^{2}$, with $G'(t)$ given by formula~\eqref{G'} and at same time with $G'(t)=0$, while $F$ is of class $C^{1}$, with $F'$ given by formula~\eqref{feq54} and at the same time $F'(t)=0$, as $F\equiv0$ due to equality~\eqref{eqf50}.
Consequently, arguing as in the proof of Proposition~\ref{Rigstat1}, one obtains that $T= +\infty$ and $|\na u| \neq 0$ everywhere.
Then, the conclusion follows from Proposition~\ref{Rigstat1}.
\end{proof}

We are now ready to state and to prove the two main results of this section.

\begin{theorem}\label{teocapacityandareaboundarybis}
Let $(M,g)$ be a $3$--dimensional, complete, one--ended asymptotically flat manifold with compact, connected boundary and with nonnegative scalar curvature.
Let $u\in C^{\infty}(M)$ be the solution of Dirichlet problem~\eqref{f1prelc4} and let $\mathcal{C}>0$ be the boundary capacity of $\partial M$ in $(M,g)$, given by formula~\eqref{boundcapintermofu}.
Assume that $H_2(M,\partial M;\Z) = 0$ and there exists $\alpha\in\big(\!-(2\mathcal{C})^{-1},\,(2\mathcal{C})^{-1}\big]$ such that 
$ \HHH\leq \alpha\big(1-4\mathcal{C}\vert  \nabla u \vert\,\big)$ on $\partial M$.
Then,
\begin{align}\label{feq62bis}
4\pi\,t^{2}\,\bigg(\!1+\frac{\mathcal{C}}{2t}\bigg)^{\!4}\leq\mathrm{Area}\left(\Biggl\{u=\frac{1-\frac{\mathcal{C}}{2t}}{1+\frac{\mathcal{C}}{2t}}\Biggr\}\right)\,\,
\end{align}
for every $t\in \mathcal{T}$, where the set $\mathcal{T}$ is given by equality~\eqref{defTcap}. Thus,
\begin{equation}\label{capacityandareaboundarybis}
\mathcal{C}\leq\sqrt{\frac{\mathrm{Area}(\partial M)}{16\pi}} \,,
\end{equation}
with equality if and only if $(M,g)$ is isometric to a (exterior spatial) Schwarzschild manifold of mass $m>0$, see formula~\eqref{feq63bisbis}.
\end{theorem}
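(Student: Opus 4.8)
The plan is to read everything off the second monotonicity formula. Under the two standing hypotheses here --- $H_2(M,\partial M;\Z)=0$, which forces the regular level sets of $u$ to be connected, and the mean curvature bound~\eqref{Assonmeancurvatureonboundary} --- Proposition~\ref{propG} guarantees that (the locally absolutely continuous representative of) $G$ is nondecreasing on $\mathcal{T}$, hence nondecreasing on all of $[\mathcal{C}/2,+\infty)$ since $\mathcal{T}$ differs from it only by a negligible set. Coupling this monotonicity with the vanishing of the limit at infinity $\lim_{t\to+\infty}G(t)=0$ from~\eqref{feq39}, I would immediately conclude that $G(t)\le 0$ for every $t\in\mathcal{T}$.

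Unwinding $G(t)\le 0$ through the definition~\eqref{feq37} of $G$ yields, on each regular level set, the bound
\begin{equation*}
\int\limits_{\Sigma_{t}}\vert\na u\vert^{2}\,d\sigma\,\le\,\frac{4\pi\mathcal{C}^{2}}{t^{2}}\Big(1+\frac{\mathcal{C}}{2t}\Big)^{\!-4}\,.
\end{equation*}
The second ingredient is the Cauchy--Schwarz inequality applied to the constant flux identity~\eqref{feq44}, namely
\begin{equation*}
(4\pi\mathcal{C})^{2}=\Big(\int\limits_{\Sigma_{t}}\vert\na u\vert\,d\sigma\Big)^{\!2}\le\mathrm{Area}(\Sigma_{t})\int\limits_{\Sigma_{t}}\vert\na u\vert^{2}\,d\sigma\,.
\end{equation*}
Dividing and inserting the previous upper bound on $\int_{\Sigma_{t}}\vert\na u\vert^{2}\,d\sigma$ produces exactly the area estimate~\eqref{feq62bis}, for every $t\in\mathcal{T}$. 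Specializing to the left endpoint $t=\mathcal{C}/2$, where $\Sigma_{\mathcal{C}/2}=\{u=0\}=\partial M$ and $(1+\mathcal{C}/2t)=2$, reduces~\eqref{feq62bis} to $\mathrm{Area}(\partial M)\ge 16\pi\mathcal{C}^{2}$, which is precisely inequality~\eqref{capacityandareaboundarybis}.

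For the rigidity I would trace the two inequalities backwards. Equality in~\eqref{capacityandareaboundarybis} forces equality at $t=\mathcal{C}/2$ in both steps: equality in Cauchy--Schwarz means $\vert\na u\vert$ is constant on $\partial M$, while saturating $G(\mathcal{C}/2)\le 0$ gives $G(\mathcal{C}/2)=0$. Since $G$ is nondecreasing and tends to $0$ at infinity, $G(\mathcal{C}/2)=0$ forces $G\equiv 0$ on $\mathcal{T}$, and Proposition~\ref{Rigstat2} then identifies $(M,g)$ with an exterior spatial Schwarzschild manifold. The converse is a direct check from~\eqref{heq1}: on Schwarzschild of mass $m=\mathcal{C}$ the boundary $\{\vert x\vert=m/2\}$ carries conformal factor $16$, hence has area $16\pi\mathcal{C}^{2}$, realizing equality. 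The one genuinely delicate point is this equality analysis; the monotonicity of $G$ itself, together with the care required to survive the critical values of $u$, has already been discharged in Proposition~\ref{propG}, so the remaining work is essentially bookkeeping at the two endpoints.
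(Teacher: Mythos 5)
Your proposal is correct and follows essentially the same route as the paper's own proof: monotonicity of $G$ from Proposition~\ref{propG} combined with the limit~\eqref{feq39} gives $G(t)\le 0$ on $\mathcal{T}$, Cauchy--Schwarz against the flux identity~\eqref{feq44} yields~\eqref{feq62bis}, specialization to $t=\mathcal{C}/2$ gives~\eqref{capacityandareaboundarybis}, and tracing equality back to $G(\mathcal{C}/2)=0$ invokes Proposition~\ref{Rigstat2} exactly as in the text. The only cosmetic differences (passing to the absolutely continuous representative to speak of monotonicity on all of $[\mathcal{C}/2,+\infty)$, and noting that Cauchy--Schwarz equality forces $\vert\na u\vert$ constant on $\partial M$) are harmless and already implicit in the paper's argument.
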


This theorem provides a sharp comparison between the area of the level sets of the function $u$ and of the analogue of $u$ in the Schwarzschild manifold of mass $\mathcal{C}$. 

\begin{proof}
The assumption $H_{2}(M,\partial M;\Z)=0$ implies that all regular level sets of $u$ are connected, as already explained at the end of Section~\ref{settmasscapacity}. Then, the function $G$, defined by formula~\eqref{feq37}, is nondecreasing on the set $\mathcal{T}$, by Propositions~\ref{propG}. Thus, for every $t\in \mathcal{T}$
\begin{equation}
G(\mathcal{C}/2) \leq \,G(t)\leq \lim_{t\to+\infty}G(t)=0\,,\label{feq60bis}
\end{equation}
by limit~\eqref{feq39}.
The last inequality and definition~\eqref{feq37} of $G$ imply 
$$
\int\limits_{\Sigma_{t}} \vert \nabla u \vert^{2}\, d\sigma\leq \frac{4\pi\mathcal{C}^{2}}{t^{2}}\,\bigg(\!1+\frac{\mathcal{C}}{2t}\bigg)^{\!-4}\,,
$$
where $\Sigma_{t}$ is the level set of $u$ given by formula~\eqref{Sigmat}.
As a consequence,
\begin{equation}\label{feq68}
4\pi\mathcal{C}=\int\limits_{\Sigma_{t}} \vert \nabla u \vert\, d\sigma\leq\Bigg[\,\int\limits_{\Sigma_{t}} \vert \nabla u \vert^{2}\, d\sigma\Bigg]^{1/2}\!\!\left[\mathrm{Area}(\Sigma_{t})\right]^{1/2}
\leq (4\pi)^{1/2}\,\,\frac{\mathcal{C}}{t}\,\left(1+\frac{\mathcal{C}}{2t}\right)^{\!-2}\!\!\left[\mathrm{Area}(\Sigma_{t})\right]^{1/2}\,,
\end{equation}
where the equality is known from formula~\eqref{feq44} and the first inequality follows from H\"{o}lder inequality.
Then,
\begin{align*}
\mathrm{Area}(\Sigma_{t})\geq 4\pi\,t^{2}\,\bigg(\!1+\frac{\mathcal{C}}{2t}\bigg)^{\!4}\,
\end{align*}
for every $t\in \mathcal{T}$. In particular, for $t=\mathcal{C}/2\in \mathcal{T}$, one has
\begin{align*}
\mathrm{Area}(\partial M)&\geq 16\,\pi\,\mathcal{C}^{2}\,,
\end{align*}
from which it follows
$$
\mathcal{C}\leq\sqrt{\frac{\mathrm{Area}(\partial M)}{16\pi}}\,.
$$
Finally, if we assume that the equality holds, then, for $t=\mathcal{C}/2$, the chain of inequalities~\eqref{feq68} is a chain of equalities, therefore, $G(\mathcal{C}/2)=0$. Thus, $G$ is constant on $\mathcal{T}$, by formula~\eqref{feq60bis}, as a consequence, $(M,g)$ is isometric to $(M_{\mathrm{Sch}(\mathcal{C})},\,g_{\mathrm{Sch}(\mathcal{C})})$, by Proposition~\ref{Rigstat2}. On other side, in a (exterior spatial) Schwarzschild manifold with mass $m>0$, the equality in formula~\eqref{capacityandareaboundarybis} can be checked directly.
\end{proof}

\begin{theorem}\label{teo2 lower bound differenza tra il rapporto massa cap e una normalizzazione di Willmore energybis}
Let $(M,g)$ be a $3$--dimensional, complete, one--ended asymptotically flat manifold with compact, connected boundary and with nonnegative scalar curvature.
Let $u\in C^{\infty}(M)$ be the solution of Dirichlet problem~\eqref{f1prelc4} and let $\mathcal{C}>0$ be the boundary capacity of $\partial M$ in $(M,g)$, given by formula~\eqref{boundcapintermofu}.
Assume that $H_2(M,\partial M;\Z) = 0$ and there exists $\alpha\in\big(\!-(2\mathcal{C})^{-1},\,(2\mathcal{C})^{-1}\big]$ such that 
$ \HHH\leq \alpha\big(1-4\mathcal{C}\vert  \nabla u \vert\,\big)$ on $\partial M$.
Then,
\begin{equation}\label{lower bound differenza tra il rapporto massa cap e una normalizzazione di Willmore energybis}
\frac{m_{\mathrm{ADM}}}{\mathcal{C}}\,\geq\,\frac{5}{4}\,+\,\frac{1}{64\pi}\!\int\limits_{\partial M}\!\mathrm{H}^{2}\,d\sigma\,-\,\frac{1}{4\pi}\!\int\limits_{\partial M}\!\bigg(\vert \nabla u \vert\,+\,\frac{\mathrm{H}}{4}\bigg)^{2}\, d\sigma\,\geq 1,
\end{equation}
with the equality in the first inequality if and only if $(M,g)$ is isometric to a (exterior spatial) Schwarzschild manifold of mass $m>0$, see formula~\eqref{feq63bisbis}.
\end{theorem}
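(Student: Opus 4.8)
The plan is to split the claim into its two inequalities and to observe that each is already delivered by the preceding results. First, I note that the first inequality in~\eqref{lower bound differenza tra il rapporto massa cap e una normalizzazione di Willmore energybis}, together with its rigidity characterization, is literally the content of Theorem~\ref{teo1 lower bound differenza tra il rapporto massa cap e una normalizzazione di Willmore energy}, whose sole hypothesis $H_{2}(M,\partial M;\Z)=0$ is among those assumed here. Hence this part, and the assertion that equality forces isometry to a (exterior spatial) Schwarzschild manifold, requires no further argument. The entire remaining task is the second, purely boundary, inequality.

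To isolate it, I would first expand the square on the right hand side of~\eqref{lower bound differenza tra il rapporto massa cap e una normalizzazione di Willmore energybis}: writing $(|\na u|+\mathrm{H}/4)^{2}=|\na u|^{2}+\tfrac{1}{2}|\na u|\,\mathrm{H}+\mathrm{H}^{2}/16$, the two occurrences of $\tfrac{1}{64\pi}\int_{\partial M}\mathrm{H}^{2}\,d\sigma$ cancel and the right hand side collapses to
\begin{equation*}
\frac{5}{4}-\frac{1}{4\pi}\Bigg[\,\int\limits_{\partial M}|\na u|^{2}\,d\sigma+\frac{1}{2}\int\limits_{\partial M}|\na u|\,\mathrm{H}\,d\sigma\Bigg]\,.
\end{equation*}
Recalling the boundary quantity $\mathcal{B}=\mathcal{C}\big[2\pi-2\int_{\partial M}|\na u|^{2}\,d\sigma-\int_{\partial M}|\na u|\,\mathrm{H}\,d\sigma\big]$ introduced in the proof of Proposition~\ref{propG}, a one--line computation shows that this expression equals exactly $1+\mathcal{B}/(8\pi\mathcal{C})$. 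In this way the second inequality becomes \emph{equivalent} to the single scalar statement $\mathcal{B}\geq 0$.

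The key step is then to invoke $\mathcal{B}\geq 0$, which is precisely what Proposition~\ref{propG} provides under the mean curvature assumption~\eqref{Assonmeancurvatureonboundary}. There, the monotonicity of $G$ on $\mathcal{T}$ combined with the limit $\lim_{t\to+\infty}G(t)=0$ from~\eqref{feq39} forces $(1+2\mathcal{C}\alpha)\mathcal{A}\geq 0$, whence $\mathcal{A}\geq 0$ since $\alpha>-(2\mathcal{C})^{-1}$, and finally $\mathcal{B}\geq(1-2\mathcal{C}\alpha)\mathcal{A}\geq 0$ through inequality~\eqref{feq63}, using $\alpha\leq(2\mathcal{C})^{-1}$. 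Since $\mathcal{C}>0$, this yields $1+\mathcal{B}/(8\pi\mathcal{C})\geq 1$, which is the desired second inequality.

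I do not expect a genuine obstacle at this stage: the substance of the argument has already been absorbed into the two monotonicity formulas of Propositions~\ref{propomonGcap} and~\ref{propG} and their rigidity analyses. The only delicate points will be the bookkeeping of the algebraic identity linking the right hand side of~\eqref{lower bound differenza tra il rapporto massa cap e una normalizzazione di Willmore energybis} to $\mathcal{B}$, and keeping track of the fact that the two--sided condition on $\alpha$ is used twice inside Proposition~\ref{propG}: once to pass from $(1+2\mathcal{C}\alpha)\mathcal{A}\geq 0$ to $\mathcal{A}\geq 0$, and once more to pass from $\mathcal{A}\geq 0$ to $\mathcal{B}\geq 0$. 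The rigidity in the first inequality is, as noted, simply that of Theorem~\ref{teo1 lower bound differenza tra il rapporto massa cap e una normalizzazione di Willmore energy}.
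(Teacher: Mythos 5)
Your proposal is correct and takes essentially the same approach as the paper: the first inequality and its rigidity are imported from Theorem~\ref{teo1 lower bound differenza tra il rapporto massa cap e una normalizzazione di Willmore energy}, and the lower bound by $1$ is obtained by identifying the central term with $1+\mathcal{B}/(8\pi\mathcal{C})$ and invoking $\mathcal{B}\geq 0$ from the proof of Proposition~\ref{propG}. Your expanded algebra (the cancellation of the $\mathrm{H}^{2}$ terms) and your tracking of the two uses of the two--sided condition on $\alpha$ simply make explicit what the paper's one--line proof leaves implicit.
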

\begin{proof}
Notice that everything is already known by Theorem~\ref{teo1 lower bound differenza tra il rapporto massa cap e una normalizzazione di Willmore energy}, except the fact that the central term of inequality~\eqref{lower bound differenza tra il rapporto massa cap e una normalizzazione di Willmore energybis} is greater or equal to $1$, but it follows from the proof of Proposition~\ref{propG}, since this term is equal to $1+\mathcal{B}/(8\pi\mathcal{C})$.
\end{proof}

Under the hypothesis of the previous theorem with the difference of assuming the existence of $\alpha\in\big(\!-(2\mathcal{C})^{-1},\,(2\mathcal{C})^{-1}\big)$ such that $\HHH\leq \alpha\big(1-4\mathcal{C}\vert  \nabla u \vert\,\big)$ on $\partial M$, the term to the right hand side of inequality~\eqref{feq72} is nonnegative, as it is equal to $\mathcal{B}/(8\pi\mathcal{C})$, and it is zero (or, equivalently, the central term of inequality~\eqref{lower bound differenza tra il rapporto massa cap e una normalizzazione di Willmore energybis} is equal to $1$)  if and only if $(M,g)$ is isometric to a (exterior spatial) Schwarzschild manifold of mass $m>0$, indeed, the equality holds if and only if $\mathcal{B}=0$, which is equivalent to have $\mathcal{A}=0$, as $\mathcal{B}\geq\big(1-2\mathcal{C}\alpha\big)\mathcal{A}\geq 0$ (see the proof of Proposition~\ref{propG}).
This in turn is equivalent to say that $G$ is constant on the set $\mathcal{T}$, thus, the conclusion follows by Proposition~\ref{Rigstat2}.

\begin{remark}
If $\mathrm{H}\leq 0$, the condition of the existence of a real number $\alpha\in\big(\!-(2\mathcal{C})^{-1},\,(2\mathcal{C})^{-1}\big)$, such that 
$ \HHH\leq \alpha\big(1-4\mathcal{C}\vert  \nabla u \vert\,\big)$ on $\partial M$, is naturally satisfied, indeed, one can take $\alpha = 0$.
\end{remark}

As an immediate corollary of the above theorem, we have the following extension of the cases of validity of the mass--capacity inequality obtained by Bray in~\cite{bray1}.

\begin{cor}\label{genBray}
Let $(M,g)$ be a $3$--dimensional, complete, one--ended asymptotically flat manifold with compact, connected boundary and with nonnegative scalar curvature.
Let $u\in C^{\infty}(M)$ be the solution of Dirichlet problem~\eqref{f1prelc4} and let $\mathcal{C}>0$ be the boundary capacity of $\partial M$ in $(M,g)$, given by formula~\eqref{boundcapintermofu}.
Assume that $H_2(M,\partial M;\Z) = 0$ and there exists $\alpha\in\big(\!-(2\mathcal{C})^{-1},\,(2\mathcal{C})^{-1}\big]$ such that 
$ \HHH\leq \alpha\big(1-4\mathcal{C}\vert  \nabla u \vert\,\big)$ on $\partial M$.
Then,
\begin{equation}\label{capacityandmass}
m_{\mathrm{ADM}}\,\geq\,\mathcal{C}\,,
\end{equation}
with the equality if and only if $(M,g)$ is isometric to a (exterior spatial) Schwarzschild manifold of mass $m>0$, see formula~\eqref{feq63bisbis}.
\end{cor}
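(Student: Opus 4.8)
The plan is to obtain this as an immediate consequence of Theorem~\ref{teo2 lower bound differenza tra il rapporto massa cap e una normalizzazione di Willmore energybis}. Observe that the hypotheses of the corollary coincide verbatim with those of that theorem: one-ended asymptotic flatness with compact connected boundary and nonnegative scalar curvature, the topological condition $H_2(M,\partial M;\Z)=0$, and the existence of $\alpha\in\big(\!-(2\mathcal{C})^{-1},\,(2\mathcal{C})^{-1}\big]$ with $\HHH\leq\alpha\big(1-4\mathcal{C}\vert\nabla u\vert\big)$ on $\partial M$. Hence inequality~\eqref{lower bound differenza tra il rapporto massa cap e una normalizzazione di Willmore energybis} holds, and in particular its outer terms already give $m_{\mathrm{ADM}}/\mathcal{C}\geq 1$. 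Since $\mathcal{C}>0$, this is precisely the desired bound~\eqref{capacityandmass}, so no new analytic work is needed for the inequality itself.

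For the rigidity statement I would argue by squeezing the chain in~\eqref{lower bound differenza tra il rapporto massa cap e una normalizzazione di Willmore energybis}. Assume equality in~\eqref{capacityandmass}, that is $m_{\mathrm{ADM}}=\mathcal{C}$, so that $m_{\mathrm{ADM}}/\mathcal{C}=1$. Writing $W$ for the central Willmore-type term, the chain reads $1=m_{\mathrm{ADM}}/\mathcal{C}\geq W\geq 1$, which forces $W=1$ and therefore equality in the \emph{first} inequality of~\eqref{lower bound differenza tra il rapporto massa cap e una normalizzazione di Willmore energybis}. By the equality characterization supplied by Theorem~\ref{teo2 lower bound differenza tra il rapporto massa cap e una normalizzazione di Willmore energybis}, this occurs if and only if $(M,g)$ is isometric to an (exterior spatial) Schwarzschild manifold of mass $m>0$. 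Conversely, on such a manifold the explicit formulas~\eqref{heq1} give $\mathcal{C}=m$, while the ADM mass equals $m$ as well, so equality in~\eqref{capacityandmass} can be checked directly.

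There is no substantive obstacle here, since all the difficult analysis---the two monotonicity formulas, their rigidity analysis, and the boundary-term computations---has already been carried out in the preceding propositions, lemma and theorems. The only point requiring care is the equality case: one must ensure that equality in~\eqref{capacityandmass} propagates, via the squeeze, specifically to equality in the first (sharp) inequality of the chain, which is exactly the one whose rigidity is characterized by the theorem, rather than merely to the lower bound $W\geq 1$. This is automatic once $m_{\mathrm{ADM}}/\mathcal{C}$ is pinned to the value $1$, and it is what closes the argument.
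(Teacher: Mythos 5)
Your proposal is correct and is exactly the derivation the paper intends: the corollary is stated as an immediate consequence of Theorem~\ref{teo2 lower bound differenza tra il rapporto massa cap e una normalizzazione di Willmore energybis}, with the inequality read off from the outer terms of the chain~\eqref{lower bound differenza tra il rapporto massa cap e una normalizzazione di Willmore energybis} and the rigidity obtained by squeezing $1=m_{\mathrm{ADM}}/\mathcal{C}\geq W\geq 1$ to force equality in the first (sharp) inequality, whose equality case the theorem characterizes. Your care in routing the equality to the first inequality rather than to $W\geq 1$ is well placed, since under the closed-interval hypothesis $\alpha\in\big(\!-(2\mathcal{C})^{-1},(2\mathcal{C})^{-1}\big]$ only the former is characterized by the theorem.
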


Notice that all results contained in this section continue to be true if we replace the assumption of existence of $\alpha\in\big(\!-(2\mathcal{C})^{-1},\,(2\mathcal{C})^{-1}\big]$ (resp. $\alpha\in (-(2\mathcal{C})^{-1},\,(2\mathcal{C})^{-1})$) such that $ \HHH\leq \alpha\big(1-4\mathcal{C}\vert  \nabla u \vert\,\big)$ on $\partial M$, with the assumption of existence of $\alpha\in\big(\!-(2\mathcal{C})^{-1},\,(2\mathcal{C})^{-1}\big]$ (resp. $\alpha\in (-(2\mathcal{C})^{-1},\,(2\mathcal{C})^{-1})$) such that inequality~\eqref{feq63} holds.

\appendix

\section{Some topological remarks}\label{AppA}

In this appendix, we provides an alternative approach to prove that the regular level sets of the solution $u$ of Dirichlet problem~\eqref{f1prelc4}, in a $3$--dimensional, complete, one--ended asymptotically flat manifold $(M,g)$ with compact, connected boundary, are connected. 
With this aim, we show some topological results involving smooth manifolds with boundary and refer the reader to~\cite{Hatcher, bredon, massey2, lee3} for the basic ones.
For completeness, we also treat the case of smooth manifolds without boundary.
First let us start with the following lemma.

\begin{lemma}\label{compcase}
Let $N$ be a $3$--dimensional, compact, smooth manifold with or without connected boundary $\partial N$. Then the first Betti number of $N$, which is the rank of $H_{1}(N;\Z)$, is zero if and only of $H_{2}(N,\partial N;\Z)=0$.
\end{lemma}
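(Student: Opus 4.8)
The plan is to read off the equivalence from Poincar\'e--Lefschetz duality combined with the universal coefficient theorem, distinguishing the orientable and non-orientable cases; throughout I regard $N$ as connected and use the convention that $H_2(N,\partial N;\Z)$ means $H_2(N;\Z)$ when $\partial N=\varnothing$. When $N$ is orientable the argument is immediate: Lefschetz duality gives $H_2(N,\partial N;\Z)\cong H^1(N;\Z)$, and since $H_0(N;\Z)\cong\Z$ is free the $\mathrm{Ext}$ term in the universal coefficient theorem vanishes, so that $H^1(N;\Z)\cong\operatorname{Hom}(H_1(N;\Z),\Z)\cong\Z^{b_1(N)}$. Hence $H_2(N,\partial N;\Z)$ is free of rank $b_1(N)$, and $b_1(N)=0\iff H_2(N,\partial N;\Z)=0$ follows at once.

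The real content is the non-orientable case, where I would instead show that \emph{both} conditions fail, so that the equivalence holds because each side is false. First, non-orientability forces $b_1(N)\geq 1$. Using $\chi(N)=\tfrac12\chi(\partial N)$ (valid for any compact odd-dimensional manifold, with $\chi(\varnothing)=0$) together with $b_3(N)=0$ (which holds because $N$ is non-orientable if closed, and has nonempty boundary otherwise), one gets $b_2(N)=b_1(N)-1+\tfrac12\chi(\partial N)$. If $\partial N=\varnothing$ or $\partial N$ is not a sphere, then $\chi(\partial N)\leq 0$ and nonnegativity of $b_2(N)$ forces $b_1(N)\geq 1$; if $\partial N\cong\SSS^2$, I would cap $N$ off with a $3$--ball to produce a closed non-orientable manifold $\widehat N$ with $b_1(\widehat N)=b_1(N)$ by Mayer--Vietoris, and conclude $b_1(N)\geq 1$ from the already-settled closed case.

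It remains to show that $H_2(N,\partial N;\Z)\neq 0$ for non-orientable $N$, and this is the step I expect to be the main obstacle, since untwisted rational duality is unavailable and the group can be pure $2$--torsion (for $N=(\text{M\"obius band})\times\SSS^1$ it is exactly $\Z/2$), so the nonzero class has to be produced by a torsion argument rather than by a Betti-number count. The plan is to invoke twisted Lefschetz duality $H_2(N,\partial N;\Z)\cong H^1(N;\Z_w)$, where $\Z_w$ denotes the orientation local system, and to exploit the short exact sequence of local systems $0\to\Z_w\xrightarrow{\times 2}\Z_w\to\Z/2\to 0$ (reduction mod $2$ untwists the coefficients). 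Since $N$ is non-orientable there is a loop along which $\Z_w$ has monodromy $-1$, so $H^0(N;\Z_w)=0$; exactness of the associated long exact sequence then makes the Bockstein $H^0(N;\Z/2)\to H^1(N;\Z_w)$ injective, whence $H^1(N;\Z_w)\neq 0$ and therefore $H_2(N,\partial N;\Z)\neq 0$. Assembling the three steps yields the equivalence in all cases; the remaining verifications (the two duality isomorphisms, the Mayer--Vietoris computation, and the long exact sequence bookkeeping) are routine.
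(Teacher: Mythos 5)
Your argument is correct, and in the orientable case it essentially coincides with the paper's cases $\mathrm{(1)}$ and $\mathrm{(3)}$ (Poincar\'e--Lefschetz duality plus freeness of $H^1(N;\Z)\cong\operatorname{Hom}(H_1(N;\Z),\Z)$). In the nonorientable case the two proofs genuinely diverge: the paper splits into cases according to the topology of the connected boundary (empty, sphere, connected sum of tori, connected sum of an even number of projective planes), proves $b_1(N)>0$ by Euler characteristic counts together with a doubling/Mayer--Vietoris argument when $\partial N\cong\SSS^2$, and then derives $H_2(N,\partial N;\Z)\neq 0$ by contradiction from exact sequences of the pair, the universal coefficient theorem and the fact that $H_3(N,\partial N;\Z_{2})=\Z_{2}$; you instead obtain $H_2(N,\partial N;\Z)\neq 0$ in all nonorientable cases at once via twisted Lefschetz duality $H_2(N,\partial N;\Z)\cong H^1(N;\Z_w)$ and the Bockstein of $0\to\Z_w\to\Z_w\to\Z_{2}\to 0$, using $H^0(N;\Z_w)=0$. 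Your route buys uniformity (no case division on $\partial N$) and the strictly stronger conclusion that $H_2(N,\partial N;\Z)$ always contains a $\Z_{2}$ --- and, as your M\"obius--band$\,\times\,\SSS^1$ example shows, the group can be pure $2$--torsion, which is exactly why Betti--number counts alone cannot detect it and why the paper has to fall back on the $H_3(N,\partial N;\Z_{2})$ contradiction; the paper's route stays entirely within untwisted singular (co)homology, a more elementary toolkit, at the price of the six--case analysis. Your capping--off of a spherical boundary by a $3$--ball is a slightly lighter substitute for the paper's double $\mathrm{D}(N)$, with the same Mayer--Vietoris conclusion $b_1(N)=b_1(\widehat N)$ (versus $b_1(\mathrm{D}(N))=2b_1(N)$ in the paper). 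One small point to patch: the assertion that $\partial N$ not a sphere forces $\chi(\partial N)\leq 0$ is false for surfaces of odd Euler characteristic ($\mathbb{RP}^2$ has $\chi=1$); you should note, as the paper does, that $\chi(\partial N)=2\chi(N)$ is even, which rules out such boundaries --- this is already implicit in the integrality of your own identity $b_2(N)=b_1(N)-1+\tfrac12\chi(\partial N)$, so the repair is one line.
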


\begin{proof}
We divide the discussion into six cases.
\begin{enumerate}[label=$\mathrm{(\arabic*)}$]
\item {\em $N$ is orientable and $\partial N=\emptyset$.}
\item {\em $N$ is nonorientable and $\partial N=\emptyset$.}
\item {\em $N$ is orientable and $\partial N\neq \emptyset$.}
\item {\em $N$ is nonorientable and $\partial N$ is a $2$--sphere.}
\item {\em $N$ is nonorientable and $\partial N$ is a connected sum of $k$ tori, $k\in \N^{+}$.}
\item {\em $N$ is nonorientable and $\partial N$ is a connected sum of $2k$ projective planes, $k\in \N^{+}$.}
\end{enumerate}
Note that the case where $N$ is nonorientable and $\partial N$ is a connected sum of $2k+1$ projective planes, with $k\in \N$, is not possible because $\chi (\partial N)=2\chi (N)$ is even, by using the odd dimension of $N$.
Here, $\chi (\partial N)$ and $\chi (N)$ are the Euler--Poincar\`e characteristic of $\partial N$ and $N$, respectively.\\
In case~$\mathrm{(1)}$, the equivalence is a consequence of the Poincar\`e duality theorem and the fact that $H_{2}(N;\Z)$ is a torsion--free group.
In case~$\mathrm{(2)}$, the torsion subgroup of $H_{2}(N;\Z)$ is $\Z_{2}$ and the first Betti number of $N$ is nonzero, indeed, $b_{1}(N)=1+b_{2}(N)$. This last equality is obtained by combining
$\chi(N)=0$ and $H_{3}(N;\Z)=0$, which hold true due to the odd dimension and the nonorientability of $N$, respectively. In case~$\mathrm{(3)}$, similarly to case~$\mathrm{(1)}$, the equivalence is a consequence of the Poincar\`e duality theorem and the fact that $H_{2}(N,\partial N;\Z)$ is a torsion--free group. Concerning case~$\mathrm{(4)}$, using the Mayer--Vietoris sequence for reduced homology and the assumption that $\partial N$ is a $2$-sphere, one obtains $2b_ 1 (N) =b_ 1 (\mathrm {D} (N)) $, where $\mathrm {D} (N)$ is the double of $N$. As a result, $b_{1}(N)>0$, since the manifold $\mathrm{D}(N)$ lays in case~$\mathrm{(2)}$. This implies $b_{2}(N)>0$ due to the equality $b_{2}(N)=b_{1}(N)$, which is gotten by exploiting in the identity $\chi (\partial N)=2\chi (N)$ the assumption that $\partial N$ is a $2$-sphere together with the equalities $b_{0}(N)=1$, $b_{3}(N)=0$. More precisely, $H_{0}(N;\Z)$ is $\Z$ and $H_{3}(N;\Z)$ is zero. Notice that these equalities hold in general when $N$ is connected and has boundary, respectively.
Suppose, by contradiction, that $H_{2}(N,\partial N;\Z)$ is zero. Then, one has the following exact short sequence
\begin{equation}
\big(H_{3}(N;\Z)=\!\big) 0\to H_3(N,\partial N;\Z)\to H_2(\partial N;\Z)\to H_2(N;\Z)\to 0 \big(\!=H_2(N,\partial N;\Z)\big)\,,
\end{equation}
hence $H_3(N,\partial N;\Z)$ is torsion--free and 
\begin{equation}\label{apAeq1}
1=b_{2}(\partial N)=b_{3}(N,\partial N)+b_{2}(N)\,,
\end{equation}
since $H_2(\partial N;\Z)$ is torsion--free and equal to $\Z$, respectively.
As a consequence of formula~\eqref{apAeq1} with the result $b_{2}(N)>0$, one has $b_{3}(N,\partial N)=0$, but then $H_3(N,\partial N;\Z)$ is zero because it is also torsion--free.
Thus, by the universal coefficient theorem for homology, $H_3(N,\partial N;\Z_{2})$ is zero, but this is no possible because $H_3(N,\partial N;\Z_{2})=\Z_{2}$.
In the remaining cases, one has immediately $b_{1}(N)>0$, by using in the identity $\chi (\partial N)=2\chi (N)$ the assumption that $\partial N$ is the connected sum of $k$ tori or $2k$ projective planes.
Concerning case~$\mathrm{(5)}$, by Mayer--Vietoris, one gets the following long exact sequence
$$\dots\to\big( H_{3}(\mathrm{D}(N);\Z)=\!\big)0\to H_{2}(\partial N;\Z) \to H_{2}(N;\Z)\oplus H_{2}(N;\Z)\to \dots\,,$$
hence $b_{2}(N)>0$, being $H_{2}(\partial N;\Z) =\Z$ in this case. Then, the conclusion that $H_2(N,\partial N;\Z)$ is not zero follows in the same way of case~$\mathrm{(4)}$.
Case~$\mathrm{(6)}$ is simpler than cases~$\mathrm{(4)}$ and~$\mathrm{(5)}$. Indeed, if by contradiction $H_2(N,\partial N;\Z)$ is zero, then one has the following short exact sequence
\begin{equation}
\big(H_{3}(N;\Z)=\!\big) 0\to H_3(N,\partial N;\Z)\to 0\big(\!=H_2(\partial N;\Z)\big)\,,
\end{equation}
which implies trivially that $H_3(N,\partial N;\Z)$ is zero, but this is not possible, as argued at the end of case~$\mathrm{(4)}$.
\end{proof}

We emphasize that if $N$ is a smooth manifold as considered in the preceding lemma and meets one of the previous equivalent conditions, then $N$ is always orientable. Moreover, if $\partial N\neq \emptyset$, then $\partial N$ is a $2$-sphere. 
This follows using the exact long sequence
\begin{equation}
\dots\to H_2(N;\Z)\to H_2(N,\partial N;\Z)\to H_{1}(\partial N;\Z)\to H_{1}(N;\Z)\to \dots\,,
\end{equation}
together with the characterization theorem of the closed, connected surfaces.

In the following lemma, we extend the previous result to the noncompact cases of our interest.

\begin{proposition}\label{noncompcase}
Let $M$ be a $3$--dimensional, noncompact, smooth manifold with or without connected, compact boundary $\partial M$.
Assume that there exists a compact subset $K\subseteq M$ such that $M\setminus K$ is diffeomorphic to $\R^{3}$ minus a closed ball.
Then, the first Betti number of $M$ is zero if and only if $H_{2}(M,\partial M;\Z)=0$.
\end{proposition}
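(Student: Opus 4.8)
The plan is to reduce the noncompact statement to the compact case already settled in Lemma~\ref{compcase} by \emph{capping off the single end} of $M$ with a three--ball. Since $M\setminus K$ is diffeomorphic to $\R^{3}$ minus a closed ball, I would fix a large coordinate sphere $S\cong\SSS^{2}$ in the end, so that $S$ separates $M$ into a compact region $M_{R}$ containing $K$ and $\partial M$ (with $\partial M_{R}=\partial M\sqcup S$) and an outer collar diffeomorphic to $\SSS^{2}\times[R,+\infty)$. Gluing a standard three--ball $B^{3}$ to $M_{R}$ along $S=\partial B^{3}$ (smoothing in the routine way) produces a compact smooth three--manifold $N$ with $\partial N=\partial M$, which is connected or empty exactly as $\partial M$ is; Lemma~\ref{compcase} then applies to $N$.

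Next I would record the homological comparison between $M$ and $N$. Retracting the outer collar onto $S$ gives a deformation retraction of $M$ onto $M_{R}$ that fixes $\partial M$, hence $H_{*}(M,\partial M;\Z)\cong H_{*}(M_{R},\partial M;\Z)$. By excision $H_{k}(N,M_{R};\Z)\cong H_{k}(B^{3},\SSS^{2};\Z)$, which is $\Z$ for $k=3$ and $0$ otherwise. From the long exact sequence of the pair $(N,M_{R})$ and the vanishing of $H_{1}(N,M_{R};\Z)$ and $H_{2}(N,M_{R};\Z)$ I get $H_{1}(M;\Z)\cong H_{1}(M_{R};\Z)\cong H_{1}(N;\Z)$, so that $b_{1}(M)=b_{1}(N)$. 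The long exact sequence of the triple $(N,M_{R},\partial M)$, together with $\partial N=\partial M$, yields the exact piece
\[ \Z\cong H_{3}(N,M_{R};\Z)\xrightarrow{\;\partial\;}H_{2}(M,\partial M;\Z)\longrightarrow H_{2}(N,\partial N;\Z)\longrightarrow 0, \]
where the connecting homomorphism $\partial$ sends the generator to the class $[S]$ of the capping sphere. In particular $H_{2}(N,\partial N;\Z)$ is the quotient of $H_{2}(M,\partial M;\Z)$ by $\langle[S]\rangle$.

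With these two facts the equivalence follows from Lemma~\ref{compcase} applied to $N$. For one implication, if $H_{2}(M,\partial M;\Z)=0$ then the surjection above forces $H_{2}(N,\partial N;\Z)=0$, whence $b_{1}(N)=0$ by Lemma~\ref{compcase} and therefore $b_{1}(M)=b_{1}(N)=0$. For the converse, if $b_{1}(M)=0$ then $b_{1}(N)=0$; by Lemma~\ref{compcase} and the remark following it this gives both $H_{2}(N,\partial N;\Z)=0$ and the orientability of $N$. The displayed sequence then shows that $H_{2}(M,\partial M;\Z)$ is cyclic, generated by $[S]$, and orientability lets me kill this class: the compact orientable manifold $M_{R}$ carries a fundamental class whose boundary is $[\partial M]+[S]$, so $[S]$ agrees in $H_{2}(M_{R},\partial M;\Z)\cong H_{2}(M,\partial M;\Z)$ with a chain supported in $\partial M$, i.e. $[S]=0$. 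Hence $H_{2}(M,\partial M;\Z)=0$.

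The delicate point is precisely this last step: the surjection $H_{2}(M,\partial M;\Z)\twoheadrightarrow H_{2}(N,\partial N;\Z)$ is automatic and handles one direction without any orientability input, but to rule out a spurious $\Z$ or $\Z/n$ coming from $[S]$ in the other direction one genuinely needs the fundamental class of $M_{R}$, which is available exactly because $b_{1}(N)=0$ already forces $N$ (hence $M_{R}$) to be orientable. The remaining points---that the collar retraction is rel $\partial M$, that the ball can be glued smoothly, and the excision identification of $H_{*}(N,M_{R};\Z)$---are routine and I would only sketch them.
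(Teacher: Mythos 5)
Your proof is correct, and while your compactification agrees with the paper's (capping the end with a ball produces exactly the compact manifold $N$ that the paper obtains by adding a point at infinity, so that $M\cong N\setminus\{P\}$ and your $M_R$ is a deformation retract of $M$), the decisive converse step is genuinely different. In the direction $b_1(M)=0\Rightarrow H_2(M,\partial M;\Z)=0$, the paper runs a rank count in its (relative) Mayer--Vietoris sequence to get $b_2(M,\partial M)=0$ and must then \emph{separately} exclude torsion in $H_2(M,\partial M;\Z)$, which it does via the universal coefficient theorem combined with duality for noncompact manifolds (yielding $H_3(M,\partial M;\Z)=0$ and $H_3(M,\partial M;\Z_{m})=0$). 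You instead read off from the long exact sequence of the triple $(N,M_R,\partial M)$, together with the excision identification $H_k(N,M_R;\Z)\cong H_k(B^3,\SSS^2;\Z)$, that $H_2(M,\partial M;\Z)$ is cyclic and generated by the class $[S]$ of the capping sphere, and then kill this generator outright using the fundamental class of the compact orientable piece $M_R$ (exactness of $H_3(M_R,\partial M_R;\Z)\to H_2(\partial M_R,\partial M;\Z)\to H_2(M_R,\partial M;\Z)$, with $\partial[M_R,\partial M_R]$ hitting $[S]$); the needed orientability is legitimately supplied by the remark after Lemma~\ref{compcase}, just as in the paper. This buys you an argument in which free part and torsion die simultaneously, with no appeal to noncompact duality or coefficient theorems, and which treats $\partial M=\emptyset$ and $\partial M\neq\emptyset$ uniformly, whereas the paper handles the two cases separately; the paper's route, in exchange, never needs an explicit generator and stays entirely inside exact sequences and standard dualities. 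Your first direction (the surjection $H_2(M,\partial M;\Z)\twoheadrightarrow H_2(N,\partial N;\Z)$ together with $H_1(M;\Z)\cong H_1(N;\Z)$) matches the paper's in substance.
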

\begin{proof}
We treat the cases $\partial M = \emptyset$ and $\partial M \neq \emptyset$ separately.
In the first case, i.e. $M$ is without boundary, as a consequence of the made assumption, firstly, $M$ is diffeomorphic to a $3$--dimensional, closed, smooth manifold $N$ minus its point $P$, and secondly, one has the following exact short sequence
\begin{equation}\label{apAeq2}
0\to H_{3}(N;\Z) \to \Z\to H_{2}(M;\Z)\to  H_{2}(N;\Z)\to 0\,,
\end{equation}
by Mayer--Vietoris. 
Thus, if $H_{2}(M;\Z)$ is zero, then $H_{2}(N;\Z)$ is zero too.
Consequently, we know by Lemma~\ref{compcase} that $b_{1}(N)=0$, which implies $b_{1}(M)=0$, since $H_{1}(M;\Z)$ and $H_{1}(N;\Z)$ are isomorphic groups.
This last statement is true because $M$ is diffeomorphic to $N\setminus\{P\}$.
Therefore, if $b_{1}(M)=0$, then $b_{1}(N)=0$ and again by Lemma~\ref{compcase}, one has $H_{2}(N;\Z)=0$.
Notice that $N$ is orientable, as  emphasized before, hence $H_{3}(N;\Z)=\Z$.
Using these last results in exact short sequence~\eqref{apAeq2}, one obtains before that $H_{2}(M;\Z)$ is a finitely generated Abelian group and after the equality
$$1=1+b_{2}(M)\,,$$
which implies $b_{2}(M)=0$. Then, the conclusion $H_{2}(M;\Z)=0$ follows, since $H_{2}(M;\Z)$ is torsion--free due to the fact that $M$ is noncompact.
Let us treat the second case, namely when $M$ is with boundary.
Again by virtue of the made assumption, $M$ is diffeomorphic to a $3$--dimensional, compact, smooth manifold $N$ having $\partial M$ as boundary, minus its point $P$. Moreover, one has the following exact short sequence
\begin{equation}\label{apAeq3}
0\to H_{3}(M,\partial M;\Z) \to H_{3}(N,\partial M;\Z)\to \Z\to H_{2}(M,\partial M;\Z)\to H_{2}(N,\partial M;\Z)\to 0\,,
\end{equation}
by relative Mayer--Vietoris. Therefore, if $H_{2}(M,\partial M;\Z)$ is zero, then $H_{2}(N,\partial M;\Z)$ is zero, which implies $b_{1}(N)=0$, by Lemma~\ref{compcase},
consequently, $b_{1}(M)=0$, since $H_{1}(M;\Z)$ and $H_{1}(N;\Z)$ are isomorphic groups.
Vice versa, due to this isomorphism, if $b_{1}(M)=0$, also $b_{1}(N)=0$, from which it follows that $N$ is orientable, $\partial M$ is a $2$--sphere and $H_{2}(N,\partial M;\Z)=0$, by Lemma~\ref{compcase}.
The orientability of $N$ ensures that $H_{3}(N,\partial M;\Z)=\Z$ and also the orientability of $M$, from which one obtains $H_{3}(M,\partial M;\Z)=0$, by the duality for noncompact manifolds.
Using all this information in exact short sequence~\eqref{apAeq3}, similarly to the case $\partial M=\emptyset$, one obtains $b_{2}(M,\partial M)=0$.
Then, the statement $H_{2}(M,\partial M;\Z)=0$ will follow once we show that $H_{2}(M,\partial M;\Z)$ is torsion--free.
We suppose, by contradiction, that there exists a nontrivial element of order $m$.
As a result, $\mathrm{Tor}\big(H_2(M,\partial M;\Z),\Z_{m}\big)$ is nonzero, but 
this is impossible since $H_3(M,\partial M; \Z_{m})$ and $\mathrm{Tor}\big(H_2(M,\partial M;\Z),\Z_{m}\big)$ are isomorphic, as a consequence of  the universal coefficient theorem for homology together with the equality $H_{3}(M,\partial M;\Z)=0$, and $H_3(M,\partial M; \Z_{m})=0$, thanks to the duality for noncompact manifolds.
\end{proof}

We point out that, if $M$ is a (smooth) manifold satisfying the assumptions of Proposition~\ref{noncompcase} and one of the previous equivalent conditions, then {\em $M$ is always orientable and $\partial M$ is a $2$--sphere if the boundary is present}.

We are now ready to present an alternative proof, with respect to the one presented in~\cite[at the end of Subsection~1.3]{AMMO}, of the connectedness of the regular level sets of $u\in C^{\infty}(M)$ solution of Dirichlet problem~\eqref{f1prelc4} in a $3$--dimensional, complete, one--ended asymptotically flat manifold with compact, connected boundary and having $H_2(M,\partial M;\Z)=0$. It is inspired by~\cite[Lemma~4.46]{DanLee} and~\cite[Lemma~2.3]{MuntWang}.

\begin{proposition}\label{conn.liv.reg}
Let $(M,g)$ be a $3$--dimensional, complete, one--ended asymptotically flat manifold with compact, connected boundary. 
Let $u\in C^{\infty}(M)$ be the solution of Dirichlet problem~\eqref{f1prelc4}. Assume that $H_2(M,\partial M;\Z)=0$, then, all regular level sets of $u$ are connected.
\end{proposition}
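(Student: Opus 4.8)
The plan is to reduce the statement to the vanishing of the first Betti number and then run a Mayer--Vietoris counting argument. By Proposition~\ref{noncompcase}, the hypothesis $H_{2}(M,\partial M;\Z)=0$ is equivalent to $b_{1}(M)=0$; hence it suffices to prove that if some regular level set of $u$ were disconnected, then $b_{1}(M)$ would be strictly positive. So I would fix a regular value $s\in(0,1)$ (the value $s=0$ giving $\partial M$, which is connected by hypothesis) and set $\Sigma_{s}=\{u=s\}$, a smooth, compact, embedded surface --- compactness coming from the properness of $u$ recalled in Section~\ref{settmasscapacity} --- which separates $M$ into the open sets $\{u<s\}$ and $\{u>s\}$. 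Suppose, for contradiction, that $\Sigma_{s}$ has $n\geq 2$ connected components.

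First I would show that both $\{u<s\}$ and $\{u>s\}$ are connected, using the maximum principle together with the properness of $u$ and the fact that $M$ has a single end. For the superlevel set: any bounded connected component $W$ of $\{u>s\}$ would satisfy $\partial W\subseteq\Sigma_{s}$ (since $u=0<s$ forces $\{u>s\}$ to avoid a neighbourhood of $\partial M$), so $u\equiv s$ on $\partial W$ while $u>s$ inside, producing an interior maximum of the harmonic, nonconstant function $u$ --- impossible. Thus every component of $\{u>s\}$ is unbounded; since, by formula~\eqref{hexpu}, the end $\{\vert x\vert>R\}$ is connected and contained in $\{u>s\}$ for $R$ large, and every unbounded component must meet it, there is exactly one. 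Symmetrically, by the minimum principle any component of $\{u<s\}$ not touching $\partial M$ would produce an interior minimum of $u$; as $\{u<s\}$ is precompact (again properness) and $\partial M$ is connected, hence contained in the closure of a single component, the set $\{u<s\}$ is connected as well.

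Then I would invoke a reduced Mayer--Vietoris argument. Choosing $\varepsilon>0$ so small that $[s-\varepsilon,s+\varepsilon]$ contains no critical value of $u$ (possible since the regular values form an open set, Section~\ref{settmasscapacity}), I set $A=\{u<s+\varepsilon\}$ and $B=\{u>s-\varepsilon\}$, so that $A\cup B=M$, both $A$ and $B$ are connected by the previous step, and $A\cap B=\{s-\varepsilon<u<s+\varepsilon\}$ deformation retracts onto $\Sigma_{s}$, hence has exactly $n$ components. The reduced Mayer--Vietoris sequence
\begin{equation}
\widetilde{H}_{1}(M;\Z)\xrightarrow{\ \partial\ }\widetilde{H}_{0}(A\cap B;\Z)\longrightarrow \widetilde{H}_{0}(A;\Z)\oplus\widetilde{H}_{0}(B;\Z)
\end{equation}
has vanishing right-hand term (as $A,B$ are connected), so $\partial$ is surjective onto $\widetilde{H}_{0}(A\cap B;\Z)\cong\Z^{\,n-1}$. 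Therefore $\operatorname{rank}H_{1}(M;\Z)\geq n-1\geq 1$, i.e. $b_{1}(M)>0$, contradicting $b_{1}(M)=0$. Hence $n=1$ and every regular level set of $u$ is connected.

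The routine part is the final Mayer--Vietoris count; the step requiring the most care is the connectedness of the two sides, where the noncompactness of $M$ genuinely enters. The maximum and minimum principles only rule out \emph{bounded} spurious components, and closing the argument for the superlevel set truly relies on the one-endedness of $M$ (through the asymptotics~\eqref{hexpu}) to force all the unbounded components to coalesce into one; this is the point I expect to be the main obstacle to write rigorously.
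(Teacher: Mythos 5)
Your proposal is correct and follows essentially the same route as the paper's proof: connectedness of the sub-- and superlevel sets via the maximum principle together with one--endedness, then a reduced Mayer--Vietoris sequence combined with Proposition~\ref{noncompcase} to exclude a disconnected neck. The only (immaterial) variations are that you use the open cover $\{u<s+\varepsilon\}$, $\{u>s-\varepsilon\}$ instead of the paper's closed pieces $\{0\leq u\leq t+\varepsilon\}$, $\{u\geq t\}$, and you conclude via the rank estimate from the surjection onto $\Z^{\,n-1}$, whereas the paper observes that the image of the torsion group $H_{1}(M;\Z)$ in the torsion--free group $\widetilde{H}_{0}$ must vanish --- the same point phrased contrapositively.
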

\begin{proof}
Let $t\in (0,1)$ be a regular value of $u$. It is obvious that $\{u\geq t\}=\overline{\{u>t\}}$ and $\{0\leq u\leq t\}=\overline{\{0<u<t\}}$, we want to see that they are connected.
First, we show the connectedness of $\{0 \leq u \leq t\} $. Supposing it is not connected, it must have a connected, compact component $K$ disjoint from $\partial M$. 
Then, $\partial K\subseteq \{u=t\}$ and, since $\{0\leq u\leq t\}=\overline{\{0<u<t\}}$, the interior of $K$ must be nonempty and contain some points where $0<u<t$, which is not possible, by the maximum principle.
On the other side, similarly, if $\{u \geq t\}$ is not connected, it must have a connected, compact component $K$, because there exists a compact set $\widetilde{K}$ of $M$ such that $M\setminus\widetilde{K}$ is contained in $\{u \geq t\}$, as a consequence of $u\to 1$ at $\infty$. Then, $\partial K\subseteq \{u=t\}$ and, since $\{u\geq t\}=\overline{\{u>t\}}$, the interior of $K$ must be nonempty and contain some points where $u>t$, which is not possible, by the maximum principle.
Hence, $\{u\geq t\}$ and $\{u\leq t\}$ are connected.
Let now $\varepsilon>0$ such that $[t-\varepsilon,t+\varepsilon]$ doesn't contain critical values of $u$, we consider the reduced Mayer--Vietoris exact sequence of the pair $\{0\leq u\leq t+\varepsilon\}$ and $\{u\geq t\}$,
\begin{equation*}
\dots\to H_{1}(M;\Z)\to\widetilde{H}_{0}\left(\{t\leq u\leq t+\varepsilon\};\Z\right)\to\widetilde{H}_{0}\left(\{0\leq u\leq t+\varepsilon\};\Z\right)\oplus\widetilde{H}_{0}\left(\{u\geq t\};\Z\right)\to \dotsi \,.
\end{equation*}
Then, as a consequence of the connectedness of the sets $\{0\leq u\leq t+\varepsilon\}$ and $\{u\geq t\}$, the last space, $\widetilde{H}_{0}\left(\{0\leq u\leq t+\varepsilon\};\Z\right)\oplus\widetilde{H}_{0}\left(\{u\geq t\};\Z\right)$, is trivial, therefore, $\widetilde{H}_{0}\left(\{t\leq u\leq t+\varepsilon\};\Z\right)$ is the image of ${H}_{1}(M;\Z)$, but this image is trivial. Indeed, the assumption $H_2(M,\partial M;\Z)=0$ implies that the first Betti number of $M$ is zero, by Proposition~\ref{noncompcase}, hence ${H}_{1}(M;\Z)$ coincides with its torsion subgroup, while $\widetilde{H}_{0}\left(\{t\leq u\leq t+\varepsilon\};\Z\right)$ is torsion--free (since ${H}_{0}(X;\Z)$ is isomorphic to $\widetilde{H}_{0}(X;\Z)\oplus\Z$ and ${H}_{0}(X;\Z)$ is isomorphic to a direct sum of $\Z$'s, one for each path--connected component of any topological space $X$).
Thus, $\widetilde{H}_{0}\left(\{t\leq u\leq t+\varepsilon\};\Z\right)=0$ and, consequently, $\{t\leq u\leq t+\varepsilon\}$ is connected, but, being $\{t\leq u\leq t+\varepsilon\}$ diffeomorphic to $\{u=t\}\times[t,t+\varepsilon]$, the number of the connected components of $\{t\leq u\leq t+\varepsilon\}$ and $\{u=t\}$ is the same.
\end{proof}

\bibliographystyle{amsplain}
\bibliography{biblio}
 
\end{document}